\newcommand\ddt[1]{\left .\frac{\d}{\d t}\right |_{#1}}
\theoremstyle{definition}
\newtheorem{dfn}{Definition}[section]
\theoremstyle{plain}
\newtheorem{thm}{Theorem}
\newtheorem*{thm*}{Theorem}
\newtheorem{prop}[dfn]{Proposition}
\newtheorem*{prop*}{Proposition}
\newtheorem{lem}[dfn]{Lemma}
\newtheorem*{lem*}{Lemma}
\theoremstyle{remark}
\newtheorem{rem}[dfn]{Remark}
\newcommand\chech[1]{\mathaccent"7014{#1}}
\newcommand{\R}{\mathbf{R}}
\newcommand{\N}{\mathbf{N}}
\newcommand{\g}{\mathfrak{g}}  
\newcommand{\h}{\mathfrak{h}}  
\newcommand{\s}{\mathfrak{s}}  
\renewcommand{\t}{\mathfrak{t}}
\newcommand{\p}{\mathfrak{p}}
\renewcommand{\L}{\mathcal{L}}
\newcommand{\e}{\mathrm{e}}
\renewcommand{\d}{\mathrm{d}}
\DeclareMathOperator{\GL}{GL}
\DeclareMathOperator{\Diff}{Diff}
\DeclareMathOperator{\Hom}{Hom}
\DeclareMathOperator{\Isom}{Isom}
\DeclareMathOperator{\Ad}{Ad}  
\DeclareMathOperator{\Stab}{Stab}
\DeclareMathOperator{\Ker}{Ker}
\DeclareMathOperator{\Lie}{Lie}
\DeclareMathOperator{\Vect}{Span}
\DeclareMathOperator{\Aut}{Aut}
\DeclareMathOperator{\id}{id}
\DeclareMathOperator{\Kill}{Kill}
\DeclareMathOperator{\Int}{Int}
\newcommand{\X}{\mathbf{X}}
\newcommand{\Ein}{\mathbf{Ein}}
\renewcommand{\epsilon}{\varepsilon}
\renewcommand{\geq}{\geqslant}
\renewcommand{\leq}{\leqslant}
\renewcommand{\hat}{\widehat}  
\newcommand{\hx}{\hat{x}}
\renewcommand{\tilde}{\widetilde}
\renewcommand{\bar}{\overline}
\author{Vincent Pecastaing}
\title{On two theorems about local automorphisms of geometric structures}
\begin{document}

\maketitle

\begin{abstract}
This article investigates a few questions about orbits of local automorphisms in manifolds endowed with rigid geometric structures. We give sufficient conditions for local homogeneity in a broad class of such structures, namely Cartan geometries, extending a classical result of Singer about locally homogeneous Riemannian manifolds. We also revisit a strong result of Gromov which describes the structure of the orbits of local automorphisms of manifolds endowed with $A$-rigid structures, and give a statement of this result in the setting of Cartan geometries.
\end{abstract}

\tableofcontents

\section{Introduction}
\label{s:intro}
The aim of this article is to revisit two classical results on local orbits of rigid geometric structures : Singer's homogeneity theorem for Riemannian manifolds and Gromov's result on $Is^{loc}$-orbits in a manifold endowed with a rigid geometric structure. We present a new generalization to Cartan geometries of Singer's theorem and an alternative and elementary approach of Gromov's theorem in the same geometric context. These two results are based on the study of the behaviour of the curvature and its derivatives up to some \textit{fixed} finite order.

\subsection{Around Singer's theorem}

Singer defined an infinitesimally homogeneous Riemannian manifold of order $k$ as a Riemannian manifold such that for every two points $x,y$, there exists a linear isometry sending $(\nabla^m R)_x$ to $(\nabla^m R)_y$ for all $m\leq k$, where $\nabla^m R$ denotes the $m$-th covariant derivative of the Riemannian curvature tensor $R$ with respect to the Levi-Civita connection $\nabla$. He proved the following result.

\begin{thm*}[\cite{singer}]
Let $(M^n,g)$ be a Riemannian manifold. There exists an integer $k_M \leq \frac{n(n-1)}{2}$ such that if $(M,g)$ is infinitesimally homogeneous of order $k_M$, then it is locally homogeneous.
\end{thm*}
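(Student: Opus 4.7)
The plan is to work on the orthonormal frame bundle $\pi : O(M) \to M$, a principal $O(n)$-bundle carrying the Levi-Civita connection form $\omega$ and the canonical $\R^n$-valued soldering form $\theta$. For each $m \geq 0$, the covariant derivative $\nabla^m R$ is encoded as an $O(n)$-equivariant map $\kappa^m : O(M) \to V_m$ into a suitable tensor representation $V_m$ of $O(n)$; set $\kappa^{(k)} = (\kappa^0,\ldots,\kappa^k) : O(M) \to W_k := V_0 \oplus \cdots \oplus V_k$. Infinitesimal homogeneity of order $k$ then says that $\kappa^{(k)}(O(M))$ lies in a single $O(n)$-orbit of $W_k$.

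First I would introduce the \emph{Singer invariants}. For $\hx \in O(M)$, let $H_m(\hx) \subset O(n)$ be the stabilizer of $\kappa^{(m)}(\hx)$. One has a decreasing chain $H_0 \supseteq H_1 \supseteq \cdots$, so their dimensions form a non-increasing sequence of non-negative integers bounded above by $\dim O(n) = n(n-1)/2$. Hence there exists $k_M \leq n(n-1)/2$ such that $\dim H_{k_M+1}(\hx) = \dim H_{k_M}(\hx)$. Under infinitesimal homogeneity of order $k_M$, the groups $H_m(\hx)$ are all conjugate to a fixed $H := H_{k_M}(\hx_0)$, and the fiber $F := (\kappa^{(k_M)})^{-1}(w_0)$ through a chosen basepoint $\hx_0$ is a smooth submanifold of dimension $n + \dim H$ on which $(\theta,\omega)$ restricts to an absolute parallelism.

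The central step is to upgrade infinitesimal homogeneity of order $k_M$ to infinitesimal homogeneity of all orders. The key observation is that stabilization at level $k_M$, together with the structure equations $\d\theta = -\omega \wedge \theta$ and $\d\omega = -\omega \wedge \omega + \Omega$, forces $\nabla^{k_M+1} R$ to be determined, up to $O(n)$, by $\kappa^{(k_M)}$: indeed, differentiating $\kappa^{(k_M)}$ along $F$ produces algebraic relations in which the would-be ``new'' directions of variation are already accounted for by the stabilizer $H$. An induction then shows that the same holds at every higher order. I would then conclude as in Singer's original approach: the absolute parallelism on $F$ has all of its structure functions constant along $F$, hence the constant-coefficient Maurer--Cartan equations integrate locally to a Lie group of transformations of $F$ commuting with the right $H$-action; this descends to a transitive family of local isometries on $M$.

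The main obstacle is the propagation step: turning the dimensional equality $\dim H_{k_M+1} = \dim H_{k_M}$ into the actual equality of $O(n)$-orbits of $\kappa^{(k_M+1)}$ and then of all $\kappa^{(m)}$. This requires identifying precisely which derivatives of $\kappa^{(k_M)}$ along $F$ are independent, and controlling them via the structure equations rather than by hand. Once this is done, integrating the resulting parallelism to produce actual local Killing fields is standard; the difficulty lies entirely in the rigidity estimate that closes the induction. This is the point at which the Cartan-geometric reformulation pursued in the paper is expected to pay off, since it replaces these ad hoc structure-equation manipulations by a uniform statement about Cartan connections.
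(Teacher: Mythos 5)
Your framework is the right one and in fact coincides, after translation, with the paper's proof of the generalized statement (Theorem \ref{thm:singer}): the orthonormal frame bundle is exactly the Cartan bundle for the Euclidean model $\X=(O(n)\ltimes\R^n)/O(n)$, your chain of stabilizers $H_0\supseteq H_1\supseteq\cdots$ is the chain $S_0\supseteq S_1\supseteq\cdots$ whose Lie algebras define the Singer integer, the bound $k_M\leq n(n-1)/2=\dim O(n)$ is the bound $k\leq\dim P$, and your fiber $F$ is the subbundle $\hat N$. But the step you yourself flag as ``the main obstacle'' --- upgrading infinitesimal homogeneity of order $k_M$ to all orders by an induction on derivatives controlled by the structure equations --- is left entirely open, and it is precisely the content of the theorem: without it your final paragraph has nothing to integrate, since the claim that ``the absolute parallelism on $F$ has all of its structure functions constant along $F$'' is asserted, not proved. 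As written, the proposal identifies the correct invariant and the correct submanifold but does not close the argument.

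The paper's proof shows that the induction you are worried about can be bypassed. Two observations suffice. First (Lemma \ref{lem:level_set}), the equality $\dim H_{k_M+1}=\dim H_{k_M}$ forces the level sets of $\kappa^{(k_M)}$ and $\kappa^{(k_M+1)}$ through $\hx_0$ to coincide (both are subbundles of the same dimension, one contained in the other), so $\kappa^{(k_M+1)}$ --- hence the full first derivative of $\kappa^{(k_M)}$ --- is \emph{constant} on $F$; this is what makes $F$ a parallel submanifold, i.e.\ makes $(\theta,\omega)(T_{\hx}F)$ a fixed subspace $\h$. Second (Proposition \ref{prop:constant_curvature}), once $F$ is parallel and the \emph{curvature alone} is constant on $F$, the identity $\omega_{\hx}([\tilde X,\tilde Y])=[X,Y]-K_0(X,Y)$ shows that the $(\h,\omega)$-constant fields on $F$ close into a finite-dimensional Lie algebra with constant structure constants; Palais' integration theorem then produces a locally transitive algebra of local Killing fields. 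No control of $\nabla^{m}R$ for $m>k_M+1$ is ever needed --- the constancy of all higher derivatives is a consequence of local homogeneity, not an input. If you want to complete your proposal along your own lines (Singer's original route via propagation to all orders), you must actually carry out the prolongation argument you sketch; the cleaner fix is to replace that induction by the two observations above.
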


If the manifold was locally homogeneous, then the existence of a local isometry sending $x$ to $y$ would imply infinitesimal homogeneity at \textit{any} order $k$. Thus, Singer not only proved that the converse holds but that infinitesimal homogeneity of order at most $\frac{n(n-1)}{2}$ is enough to ensure local homogeneity.

Following Singer, several generalizations of this theorem to other geometric structures have been proved by various people. In \cite{podesta}, Podesta and Spiro extended Singer's result to pseudo-Riemannian manifolds. Then, Opozda proposed a version of Singer's theorem for analytic affine connections (\cite{opozda1}) and also for $G$-structures endowed with a connection (\cite{opozda2}). However, her results do not exhibit an analogous bound on the order of derivatives in \cite{opozda1} and one has to check some additional algebraic condition in \cite{opozda2}. In a recent paper (\cite{gilkey}), P. Gilkey and al. gave a formulation in the setting of homothety homogeneity.

In the present paper, we propose a generalization of Singer's theorem to manifolds endowed with Cartan geometries. A Cartan geometry on a manifold is a rigid geometric structure, in the sense that everywhere, its infinitesimal automorphisms form a finite dimensional Lie algebra. This family of geometric structures includes most classical structures, such as pseudo-Riemannian metrics, conformal structures in dimension greater than $2$, projective structures or affine connections. Section \ref{s:background} presents some basic definitions and properties for the reader not familiar with Cartan geometries.

In a Cartan geometry, there exists a notion of curvature and of derivatives of the curvature. Formally, a Singer-like result in this context would be to associate to each type of Cartan geometry a bound $k_0$ such that if the curvature and all its derivatives up to order $k_0$ are somehow constant, then the geometry is locally homogeneous (see Definitions \ref{dfn:local_automorphism} and \ref{dfn:infinitesimal_homogeneity} for local and infinitesimal homogeneity). This is the content of our first main result : 

\begin{thm}
Let $(M,\mathcal{C})$ be a Cartan geometry on a connected manifold $M$, modeled on a homogeneous space $\X = G/P$. Set $d = \dim P$. If the geometry is $d$-infinitesimally homogeneous, then it is locally homogeneous.
\label{thm:singer}
\end{thm}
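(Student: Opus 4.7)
The plan is to adapt Singer's classical strategy to the Cartan-geometric setting. Using the Cartan connection $\omega$, I would differentiate the curvature $\kappa$ along the horizontal parallelism associated to $\omega$ to produce, at each order $k \geq 0$, a $P$-equivariant map $\kappa^{(\leq k)} \colon \hat M \to W_0 \oplus \cdots \oplus W_k$ into a finite-dimensional $P$-module encoding the $k$-jet of the curvature. A local automorphism of $(M,\mathcal{C})$ corresponds to a local diffeomorphism of $\hat M$ preserving $\omega$, and such a lift is determined by its value at a single point. The task therefore reduces to showing that for every pair $\hat x, \hat y \in \hat M$ one can adjust $\hat y$ within its $P$-fiber so that the full infinite jet of curvature agrees at $\hat x$ and the adjusted point; a Frobenius-type argument on the diagonal distribution on $\hat M \times \hat M$ then integrates this equality into the desired local automorphism.

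For each $\hat x \in \hat M$ and $k \geq 0$, let $I^k(\hat x) \subseteq P$ denote the stabilizer of $\kappa^{(\leq k)}(\hat x)$; the inclusions $I^0(\hat x) \supseteq I^1(\hat x) \supseteq \cdots$ form a decreasing chain in $P$. The hypothesis of $d$-infinitesimal homogeneity forces the $k$-jets with $k \leq d$ to lie in single $P$-orbits, so the dimensions $d_k(\hat x) := \dim I^k(\hat x)$ are constant on $\hat M$ for these $k$. Since these dimensions live in $\{0, 1, \ldots, d\}$ and form a non-increasing sequence, a pigeonhole argument yields a uniform stabilization index $k_0 \leq d$ at which $\Lie(I^{k_0}(\hat x)) = \Lie(I^{k_0+1}(\hat x))$ for every $\hat x$, the common dimension also being constant on $\hat M$.

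The heart of the proof is to propagate this stabilization to all orders, that is, to show $\Lie(I^k(\hat x)) = \Lie(I^{k_0}(\hat x))$ for every $k \geq k_0$. This ought to follow from a Leibniz-type identity relating the $P$-action on $W_{k+1}$ to its action on $W_k$ through the structure equations of $\omega$: an infinitesimal element $A \in \p$ annihilating $\kappa^{(\leq k_0+1)}(\hat x)$ then annihilates $\kappa^{(k)}(\hat x)$ for every $k \geq k_0$, by induction on $k$ using the commutator between $A$ and covariant differentiation along horizontal fields. Given this propagation, the conclusion is immediate: for arbitrary $\hat x, \hat y \in \hat M$, $d$-infinitesimal homogeneity produces $p \in P$ with $\kappa^{(\leq d)}(\hat x) = \kappa^{(\leq d)}(\hat y \cdot p)$; the propagation ensures that all higher-order jets of curvature also agree at these points, and the Frobenius argument outlined in the first paragraph yields a local automorphism sending $\hat x$ to $\hat y \cdot p$, which descends to a local automorphism of $M$ sending $\pi(\hat x)$ to $\pi(\hat y)$.

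The principal obstacle is the propagation of stabilization. It requires a careful analysis of the $P$-module structure on the successive spaces $W_k$ and its compatibility with the covariant differentiation built from $\omega$, so that an infinitesimal isotropy at order $k_0$ automatically becomes an isotropy at every higher order. The remaining steps rely only on standard features of Cartan geometries: the horizontal parallelism provided by $\omega$, the $P$-equivariance of the jet-curvatures, and the integrability of the diagonal distribution whenever all curvature jets agree at two points of $\hat M$.
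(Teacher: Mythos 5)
Your setup---the equivariant jet maps $\mathcal{D}^k K$, the decreasing chain of isotropy algebras inside $\p$, and the pigeonhole bound $k_0 \leq d = \dim P$---matches the beginning of the paper's proof, but the two steps you defer are exactly where your route breaks down, and they are not how the paper proceeds. First, the ``propagation'' step as stated is a non sequitur: knowing that $\Lie(I^{k}(\hx)) = \Lie(I^{k_0}(\hx))$ for all $k \geq k_0$ is a statement about the isotropy of the jet under the $P$-action \emph{at a single point}, and it does not imply that two points whose $d$-jets agree have agreeing jets of all orders. What the stabilization actually buys (Lemma \ref{lem:level_set} in the paper) is that the common level set $\hat N$ of $\mathcal{D}^{k_0}K$ and $\mathcal{D}^{k_0+1}K$ through a point is a \emph{parallel} submanifold, i.e.\ $\omega(T_{\hx}\hat N) = \h$ is a fixed subspace of $\g$. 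An induction on the order cannot propagate constancy of the jets along $\hat N$, because $\mathcal{D}^{k_0+2}K$ involves derivatives of $\mathcal{D}^{k_0+1}K$ in directions \emph{transverse} to $\hat N$, about which $d$-infinitesimal homogeneity says nothing. Second, even granting infinite-jet agreement, the ``Frobenius argument on the diagonal distribution'' is not a standard feature you may invoke: in the smooth category the integration of jet data into a local automorphism is essentially Theorem \ref{thm:frobenius} of this paper (Melnick's integrability theorem), it is controlled by the stabilization of subspaces of $\g$ rather than subalgebras of $\p$, and hence requires derivatives up to order $\dim G + 1$. Following that route gives the weaker statement of the paper's final subsection ($(m+1)$-infinitesimal homogeneity with $m = \dim G$), not the bound $d = \dim P$ claimed in the theorem.

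The paper avoids both difficulties by a direct construction on $\hat N$. Since $K$ is constant on the parallel subbundle $\hat N$, the structure equation $K(\hx)(X,Y) = [X,Y] - \omega_{\hx}([\tilde X,\tilde Y])$ shows that the $(\h,\omega)$-constant fields on $\hat N$ close under bracket, giving a finite-dimensional Lie algebra $\h'$ with bracket $[X,Y]' = [X,Y] - K_0(X,Y)$. Palais' integration theorem turns this infinitesimal action into a local $H'$-action; the push-forwards of left-invariant fields commute with the parallelism of $\hat N$ and span its tangent spaces, and since $\hat N$ is a principal subbundle over all of $M$ these fields extend $P$-equivariantly to lifts of local Killing fields of $(M,\mathcal{C})$ spanning $T_xM$ everywhere (Proposition \ref{prop:constant_curvature}). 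To repair your outline you would either have to supply this construction in place of the propagation-plus-Frobenius step, or accept the order $\dim G + 1$ and derive the result from Theorems \ref{thm:frobenius} and \ref{thm:isloc_orbit} as the paper does in its last subsection.
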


In fact, if $(M,\mathcal{C})$ is modeled on $G/P$, then $\dim M = \dim G/P$. Therefore, the integer $d$ only depends on $\dim M$ and the \textit{model space} $\X$ of the Cartan geometry we are dealing with. 

As we shall later see, in the general case, our definition of infinitesimal homogeneity cannot be directly interpreted on the manifold, but on some principal fiber-bundle over $M$. However, Theorem \ref{thm:singer} completely contains Singer's original theorem and its generalization to pseudo-Riemannian metrics essentially because the curvature of the Cartan geometry associated to a metric canonically corresponds to the Riemann curvature tensor of the metric. In fact, as soon as the model $G/P$ is reductive, infinitesimal homogeneity in the sense of Definition \ref{dfn:infinitesimal_homogeneity} is a tensorial condition that can be read on the manifold $M$ (see \cite{sharpe} p.197). For instance, if $(M^n,\nabla)$ is an affine manifold, we get the expected result : in that case, Theorem \ref{thm:singer} implies that if for every two points $x,y$ in $M$, and every $s \leq n^2$, there exists a linear transformation $f : T_xM \rightarrow T_y M$ such that $f^*(\nabla^s R^{\nabla})_y = (\nabla^s R^{\nabla})_x$, then $(M,\nabla)$ is locally homogeneous.

In addition to being a formulation of Singer's result in a larger geometric context, Theorem \ref{thm:singer} gives a short simple proof of the following result, adapted to Cartan geometries.

\begin{thm*}[Dense orbit theorem, \cite{gromov}, 3.3.A]
\label{thm:dense_orbit}
Let $(M,\mathcal{C})$ be a Cartan geometry modeled on $\X=G/P$. Assume the adjoint action of $P$ on $\g$ algebraic. If the pseudo-group $\Aut^{loc}(M)$ admits a dense orbit, then there exists an open dense subset of $M$ which is locally homogeneous.
\end{thm*}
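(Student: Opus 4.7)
The strategy is to find an open dense subset of $M$ on which the hypothesis of Theorem \ref{thm:singer} is satisfied, and then to apply that result. Let $\pi\colon\hat{M}\to M$ denote the Cartan bundle. The curvature and its covariant derivatives up to order $d = \dim P$ assemble into a smooth $P$-equivariant map
\[
\kappa^{(d)}\colon \hat{M}\longrightarrow W,
\]
where $W$ is a finite-dimensional vector space built by tensor operations from $\g$, $\g/\p$ and their duals. The algebraicity of $\Ad\colon P\to\GL(\g)$ translates into algebraicity of the induced representation of $P$ on $W$, so in particular every $P$-orbit in $W$ is locally closed.

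Let $x_0\in M$ have an $\Aut^{loc}(M)$-orbit $\Omega$ dense in $M$, and fix $\hx_0\in\pi^{-1}(x_0)$. Any local automorphism of $(M,\mathcal{C})$ lifts canonically to a bundle automorphism of $\hat{M}$ preserving the Cartan connection, hence preserving curvature and all its derivatives. Consequently, for every $x\in\Omega$ and every lift $\hx\in\pi^{-1}(x)$, the value $\kappa^{(d)}(\hx)$ lies in the single $P$-orbit $\mathcal{O} := P\cdot\kappa^{(d)}(\hx_0)$. By the remark above $\mathcal{O}$ is open in its Zariski closure $\bar{\mathcal{O}}$; since $\bar{\mathcal{O}}$ is also closed for the usual topology on $W$, continuity of $\kappa^{(d)}$ and density of $\Omega$ force $\kappa^{(d)}(\hat{M})\subset\bar{\mathcal{O}}$.

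Set $U := \{x\in M : \kappa^{(d)}(\hx)\in\mathcal{O}\ \text{for some, equivalently every, lift}\ \hx\}$. As $\mathcal{O}$ is open in $\bar{\mathcal{O}}$, the set $(\kappa^{(d)})^{-1}(\mathcal{O})$ is open in $\hat{M}$, so $U=\pi((\kappa^{(d)})^{-1}(\mathcal{O}))$ is open in $M$; since it contains $\Omega$, it is dense. Now for any two points $x,y\in U$ with lifts $\hx,\hat{y}$, the values $\kappa^{(d)}(\hx)$ and $\kappa^{(d)}(\hat{y})$ belong to the same $P$-orbit $\mathcal{O}$, which is exactly the statement that $(U,\mathcal{C}|_U)$ is $d$-infinitesimally homogeneous. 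Applying Theorem \ref{thm:singer} to each connected component of $U$ therefore yields the desired local homogeneity on an open dense subset.

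The heart of the argument is the local closedness of $\mathcal{O}$, for which the algebraicity hypothesis is indispensable: without it, an abstract $P$-orbit may fail to be open in its closure (as for an irrational translation flow on a torus), and the step producing the open dense set $U$ collapses. Everything else reduces to the functorial fact that local automorphisms act on curvature jets and to the strength of Theorem \ref{thm:singer}, which converts uniform behaviour of $\kappa^{(d)}$ into genuine local homogeneity.
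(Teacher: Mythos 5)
Your proof is correct and follows essentially the same route as the paper: pass to $\mathcal{D}^d K$, use algebraicity of the $P$-action on $\mathcal{W}^d$ to see that the relevant $P$-orbit is open in its closure, pull back to obtain an open dense set $U$ on which the geometry is $d$-infinitesimally homogeneous, and invoke Theorem \ref{thm:singer}. The one small point the paper adds at the end is that, since Theorem \ref{thm:singer} only applies component by component, one still uses the density of the original $\Aut^{loc}$-orbit to connect points lying in different connected components of $U$ and conclude that $U$ itself is a single $\Aut^{loc}$-orbit, i.e.\ locally homogeneous in the sense of the paper's definition.
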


Usually, this theorem is presented as a corollary of a more precise result which describes almost everywhere the structure of $\Aut^{loc}$-orbits. It is the object of the second half of this paper.

\subsection{Gromov's theorem}

The theorem of Gromov we are interested in says that given a manifold $M$ endowed with an $A$-rigid geometric structure, there exists an open dense set of $M$ on which the orbits of local isometries are closed submanifolds.

\begin{thm*}[\cite{gromov}]
Let $(M,\phi)$ be a manifold endowed with a rigid geometric structure of algebraic type $\phi$. Then, there exists an open dense subset on which orbits of local isometries of $\phi$ are closed submanifolds.
\end{thm*}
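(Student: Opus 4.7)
The strategy is to transfer all of the geometric information to the Cartan bundle $\hat M$ and to combine an algebraic separation theorem with Theorem \ref{thm:singer}. Set $d=\dim P$ and form the $P$-equivariant $d$-th order curvature map $\kappa^{(d)}:\hat M\to W$, where $W$ is the direct sum of the $P$-modules hosting the first $d$ covariant derivatives of the curvature $\kappa$ of $\mathcal C$. By the algebraic-type hypothesis, the $P$-action on $W$ is algebraic, and any element of $\Aut^{loc}(M)$ lifts to a local automorphism of $\hat M$ commuting with $P$ and preserving $\kappa^{(d)}$.

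By Rosenlicht's theorem on algebraic actions, there exists a $P$-invariant Zariski-open dense subset $\Omega\subset W$ together with a $P$-invariant regular map $q:\Omega\to Y$ whose fibres are exactly the $P$-orbits in $\Omega$. Let $\hat V=(\kappa^{(d)})^{-1}(\Omega)$ and $V=\pi(\hat V)\subset M$; both are open and dense, and $\hat V$ is $P$-invariant. Restricting to a further open dense subset $V'\subset V$ on which $q\circ \kappa^{(d)}$ has locally constant rank, one obtains a smooth foliation $\mathcal F$ of $V'$ with closed leaves, described as the projections of the level sets of $q\circ\kappa^{(d)}$ in $\pi^{-1}(V')$.

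Any element of $\Aut^{loc}(M)$ preserves $q\circ\kappa^{(d)}$ and thus carries leaves of $\mathcal F$ to leaves. Conversely, if $\hat x,\hat y\in\hat V$ project to the same leaf of $\mathcal F$, then $\kappa^{(d)}(\hat x)$ and $\kappa^{(d)}(\hat y)$ lie in the same $P$-orbit by the separating property of $q$, which is exactly the $d$-infinitesimal homogeneity condition of Definition \ref{dfn:infinitesimal_homogeneity}. Invoking Theorem \ref{thm:singer}, or more precisely the mechanism underlying its proof that turns matching of the curvature $d$-jet into an actual local automorphism, yields a local automorphism sending $\hat x$ to $\hat y$, and hence a local automorphism of $(M,\mathcal C)$ between the corresponding base points. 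Therefore on $V'$ the $\Aut^{loc}$-orbits coincide with the connected components of the leaves of $\mathcal F$, which are closed submanifolds of $V'$.

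The main obstacle is precisely this last identification. Theorem \ref{thm:singer} as stated produces local homogeneity on open subsets where $d$-infinitesimal homogeneity holds, whereas a single leaf of $\mathcal F$ is rarely open in $M$. To close this gap one has to reread the proof of Theorem \ref{thm:singer} at the level of Killing generators of order $d$: such a generator is a formal jet at $\hat x$ annihilating $\kappa^{(d)}$, and the first task is to show that the evaluation of these generators at $\hat x$ spans exactly the tangent space to the leaf through $\hat x$. The second task is to integrate them to genuine local automorphisms, which amounts to a Frobenius argument performed in families transverse to $\mathcal F$ and relying on the constancy of rank of $q\circ\kappa^{(d)}$. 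This parametric integration is the real technical heart of the statement, and the Rosenlicht step only plays the role of providing the open dense domain on which it can be carried out.
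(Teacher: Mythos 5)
Your outline has the right overall shape --- pass to $\hat M$, differentiate the curvature, use algebraicity of the $P$-action to control the passage from level sets to orbits --- but as written it is not a proof: the step you yourself call ``the real technical heart'', namely integrating Killing generators into genuine local Killing fields, is precisely the content of Theorem \ref{thm:frobenius} and is left undone. In the paper this is where essentially all the work lies: one forms the submanifold $\Sigma=\{(x,u)\ :\ u\in\Kill^{s_0}(x)\}$ of $\hat M\times\g$ (Lemma \ref{lem:sigma_manifold}, which already requires Lemma \ref{lem:melnick} together with the stabilization $k_{s_0}=k_{s_0+1}$ on the integrability domain), shows that the distribution $\Delta$ is tangent to $\Sigma$ and involutive there (Lemmas \ref{lem:delta_tangent} and \ref{lem:delta_involutive}, the latter using the Jacobi identity for the structure functions $\gamma_{ij}^k$), and takes a leaf through $(\hx_0,A)$ as the graph of the desired Killing field. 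Without this, the identification of leaves with $\Kill^{loc}$-orbits is an assertion, not an argument. Note also that the correct conclusion is weaker than what you state: one does not obtain a local automorphism between \emph{any} two points of a leaf, only that each $\Aut^{loc}$-orbit is a \emph{union of connected components} of a closed, constant-rank level set --- which still suffices for closedness, and is what the paper actually proves.

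Two further concrete problems. First, the order of differentiation: $d=\dim P$ is the Singer bound, which controls the decreasing chain of stabilizers inside $\p$; for integrating Killing generators the relevant decreasing chain is $\Kill^1(\hx)\supset\Kill^2(\hx)\supset\cdots$ inside $\g$, so the order must be taken up to $m+1$ with $m=\dim G$, as in Theorem \ref{thm:frobenius}. Second, the Rosenlicht step is broken as stated: $\Omega\subset W$ is Zariski-open and dense in $W$, but the image of $\kappa^{(d)}$ may lie entirely in its complement (already for a flat geometry the map is constant and that constant value need not belong to $\Omega$), so $\hat V=(\kappa^{(d)})^{-1}(\Omega)$ can be empty rather than dense; you would have to stratify $W$ by iterating Rosenlicht on the closed complement. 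The paper avoids this machinery altogether: it restricts to the open dense subset where both $k_{m+1}$ and $x\mapsto\dim\bigl(P.\mathcal{D}^mK(\hx)\bigr)$ are locally constant, and then uses only that orbits of an algebraic action are open in their closure and that boundary orbits have strictly smaller dimension (Proposition \ref{prop:algebraic_orbit}) to conclude that $\Phi_i^{-1}(P.w)$ is a closed, $P$-saturated submanifold projecting onto a closed submanifold of $\Omega_i$ containing the orbit as a union of components.
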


Roughly speaking, Gromov's proof consists in considering, for some $r$, orbits of ``isometric $r$-jet'', called $I^r$-orbits, that are in fact sets of points on which the $r$-jet of the structure $\phi$ is the same everywhere. He proved that outside an exceptional set with empty interior, these orbits are the fibers of a submersion. The key point is then a result of integrability (a ``Frobenius theorem'') which stipulates that if the order $r$ is chosen large enough, two points close enough in the same $I^r$-orbit are related by a local isometry of the structure, proving that $I^r$-orbits coincide locally with orbits of local isometries. 

Gromov's proof, even simplified in \cite{benoist} or \cite{feres}, remains technical and difficult, even when the structure is as elementary as the data of a global frame.

Recently, Melnick proposed an analogous theorem of integrability in the setting of Cartan geometries (\cite{melnick}). The point of view is closer to Singer's approach and generalizes results that Nomizu formulated in the Riemannian setting (\cite{nomizu}). Melnick proved for instance that if $(M,\mathcal{C})$ is a compact analytic Cartan geometry, there exist $r_0$ such that two points are related by a local automorphism of the Cartan geometry if and only if the first $r_0$ covariant derivatives of the \textit{curvature} are equal at these points (\cite{melnick}, Proposition 3.8).

In the smooth case, there is not such a global statement and it is more relevant to look at the \textit{tangent directions} in which the first $r$ derivatives of the curvature are constant, namely ``Killing generators of order $r$'' in Melnick's terminology. The question is then : does there exist an $r_0$ such that any Killing generator of order $r_0$ can be integrated into a local Killing field ? She answered positively to it when we restrict the study to an open dense subset (\cite{melnick}, Theorem 6.3). However, whereas Melnick's proof is convincing for analytic structures, there seems to be a significant gap in the proof of her general theorem (\cite{melnick}, Theorem 6.3).

In the second half of this article, we propose a new approach of Melnick's result in the smooth case. Although we reuse most definitions of \cite{melnick} and give a similar statement, which is the content of the following Theorem \ref{thm:frobenius}, our proof is noticeably different and the existence of local Killing field is derived from the classical Frobenius theorem.

\begin{thm}[Integrability of Killing generators]
\label{thm:frobenius}
Let $(M,\mathcal{C})$ be a Cartan geometry modeled on $\X=G/P$. Set $m = \dim G$, note $\pi : \hat{M} \rightarrow M$ the associated $P$-principal bundle and $\omega$ the Cartan connection. There exists an open dense subset $\Omega \subset M$ such that for all $\hx \in \pi^{-1}(\Omega)$ and $A \in \Kill^{m+1}(\hx)$ a Killing generator of order $m+1$ at $\hx$, there exists a local Killing field on a neighbourhood of $x$ whose lift $\hat{A}$ satisfies $\omega(\hat{A}(\hx))=A$.
\end{thm}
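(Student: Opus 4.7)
The plan is to reformulate the existence of a local Killing field as an integrability statement for a distribution on the enlarged bundle $\hat{M} \times \g$, and then to apply the classical Frobenius theorem on an open dense subset where everything has constant rank.

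The starting point is a translation of the Killing equation into a first-order system. If $\hat{X}$ is the lift of a local Killing field and $A(\hat{y}) := \omega_{\hat{y}}(\hat{X}(\hat{y}))$, then expanding $\L_{\hat{X}}\omega = 0$ via the structure equation $\d\omega + \tfrac{1}{2}[\omega,\omega] = \Omega$ yields the first-order relation
\[
Y(A) = [A, \omega(Y)] - \Omega_{\hat{y}}\bigl(\omega_{\hat{y}}^{-1}(A), Y\bigr)
\]
for every $Y \in T_{\hat{y}}\hat{M}$. This motivates introducing on $\hat{M} \times \g$ the rank-$m$ distribution $\mathcal{D}$ whose value at $(\hx, A)$ is
\[
\mathcal{D}_{(\hx, A)} = \bigl\{\bigl(\omega_{\hx}^{-1}(B),\ [A, B] - \Omega_{\hx}(\omega_{\hx}^{-1}(A), \omega_{\hx}^{-1}(B))\bigr) : B \in \g\bigr\}.
\]
Any $m$-dimensional integral leaf of $\mathcal{D}$ through $(\hx, A)$ projects as a local diffeomorphism to $\hat{M}$ and defines by pull-back a vector field $\hat{X}$ satisfying $\L_{\hat{X}}\omega = 0$ and $\omega(\hat{X}(\hx)) = A$; the $P$-equivariance of the entire construction ensures that $\hat{X}$ descends to a local Killing field on a neighbourhood of $x = \pi(\hx)$.

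Next I would restrict to an open dense subset where the Killing generators form a smooth sub-bundle. The chain $\Kill^0(\hx) \supseteq \Kill^1(\hx) \supseteq \cdots$ consists of subspaces of $\g$, so every strict drop costs at least one dimension among $m = \dim \g$; consequently the chain stabilizes no later than $r = m+1$, i.e.\ $\Kill^{m+1}(\hx) = \Kill^r(\hx)$ for every $r \geq m+1$ and every $\hx$. By upper semi-continuity of $\hx \mapsto \dim \Kill^{m+1}(\hx)$, this dimension is locally constant on a $P$-invariant open dense subset $\hat{\Omega} \subset \hat{M}$, and we set $\Omega := \pi(\hat{\Omega})$. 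Over $\hat{\Omega}$, the collection $\hat{\mathcal{K}} := \{(\hx, A) \in \hat{\Omega}\times\g : A \in \Kill^{m+1}(\hx)\}$ is a smooth vector sub-bundle.

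The technical heart of the proof, and the main obstacle I anticipate, is to show two things over $\hat{\mathcal{K}}|_{\hat{\Omega}}$: first, that $\mathcal{D}$ is tangent to the sub-manifold $\hat{\mathcal{K}}$ along $\hat{\mathcal{K}}$, and second, that the restricted distribution $\mathcal{D}|_{\hat{\mathcal{K}}}$ is involutive. Both statements reduce, through the recursive definition of Killing generators, to the stability identity $\Kill^{m+1} = \Kill^{m+2}$ on $\hat{\Omega}$: the infinitesimal transport of $A \in \Kill^{m+1}(\hx)$ in direction $\omega_{\hx}^{-1}(B)$ prescribed by $\mathcal{D}$ remains inside $\Kill^{m+1}$ precisely because $A$ also lies in $\Kill^{m+2}$, and the Frobenius bracket of two sections of $\mathcal{D}|_{\hat{\mathcal{K}}}$, computed via the structure equation, yields an obstruction that is again absorbed by the $\Kill^{m+2}$ condition. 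The delicate part is to match carefully this algebraic recursion with the analytic Frobenius computation. Once this identification is done, classical Frobenius produces through any prescribed $(\hx, A) \in \hat{\mathcal{K}}|_{\hat{\Omega}}$ an $m$-dimensional integral leaf of $\mathcal{D}|_{\hat{\mathcal{K}}}$, and the construction of the second paragraph turns this leaf into the desired local Killing field near $x$ whose lift $\hat{A}$ satisfies $\omega(\hat{A}(\hx)) = A$.
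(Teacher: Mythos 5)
Your overall strategy is the paper's: encode the Killing equation as a rank-$m$ distribution on $\hat{M}\times\g$ (the paper does this on $M\times\R^n$ after reducing to the absolute parallelism $\mathcal{P}_{\omega}$ on $\hat{M}$), restrict it to the locus of Killing generators, apply the classical Frobenius theorem, and descend the resulting leaf to a local Killing field of the Cartan geometry via $P$-equivariance (Proposition \ref{prop:extension}). Your distribution and your reading of $\L_{\hat{X}}\omega=0$ as a first-order system via the structure equation are correct.

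However, there is a genuine gap at exactly the point you identify as the heart of the matter. Your mechanism for both tangency and involutivity is ``the stability identity $\Kill^{m+1}=\Kill^{m+2}$ on $\hat{\Omega}$'', which you derive from the claim that the chain $\Kill^1(\hx)\supseteq\Kill^2(\hx)\supseteq\cdots$ stabilizes at step $m+1$ for \emph{every} $\hx$ by dimension counting. That count only shows that at most $m$ of the inclusions can be strict, hence that \emph{some} index $s_0(\hx)\leq m+1$ satisfies $\Kill^{s_0}(\hx)=\Kill^{s_0+1}(\hx)$; it does not show $\Kill^{m+1}(\hx)=\Kill^{m+2}(\hx)$, because equality of two consecutive terms at a single point does not propagate --- the strict drops need not occur at consecutive indices, and $\Kill^{r+2}(\hx)$ involves genuinely new data (the $(r+2)$-th derivative of $K$) not determined by the lower-order jets at one point. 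Accordingly, your $\Omega$, defined by local constancy of $k_{m+1}$ alone, carries too weak a hypothesis. What the paper actually uses is: (i) the integrability domain on which \emph{all} of $k_1,\dots,k_{m+2}$ are locally constant, where one can choose the first index $s_0\leq m+1$ with $k_{s_0}=k_{s_0+1}$ as an identity of locally constant functions and then work with $\Kill^{s_0}\supseteq\Kill^{m+1}$; and (ii) the transport lemma (Lemma \ref{lem:melnick}, proved in the appendix by an ODE argument along exponential curves), which says that under these hypotheses the flows $(\phi^t_{\tilde{X}})_*$ of $\omega$-constant fields carry $\Kill^{s_0}$ onto $\Kill^{s_0}$. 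It is this lemma, and not a pointwise algebraic identity, that makes your $\hat{\mathcal{K}}$ a smooth submanifold and makes $\mathcal{D}$ tangent to it; your sketch offers no substitute for it. (By contrast, involutivity of the restricted distribution is a direct computation using only the Jacobi identity and the fact that the fibre vector is a Killing generator of order $1$, so that half of the ``technical heart'' is less delicate than you anticipate.)
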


Note that this result not only says that, on an open dense set, any Killing generator of a fixed and finite order extends into a local Killing field, but also that this order is completely determined by the \textit{model space} $\X$.

We then apply this to recover Gromov's result on the structure of orbits of local isometries for Cartan geometries. We obtain the following formulation.

\begin{thm}[Structure of $\Aut^{loc}$-orbits]
\label{thm:isloc_orbit}
Let $(M,\mathcal{C})$ be a Cartan geometry modeled on $\X = G/P$. Assume the adjoint action of $P$ on $\g$ algebraic. Then, there exists an open dense subset $\Omega$ of $M$ on which the orbits of $\Aut^{loc}(M,\mathcal{C})$ are closed submanifolds. 

More precisely, $\Omega$ admits a decomposition $\Omega = \Omega_1 \cup \cdots \cup \Omega_k$ where each $\Omega_i$ is an open subset, preserved by local automorphisms, and in which the $\Aut^{loc}$-orbits are closed submanifolds which all have the same dimension.
\end{thm}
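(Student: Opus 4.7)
The plan is to exploit Theorem \ref{thm:frobenius} in order to turn the infinitesimal data carried by Killing generators of order $m+1$ into an honest, constant-rank integrable distribution on a suitable open dense subset of $M$, then identify its leaves with the $\Aut^{loc}$-orbits, and finally upgrade these leaves to closed embedded submanifolds.

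\textbf{Step 1: upper semi-continuity and stratification.} For $\hx \in \hat{M}$, set $d(\hx) = \dim \Kill^{m+1}(\hx)$. The subspace $\Kill^{m+1}(\hx) \subset \g$ is cut out by linear equations whose coefficients are polynomial in the $(m+1)$-jet of the curvature function at $\hx$; the algebraicity of $\Ad(P)$ on $\g$ ensures that the $P$-equivariance conditions entering those equations are polynomial as well. Hence $d$ is upper semi-continuous on $\hat{M}$. It is moreover $P$-invariant, because $\Kill^{m+1}$ transforms by $\Ad$ along the fibers of $\pi$, and therefore descends to an upper semi-continuous function on $M$. Let $\Omega_{\mathrm{reg}} \subset M$ be the open dense set on which this descended function is locally constant, and partition $\Omega_{\mathrm{reg}}$ into its open level sets $M^{(k)}$.

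\textbf{Step 2: integrable distribution.} I then intersect with the open dense subset $\Omega_{\mathrm{frob}}$ supplied by Theorem \ref{thm:frobenius}, so that on each connected component $\Omega_i$ of the intersection the function $d$ is constant and every Killing generator of order $m+1$ integrates to a local Killing field. Consider the distribution $D_i \subset T\Omega_i$ whose value at $x$ is the span of $\{X(x) : X \text{ is a local Killing field defined near } x\}$. By Theorem \ref{thm:frobenius}, $D_i(x)$ coincides with the image of $\Kill^{m+1}(\hx)$ under the identification $\g/\p \simeq T_x M$ induced by $\omega$. Local constancy of $d$, combined with upper semi-continuity of the pointwise stabilizer dimension, forces $D_i$ to have locally constant rank; involutivity is automatic since local Killing fields form a Lie subalgebra. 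The classical Frobenius theorem then produces a regular foliation of $\Omega_i$ whose leaves are exactly the $\Aut^{loc}(M,\mathcal{C})$-orbits. These orbits are immersed submanifolds sharing the common dimension $\mathrm{rk}\, D_i$, and $\Omega_i$ is preserved by local automorphisms because they preserve $d$.

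\textbf{Step 3: closedness.} The delicate point is to upgrade each leaf from an immersed to a closed embedded submanifold. I would establish this through the $P$-equivariant jet map $j : \hat{M} \to J^{m+1}$ sending $\hx$ to the $(m+1)$-jet at $\hx$ of the curvature function. Preservation of the Cartan connection by local automorphisms yields one implication: the bundle lifts of two points in the same $\Aut^{loc}$-orbit lie in the same $P$-orbit of $J^{m+1}$. The converse is where Theorem \ref{thm:frobenius} really bites: two jet-equivalent points of $\pi^{-1}(\Omega_i)$ can be joined by an arc in a fiber of $j$ modulo $P$, and covering the arc by finitely many trivializing charts, along which I integrate Killing generators of order $m+1$, produces a chain of local automorphisms whose composition realizes the identification. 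The algebraicity hypothesis then guarantees that the $P$-orbits in $J^{m+1}$ are locally closed, hence their pull-backs under $j$ are locally closed in $\pi^{-1}(\Omega_i)$; a locally closed leaf of a regular foliation is necessarily a closed embedded submanifold.

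\textbf{Main obstacle.} The hardest step is the converse direction in Step 3, i.e.\ promoting a $P$-equivariant identification of jets into a genuine local automorphism of the Cartan geometry. Without Theorem \ref{thm:frobenius} this would require Gromov's delicate partial differential equation argument in the style of \cite{gromov} or \cite{melnick}; with the integrability statement at our disposal, it reduces to a patching of local flows of integrated Killing generators, and the algebraicity hypothesis enters only to control the ambient topology of the fibers of $j$.
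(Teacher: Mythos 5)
Your overall strategy matches the paper's (integrate Killing generators via Theorem \ref{thm:frobenius}, identify orbits with fibers of the jet of the curvature, stratify by dimension, invoke algebraicity), but Step 3 contains a genuine gap: you deduce closedness from the fact that the $P$-orbits in the jet space are \emph{locally closed}, via the claim that ``a locally closed leaf of a regular foliation is necessarily a closed embedded submanifold''. That claim is false: on the open cylinder $S^1\times\R$ foliated by the orbits of $\partial_\theta+\sin^2(\pi y)\,\partial_y$, every non-compact leaf is embedded (hence locally closed) yet spirals onto the compact leaves $\{y=n\}$ and is not closed. Local closedness of $P.w$ only yields local closedness of $j^{-1}(P.w)$, and nothing in your argument prevents an orbit from accumulating on a lower-dimensional orbit inside $\Omega_i$. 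The paper closes exactly this hole by introducing the lower semi-continuous, $\Aut^{loc}$-invariant function $x\mapsto\dim\bigl(P.\mathcal{D}^m K(\hx)\bigr)$, restricting to the open dense set where it is locally constant, and then using the algebraic fact that the dimension of an orbit drops strictly on the boundary of its closure (Proposition \ref{prop:algebraic_orbit}): a limit point of $\Phi^{-1}(P.w)$ inside $\hat{\Omega}_i$ would have its jet in $\overline{P.w}\setminus P.w$, hence a strictly smaller orbit dimension, contradicting local constancy. This extra stratification is not optional decoration; it is also what produces the finite decomposition $\Omega_1\cup\cdots\cup\Omega_k$ (your connected components could be infinite in number) and the common orbit dimension $d_i+k_{m+1}^i-\dim\p$.

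A secondary weak point is in Step 2: the rank of your distribution $D_i$ at $x$ equals $\dim\Kill^{m+1}(\hx)-\dim\bigl(\Kill^{m+1}(\hx)\cap\p\bigr)$, so constancy of $\dim\Kill^{m+1}$ alone does not give constant rank, and ``upper semi-continuity of the pointwise stabilizer dimension'' does not force its local constancy. The paper avoids foliating $M$ directly: it works upstairs in $\hat M$ with the level sets of $\mathcal{D}^m K$, whose tangent spaces are $\omega^{-1}(\Kill^{m+1}(\hx))$ and hence automatically of constant dimension on $M^{int}$, and only then projects; the isotropy dimension is controlled through the identity $\Lie(\Stab_P(w))=\Kill^{m+1}(\hx)\cap\p$ together with the constancy of $d_i$. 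Finally, $\Aut^{loc}$-orbits need not be connected, so they are unions of leaves (equivalently, unions of connected components of the level sets) rather than single leaves; this is harmless for the conclusion but your identification of leaves with orbits, and your converse ``jet-equivalent implies automorphism-equivalent'', should be weakened accordingly.
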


Our proof shows that in the class of Cartan geometries, the problem is fundamentally reduced to a question on the structure of the orbits of local automorphisms of a global framing on a manifold. This is why a section is devoted to the study of $\Aut^{loc}$-orbits in the geometric structure defined by an absolute parallelism. This question will be treated in an elementary way and we shall avoid the formalism of jets.

\subsection{Organization of the paper}

Section \ref{s:background} contains some generalities about Cartan geometries that will be used in this paper. The reader will find the definition of local and infinitesimal homogeneity in Section \ref{ss:homogeneity}. We then expose in Section \ref{s:singer} a short and direct proof of the generalized Singer's theorem (Theorem \ref{thm:singer}), and explain how to deduce the open dense orbit theorem from Singer's result. Section \ref{s:gromov} focuses mainly on the proofs of Theorems \ref{thm:frobenius} and \ref{thm:isloc_orbit}. As explained above, we have chosen to split their proofs and we first give the proofs in  the case of a parallelism, which is actually the core of the problem, before establishing the general statements. At last, we explain how our proof extends to more general geometric structures and that we can easily deduce the generalized Singer's theorem from Gromov's theorem, with a bound that is however less tight than in Theorem \ref{thm:singer}.

\subsection*{Conventions and notations}

The Lie algebra of a Lie group will be denoted by the same symbol printed in the fraktur font (for instance, $\g$ will design $\Lie(G)$). Often, we will deal with fiber bundles. We have chosen the following notation : the total space of a fibration over a manifold $M$ will be denoted by $\hat{M}$, and objects living in $\hat{M}$ over some object on the base $M$ will be noted with ``hats'' over the corresponding objects of $M$. For example, a point in $\hat{M}$ lying over a point $x \in M$ will be denoted $\hx$ and a bundle automorphism of $\hat{M}$ over a diffeomorphism $f \in \Diff(M)$ will be denoted $\hat{f}$.

In all this paper, $G$ will denote a Lie group, $P<G$ a closed subgroup and we will note $\X = G/P$. We will always make the following assumptions :
\begin{enumerate}
\item $G$ is connected ;
\item $P$ contains no non-trivial normal subgroup of $G$, or equivalently $P$ acts effectively on $G/P$.
\end{enumerate}

We will say that a manifold is ``algebraic'' if it is a smooth quasi-projective real variety. The model space will be said of \textit{algebraic type} when $\Ad_{\g}(P) < \GL(\g)$ is an algebraic subgroup.

\section{Geometric background : Cartan geometries}
\label{s:background}
A Cartan geometry is a geometric structure associated to a certain homogeneous space, called the \textit{model space}. Formally, a Cartan geometry on a manifold, modelled on $\X$, is a geometric structure on that manifold that shares the infinitesimal properties of the homogeneous space.

\begin{dfn}
\label{dfn:cartan}
Let $M$ be a manifold. A \textit{Cartan geometry} on $M$ modeled on $\X$ is the data  of $(M,\hat{M},\omega)$, where $\hat{M} \rightarrow M$ is a $P$-principal fiber bundle and $\omega$ is a $\g$ valued $1$-form on $\hat{M}$, called the \textit{Cartan form} or the \textit{Cartan connection}, satisfying the following conditions 
\begin{enumerate}
\item For all $\hx \in \hat{M}$, $\omega_{\hx} : T_{\hx}\hat{M} \rightarrow \g$ is a linear isomorphism ;
\item For all $X \in \p$, $\omega(X^*) = X$, where $X^*$ denotes the fundamental vector field associated to the right action of $\exp(tX)$ ;
\item For all $p \in P$, $(R_p)^*\omega = \Ad(p^{-1})\omega$.
\end{enumerate}
\end{dfn}

We now present some general facts and definitions about Cartan geometries. Fix a manifold $(M, \mathcal{C})$ endowed with a given Cartan geometry modeled on $\X$. We will always note $\pi : \hat{M} \rightarrow M$ the associated principal fiber bundle with structural group $P$ and $\omega$ its Cartan connection.

Note that the definition implies $\dim M = \dim \X$. The Cartan form defines an absolute parallelism on $\hat{M}$. If $X \in \g$, note $\tilde{X}= \omega^{-1}(X)$ the $\omega$-constant vector field associated to $X$. We have a notion of exponential map in $\hat{M}$ : if $\hx \in \hat{M}$, and $A \in \g$ is such that the flow $\phi^t_{\tilde{A}}$ is defined at time $1$, set $\exp(\hx,A) = \phi^1_{\tilde{A}}(\hx)$. For all $\hx$, this exponential map is defined in a neighbourhood of $0$ in $\g$ and satisfies the following property
\begin{equation*}
\exp(\hx.p,A) = \exp(\hx, \Ad(p)A).p.
\end{equation*}
An \textit{exponential neighbourhood} of a point in $\hx$ is the image by $\exp(\hx,.)$ of a small enough neighbourhood of $0$ in $\g$
 
Simplest examples of manifolds endowed with Cartan geometries modeled on $\X$ are open sets $U \subset \X$. The fibration will be the restriction to $\pi^{-1}(U)$ of the canonical projection $\pi : G \rightarrow \X$ and $\omega$ will be the left Maurer-Cartan form. In this case, $\omega$-constant vector fields are left invariant vector fields. As one shall expect, those examples are exceptional among all the possible Cartan geometries modeled on $\X$. In fact, up to automorphism, they are locally characterized by the vanishing of a $2$-form on $\hat{M}$, called the curvature form. Before detailing this last assertion, let us introduce the \textit{equivalence problem} on some fundamental examples.
\begin{enumerate}
\item Let $\X = (O(n)\ltimes \R^n)/O(n)$ be the Euclidean space. To each Riemannian manifold $(M^n,g)$ naturally corresponds a \textit{torsion free} (see \cite{sharpe}, p.184) Cartan geometry $(M^n,\mathcal{C})$ modeled on $\X$. Conversely, to each torsion free Cartan geometry on $M$ modeled on $\X$ canonically corresponds a Riemannian metric $g$ on $M$. Local automorphisms of the Cartan geometry (see bellow) are exactly local isometries of the associated metric. If we replace $O(n)$ by $O(p,q)$, we obtain pseudo-Riemannian metrics.
\item If $n \geq 3$, let $\X = O(1,n+1)/P$ be the conformal round sphere, where $P<O(1,n+1)$ is the stabilizer of an isotropic line in the $(n+2)$-dimensional Minkowski space. Then, there is also a $1-1$ correspondence between Riemannian conformal structures on $M^n$ and \textit{normal} Cartan geometries on $M^n$ modeled on $\X$ (see \cite{sharpe}, p.280). Local automorphisms of such a Cartan geometry are exactly the local conformal diffeomorphisms of the associated conformal class of metrics. If we replace the round sphere by its pseudo-Riemannian generalization, namely the Einstein space $\Ein^{p,q}$, Cartan geometries modeled on $\Ein^{p,q}$ correspond to conformal classes of metrics of signature $(p,q)$.
\item Let $\X = (\GL_n(\R)\ltimes \R^n)/\GL_n(\R)$ be the affine space. Then, there exists a $1-1$ correspondence between manifolds  $(M^n,\nabla)$ endowed with a linear connection and Cartan geometries $(M,\mathcal{C})$ modeled on $\X$. Local automorphisms of $(M,\mathcal{C})$ are exactly local affine transformations of $(M,\nabla)$.
\end{enumerate}

This list is not exhaustive. The procedure that consists in finding a natural ``classic'' geometric structure associated to Cartan geometries of a certain type is called the \textit{equivalence problem}. It has been solved in other cases than those cited above (for instance projective structures or non-degenerate CR-structures, see \cite{capschichl}) and then, Cartan geometries \textit{modelize} a large class of geometric structures.

\subsection{Curvature of a Cartan geometry}

In the general case, let us introduce the $2$-form $\Omega = \d \omega + \frac{1}{2}[\omega,\omega]$ on $\hat{M}$, called the \textit{curvature form}. The geometry will be said \textit{flat} if $\Omega = 0$. Since in any Lie group, the left Maurer-Cartan form satisfies the structural equation, it is clear that any Cartan geometry that is locally isomorphic (see Definition \ref{dfn:local_automorphism} below) to its model must be flat. Conversely, it turns out that any flat Cartan geometry must be locally isomorphic to its model (see \cite{sharpe}, theorem 5.5.1). It can be shown that such flat Cartan geometries on $M$ are exactly the data of a $(G,\X)$-structure on $M$ (see for instance \cite{cap}, Remark 1.5.2).

The Cartan connection trivializes $T\hat{M}$, and then the curvature $2$-form gives rise to a map $K : \hat{M} \rightarrow \Hom(\Lambda^2\g,\g)$, called the \textit{curvature map}. Since $\Omega$ is horizontal, for all $\hx$, $K(\hx)$ factorizes in $\Hom(\Lambda^2(\g/\p),\g)$. The equivariance property of $\omega$ implies that $K$ is $P$-equivariant, when $P$ acts on the right on $\Lambda^2(\g / \p)^* \otimes \g$ via the tensor product of the representations $\Lambda^2 \bar{\Ad}^*$ and $\Ad$. At last, $K$ satisfies the relation $K(\hx)(X,Y) = [X,Y] - \omega_{\hx}([\tilde{X},\tilde{Y}])$ (\cite{sharpe}, p.192).

\subsection{Isomorphisms and Killing fields}

Our hypothesis on the model space $\X$ implies the following lemma, fundamental in the study of local automorphisms of the Cartan geometry. The result is well known and a proof can be found in \cite{melnick}, Proposition 3.6.

\begin{lem}
Let $U \subset M$ be an open set and $F$ be a bundle automorphism of $\pi^{-1}(U)$ over the identity such that $F^* \omega = \omega$. Then, $F=\id$.
\end{lem}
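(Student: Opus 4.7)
The plan is to parameterize $F$ by a $P$-valued function $\phi$ and reduce the lemma to an algebraic condition on $\phi$ that the effectiveness hypothesis rules out. Since $F$ is a bundle automorphism over $\id_M$, each $F(\hat{x})$ lies in the fiber of $\hat{x}$, so there is a unique smooth map $\phi : \pi^{-1}(U) \to P$ with $F(\hat{x}) = \hat{x} \cdot \phi(\hat{x})$; the $P$-equivariance of $F$ forces $\phi(\hat{x} \cdot q) = q^{-1}\phi(\hat{x})q$. Differentiating the product $\hat{x} \cdot \phi(\hat{x})$ and using the defining properties (2)--(3) of the Cartan form, I would compute
\[ F^*\omega = \Ad(\phi^{-1})\omega + \phi^*\theta_P, \]
where $\theta_P$ is the left Maurer--Cartan form of $P$. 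The hypothesis $F^*\omega = \omega$ then becomes the master equation
\[ (I - \Ad(\phi^{-1}))\omega = \phi^*\theta_P. \]
Since $\omega_{\hat{x}}$ is surjective onto $\g$ while $\phi^*\theta_P$ is $\p$-valued, this gives $(I - \Ad(\phi(\hat{x})^{-1}))(\g) \subset \p$, i.e., $\phi$ takes values in $P_0 := \ker(\bar{\Ad}\colon P \to \GL(\g/\p))$, whose Lie algebra is $\p_0 = \{Z \in \p : [Z, \g] \subset \p\}$.

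The key step is an iteration. Knowing $\phi$ lands in $P_0$, the form $\phi^*\theta_P$ is actually $\p_0$-valued, so the master equation sharpens to $(I - \Ad(\phi^{-1}))\g \subset \p_0$, and $\phi$ factors through $P_1 := \ker(\bar{\Ad}\colon P \to \GL(\g/\p_0))$, with Lie algebra $\p_1 = \{Z \in \p : [Z, \g] \subset \p_0\}$. Repeating, I would build a decreasing sequence $\p \supset \p_0 \supset \p_1 \supset \cdots$ of $\Ad(P)$-invariant subspaces of $\p$; by finite-dimensionality it stabilizes at some $\p_\infty$ satisfying $[\p_\infty, \g] \subset \p_\infty$, which is a Lie ideal of $\g$ contained in $\p$. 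The effectiveness hypothesis forces $\p_\infty = 0$, so $\phi$ takes values in a discrete subgroup of $P$, and by connectedness on each component of $\pi^{-1}(U)$, $\phi$ is locally constant, say $\phi \equiv p_*$. The master equation then gives $\Ad(p_*^{-1}) = I$ on $\g$, so $p_* \in Z(G) \cap P$, a normal subgroup of $G$ contained in $P$, hence trivial. Therefore $\phi \equiv e$ and $F = \id$.

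The main obstacle will be justifying the iteration carefully: at each stage I need to verify that $\p_n$ is $\Ad(P)$-invariant so that the quotient $\g/\p_n$ is a well-defined $P$-module (and $P_{n+1}$ makes sense), and that the stabilizing subspace $\p_\infty$ is genuinely a Lie ideal of $\g$ — not merely $\Ad(P)$-invariant — so that the effectiveness hypothesis can be applied. A possibly disconnected fibre $P$ would also require a brief additional argument on each connected component of $\pi^{-1}(U)$.
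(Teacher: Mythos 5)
Your argument is correct. Note that the paper itself gives no proof of this lemma --- it simply cites Melnick (Proposition 3.6 of \cite{melnick}) --- so there is nothing internal to compare against; your write-up is a valid self-contained proof along the standard lines. The two points you flag as needing care do go through: $\Ad(P)$-invariance of each $\p_n$ follows by induction from $[\Ad(p)Z,\g]=\Ad(p)[Z,\Ad(p)^{-1}\g]=\Ad(p)[Z,\g]\subset\Ad(p)\p_n=\p_n$, and the stabilized space satisfies $[\p_\infty,\g]\subset\p_\infty$ by the very definition of $\p_{n+1}$ at the stage where $\p_{n+1}=\p_n$, so it is genuinely an ideal of $\g$ contained in $\p$; since $G$ is connected, the subgroup it generates is normal and contained in $P$, hence trivial by the paper's effectivity hypothesis. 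The final step also uses connectedness of $G$ (to identify $\ker\Ad$ with $Z(G)$) and effectivity once more to kill $Z(G)\cap P$; both are among the paper's standing assumptions on the model. One small point worth making explicit in a final version is the smoothness of $\phi$ (immediate from local trivializations of the $P$-bundle) and the derivation of the formula $F^*\omega=\Ad(\phi^{-1})\omega+\phi^*\theta_P$ from axioms (2) and (3) of Definition \ref{dfn:cartan}, which is routine.
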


If we are given two Cartan geometries $(M_i,\hat{M}_i,\omega_i)$, $i=1,2$, on the same model $\X$, an \textit{isomorphism} between them is a diffeomorphism $f: M_1 \rightarrow M_2$ that lifts to a bundle isomorphism $\hat{f}$ such that ${\hat{f}}^* \omega_2 = \omega_1$. Such a lift must be unique according to the previous lemma. \textit{Automorphisms} of a Cartan geometry $(M,\mathcal{C})$ modeled on $\X$ are isomorphisms into itself. They form a finite dimensional Lie group $\Aut(M,\mathcal{C})$. This group acts on $\hat{M}$, and its action is free and proper, since lifts of automorphisms preserve an absolute parallelism on $\hat{M}$.

\begin{dfn}[Local automorphisms and Killing fields]
\label{dfn:local_automorphism}
Let $U,V \subset M$ be two open sets. A \textit{local isomorphism} between $U$ and $V$ is a diffeomorphism $f : U \rightarrow V$ that lifts to a bundle automorphism $\hat{f} : \pi^{-1}(U) \rightarrow \pi^{-1}(V)$ preserving the Cartan connection.

A \textit{Killing field (resp. local Killing field)} is a vector field on $M$ (resp. an open subset $U \subset M$) whose local flow is composed with local automorphisms.
\end{dfn}

Let $X$ be a local Killing field on an open set $U$. Then, $X$ lifts to a field $\hat{X}$ on $\pi^{-1}(U)$ whose flow $\phi^t_{\hat{X}}=\hat{\phi^t_X}$ is the lift to $\pi^{-1}(U)$ of the flow of $X$. Automatically, $\hat{X}$ commutes with the right action of the structural group $P$ and satisfies $\L_{\hat{X}} \omega = 0$. Conversely, if a field on $\pi^{-1}(U)$ satisfies this properties, then its projection on $U$ is a local Killing field.

If $f : U \rightarrow V$ is a local automorphism between two connected open sets, then $f$ is completely determined by the image $\hat{f}(\hx)$ of an arbitrary point $\hx \in \pi^{-1}(U)$ (recall $f$ preserves a global frame). This implies that if $\hat{X}$ is the lift to $\pi^{-1}(U)$ of a local Killing field $X$ defined on an open set $U$, then $\hat{X}$ (and \textit{a fortiori} $X$) is uniquely determined by its value at any point $\hx \in \pi^{-1}(U)$. 

Note $\Kill^{loc}(x)$ the set of germs of local Killing fields at $x \in M$. They form a vector space, and if $\hx$ lies over $x$, the evaluation $X \in \Kill^{loc}(x) \mapsto \hat{X}(\hx)$ is a linear injective map, and this proves that $\Kill^{loc}(x)$ is a finite dimensional vector space. In fact, $\Kill^{loc}(x)$ is more than an abstract set of germs of vector fields. Consider a decreasing sequence of connected open sets $U_i$ such that $\bigcap_{i \geq 0} U_i = \{x\}$ and note $\Kill_i(x)$ the vector space of local Killing fields at $x$ defined on $U_i$. Since $\Kill_i(x) \subset \Kill_{i+1}(x)$ and $\Kill_i(x) \hookrightarrow \Kill^{loc}(x)$ for all $i \geq 0$, there exists $i_0$ such that $\Kill^{loc}(x) \simeq \Kill_{i_0}(x)$, \textit{ie} every local Killing field at $x$ is defined at least on a common neighbourhood $U_{i_0}$ of $x$. 

\begin{prop}
\label{prop:extension}
Let $F : \mathcal{U} \rightarrow \mathcal{V}$ be a diffeomorphism between two open sets of $\hat{M}$ that preserves the Cartan form. Assume that $\mathcal{U}$ is connected and that for all $x \in \pi(\mathcal{U})$, $\mathcal{U} \cap \pi^{-1}(x)$ is connected. Set $U=\pi(\mathcal{U})$ and $V=\pi(\mathcal{V})$. Then $F$ extends to a bundle isomorphism $\hat{f} : \pi^{-1}(U) \rightarrow \pi^{-1}(V)$ that is the lift of a local isomorphism $f$ between $U$ and $V$.

Let $\mathcal{U} \subset \hat{M}$ be a connected open set such that for all $x \in \pi(\mathcal{U})$, $\mathcal{U} \cap \pi^{-1}(x)$ is connected. Let $\hat{X}$ be a field on $\mathcal{U}$ such that $\L_{\hat{X}}\omega = 0$. Then $\hat{X}$ is in fact the restriction to $\mathcal{U}$ of a local Killing field acting on the saturated open set $\pi^{-1}(\pi(\mathcal{U}))$.
\end{prop}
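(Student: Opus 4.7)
My plan is to extend $F$ and $\hat{X}$ across the fibers by transporting them with the right $P$-action. In both cases I would define the extension by
\[
\hat{f}(\hx \cdot p) := F(\hx) \cdot p \qquad \text{and} \qquad \tilde{X}(\hx \cdot p) := (R_p)_* \hat{X}(\hx), \qquad \hx \in \mathcal{U}, \ p \in P.
\]
Granting that these formulas are well-defined, the remaining verifications are routine: smoothness is automatic, $P$-equivariance is built in by construction, and the identity $\hat{f}^*\omega = \omega$ (respectively $\L_{\tilde{X}}\omega = 0$) on the saturated set $\pi^{-1}(U)$ follows from the corresponding fact on $\mathcal{U}$ together with the equivariance $(R_p)^*\omega = \Ad(p^{-1})\omega$ of a Cartan connection. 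Once this is established, $\hat{f}$ covers a local isomorphism $f: U \to V$ (with inverse constructed symmetrically from $F^{-1}$), and the characterization of local Killing fields recalled after Definition \ref{dfn:local_automorphism} ensures that $\tilde{X}$ descends to a local Killing field on $U$. The only real obstacle is therefore well-definedness.

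It reduces to the following claim: whenever $\hx \in \mathcal{U}$ and $q\in P$ satisfy $\hx\cdot q\in\mathcal{U}$, one has $F(\hx\cdot q) = F(\hx)\cdot q$ (respectively $\hat{X}(\hx\cdot q) = (R_q)_*\hat{X}(\hx)$). The crucial infinitesimal input I would extract from condition 2 of Definition \ref{dfn:cartan}: since $\omega(X^*) = X$ for $X\in\mathfrak{p}$, the hypothesis $F^*\omega = \omega$ forces $F_* X^* = X^*$, while $\L_{\hat{X}}\omega = 0$ forces $[\hat{X}, X^*] = 0$. In both cases this translates into a local commutation with the right action of $\exp(tX)$, $X\in\mathfrak{p}$, wherever both sides are defined.

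To globalize this commutation, I would use the connected-fiber hypothesis by choosing a smooth path $t\mapsto \hx\cdot q(t)$ in $\mathcal{U}\cap\pi^{-1}(\pi(\hx))$, $t\in[0,1]$, with $q(0) = e$ and $q(1) = q$, and considering the ``defect''
\[
\phi(t) \;=\; F(\hx\cdot q(t))\cdot q(t)^{-1} \qquad \text{or} \qquad \phi(t) \;=\; (R_{q(t)^{-1}})_* \hat{X}(\hx\cdot q(t)).
\]
Writing locally $q(t_0 + \epsilon) = q(t_0)\exp(\epsilon X_0 + O(\epsilon^2))$ with $X_0 = q(t_0)^{-1}\dot{q}(t_0)\in\mathfrak{p}$, the infinitesimal commutation applied at the point $\hx\cdot q(t_0)\in\mathcal{U}$ gives $\phi(t_0+\epsilon) = \phi(t_0) + O(\epsilon^2)$. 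Hence $\phi$ is locally constant, so $\phi(1) = \phi(0)$, which is exactly the desired identity. This is the step I expect to be the heart of the argument, and it is also precisely where the connected-fiber assumption is indispensable: since $P$ itself need not be connected, without it the ``naive'' $P$-equivariant extension written above could genuinely depend on the choice of representative $\hx$.
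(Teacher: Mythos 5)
Your proposal is correct and follows essentially the same route as the paper: the paper's proof also defines the extension by forcing $P$-equivariance, observes that the only issue is well-definedness, and derives it from the fact that $F$ (resp.\ $\hat{X}$) must commute with the fundamental vector fields $A^*$, $A \in \p$, using the connectedness of $\mathcal{U} \cap \pi^{-1}(x)$. Your path argument with the defect $\phi(t)$ simply makes explicit the globalization step that the paper compresses into a single ``therefore''.
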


\begin{proof}
There is no choice to build the extensions of $F$ and $X$ since they have to commute with the action of the structural group. The only condition is that this extension is well-defined. We explain it in the case of a vector filed. Such a field $\hat{X}$ has to commute with the fundamental vector fields $A^*$, $A \in \p$ (Definition \ref{dfn:cartan}). Therefore, if two points $\hx \in \mathcal{U}$ and $\hx.p \in \mathcal{U}$ lie in the same fiber, then $\hat{X}_{\hx.p} = (R_p)_* \hat{X}_{\hx}$. Thus, we can define an extension of $\hat{X}$ to the saturated open set $\mathcal{U}.P$. One easily verifies that this extension is indeed a Killing field.
\end{proof}

If we want to exhibit a local Killing field in the neighbourhood of a point $x \in M$, we only have to build a field on a convex exponential neighbourhood of some $\hx \in \hat{M}$ lying over $x$ that commutes with every $\omega$-constant vector field, that is a local Killing field of a framing on this neighbourhood.

\subsection{Orbits of local automorphisms, local and infinitesimal homogeneity}
\label{ss:homogeneity}
The $\Aut^{loc}$\textit{-orbit} of a point $x$ in a manifold endowed with a Cartan geometry is the set of points $y$ such that there exists a local automorphism sending $x$ to $y$. The $\Kill^{loc}$\textit{-orbit} of a point $x \in M$ is the set of points of $M$ that can be reached from $x$ by a finite sequence of flows of local Killing fields. Obviously, any $\Kill^{loc}$-orbit is contained in an $\Aut^{loc}$-orbit.

\begin{dfn}[Local homogeneity]
A Cartan geometry on a manifold $M$ is said to be locally homogeneous if it contains only one $\Aut^{loc}$-orbit.
\end{dfn}

If $\hat{f}$ is the lift of a local automorphism $f : U \rightarrow V$, we have $(\hat{f})^*\Omega = \Omega$, and then $K \circ \hat{f} = K$ on  $\pi^{-1}(U)$. Therefore, we see that if $\mathcal{O} \subset M$ is an $\Aut^{loc}$-orbit, $K(\pi^{-1}(\mathcal{O}))$ is a single $P$-orbit in $\Hom(\Lambda^2(\g/\p),\g)$.

\subsubsection{Universal covariant derivative}

\begin{dfn}[Universal covariant derivative, \cite{sharpe}, p.194]
\label{dfn:covariant_derivative}
Let $V$ be a vector space and $\rho : P \rightarrow \GL(V)$ a linear representation. Let $F : \hat{M} \rightarrow V$ be a $P$-equivariant map.  Since we are given an absolute parallelism on $\hat{M}$, the differential $T F : T\hat{M} \rightarrow TV$ can be interpreted as an equivariant  map $\mathcal{D}^1 F : \hat{M} \rightarrow V \oplus \Hom(\g,V)$ setting 
\begin{equation*}
\forall	\hx \in \hat{M}, \ \mathcal{D}^1 F (\hx) = (F(\hx),D^1 F(\hx)) \text{, where }  \forall A \in \g, \ D^1 F (\hat{x})(A) = T_{\hx} F (\tilde{A}_{\hx}).
\end{equation*}
\end{dfn}

The tangent map $TF$ is of course completely determined by this equivariant map $\mathcal{D}F$. A very nice aspect of this point of view is that the map $F$ and its differential are of the same nature. Thus, we can iterate the process and get $\mathcal{D}^rF$ the $r$-th derivative of $F$, defined after some identifications by
\begin{equation*}
\begin{array}{cccl}
\mathcal{D}^r F : & \hat{M} & \rightarrow & V \oplus \Hom(\g,V) \oplus \cdots \oplus \Hom(\otimes^r \g,V) \\
                     & \hx & \mapsto & (F(\hx), D^1F(\hx), \ldots , D^r F(\hx)),
\end{array}
\end{equation*}
where $D^r F(\hx)(X_1 \otimes \cdots \otimes X_r):=(\tilde{X_1}.\ldots.\tilde{X_r}.F)(\hx)$. It is not difficult to prove that the $r$-th derivative of $F$ is $P$-equivariant, where $P$ acts on the right on each factor $\Hom(\otimes^r \g, V)$ in the following way : for all $w \in \Hom(\otimes^r \g, V)$ and $p \in P$,
\begin{equation*}
w.p : X_1 \otimes \cdots \otimes X_r \mapsto \rho(p).w (\Ad(p)X_1,\ldots, \Ad(p)X_r).
\end{equation*}
At last, if $f : U \rightarrow V$ is a local automorphism of $(M,\mathcal{C})$ such that $F \circ \hat{f} = F$ on $\pi^{-1}(U)$, then for all $r \geq 0$, we have $\mathcal{D}^r F \circ \hat{f} = \mathcal{D}^r F$ on $\pi^{-1}(U)$.

\subsubsection{Infinitesimal homogeneity}

Let $V = \Hom(\Lambda^2(\g/\p),\g)$ be the target space of the curvature map $K$. Note $\mathcal{W}^r:=V \oplus \Hom(\g,V) \cdots \oplus \Hom(\otimes^r \g, V)$. Since any local automorphism $f$ satisfies $K \circ \hat{f} = K$, if $\mathcal{O}$ is an $\Aut^{loc}$-orbit, then for all $r \geq 0$, $\mathcal{D}^r K (\pi^{-1}(\mathcal{O}))$ is a single $P$-orbit in $\mathcal{W}^r$. This motivates the last definition.

\begin{dfn}[Infinitesimal homogeneity]
\label{dfn:infinitesimal_homogeneity}
The geometry of $M$ is said $r$-infinitesimally homogeneous, for $r \geq 1$, if the $P$-equivariant map $\mathcal{D}^r K$ takes values in a single $P$-orbit in $\mathcal{W}^r$.
\end{dfn}

\section{Singer's theorem for Cartan geometries}
\label{s:singer}
Let $(M,\mathcal{C})$ be a Cartan geometry modeled on $\X = G/P$, with $M$ connected. Under the hypotheses of generalized Singer's theorem, subbundles with interesting properties appear, and infinitesimal homogeneity of $(M,\mathcal{C})$ is derived from their study. We have isolated the essential properties of this object, and we now present them. Theorem \ref{thm:singer} will follow easily.

\subsection{Parallel submanifolds in Cartan geometries}

\begin{dfn}
A submanifold $\Sigma \subset \hat{M}$ is said to be \textit{parallel} if there exists a vector subspace $\h \subset \g$ such that for each $\hx \in \Sigma$, $\omega(T_{\hx}\Sigma)=\h$.
\end{dfn}

Parallel submanifolds appear in the following context.

\begin{lem}
\label{lem:level_set}
Let $F : \hat{M} \rightarrow V$ be a map taking values in a vector space. Assume $F$ has constant rank and note $\Sigma = F^{-1}(v_0)$ for $v_0 \in V$. If $\mathcal{D}^1F : \hat{M} \rightarrow V \oplus \Hom(\g,V)$ is constant over $\Sigma$, then $\Sigma$ is parallel.
\end{lem}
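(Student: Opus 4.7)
The statement admits a short and direct proof, so the plan will be to unwind the definitions and exploit the fact that $\omega$ provides a global framing of $\hat{M}$.

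First, since $F$ has constant rank, the constant-rank theorem guarantees that $\Sigma = F^{-1}(v_0)$ is a (regular) submanifold of $\hat{M}$, and at every $\hx \in \Sigma$ we have the identification $T_{\hx}\Sigma = \ker T_{\hx}F$. So to compute $\omega(T_{\hx}\Sigma)$, I only need to understand $\omega(\ker T_{\hx}F) \subset \g$.

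Next, I would use the Cartan form to rewrite this kernel algebraically. Because $\omega_{\hx}$ is a linear isomorphism, every tangent vector $v \in T_{\hx}\hat{M}$ is of the form $\tilde{A}_{\hx}$ for a unique $A = \omega_{\hx}(v) \in \g$. By the very definition of $D^1 F$ (Definition \ref{dfn:covariant_derivative}), we have $T_{\hx}F(\tilde{A}_{\hx}) = D^1 F(\hx)(A)$. Consequently
\begin{equation*}
\omega(T_{\hx}\Sigma) \;=\; \omega(\ker T_{\hx}F) \;=\; \ker D^1 F(\hx) \;\subset\; \g.
\end{equation*}

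Finally, the hypothesis that $\mathcal{D}^1 F$ is constant over $\Sigma$ means in particular that the linear map $D^1 F(\hx) : \g \rightarrow V$ does not depend on $\hx \in \Sigma$; call it $L$. Setting $\h := \ker L \subset \g$, the computation above yields $\omega(T_{\hx}\Sigma) = \h$ for every $\hx \in \Sigma$, which is exactly the parallelism condition. There is no real obstacle here: the only point that requires care is checking that $\Sigma$ is genuinely a submanifold whose tangent space coincides with $\ker T_{\hx}F$, and this is precisely what the constant-rank assumption on $F$ is there to ensure.
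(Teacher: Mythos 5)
Your proof is correct and follows essentially the same route as the paper's: identify $T_{\hx}\Sigma = \Ker T_{\hx}F$ via the constant-rank hypothesis, translate through $\omega$ to get $\omega(T_{\hx}\Sigma) = \Ker D^1F(\hx)$, and conclude from the constancy of $D^1F$ over $\Sigma$ that this kernel is a fixed subspace $\h$. No discrepancies to report.
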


\begin{proof}
Remind that for us, $\mathcal{D}^1F(\hx) = (F(\hx),D^1F(\hx))$, where $D^1F(\hx)(A) = T_{\hx}F(\tilde{A}_{\hx})$. Here, $D^1F$ is constant equal to $w_0 \in \Hom(\g,V)$ all over $\Sigma$. Since for all $\hx \in \Sigma$, $T_{\hx}\Sigma = \Ker T_{\hx} F$, we have
\begin{align*}
\omega(T_{\hx}\hat{\Sigma}) & = \{A \in \g \ :\ (\L_{\tilde{A}}. F) (\hx) = 0 \} \\
                       & = \{A \in \g \ : \ D^1F(\hx)(A) = 0\} \\
                       & = \Ker w_0.
\end{align*}
Hence, $\Sigma$ is parallel.
\end{proof}

If $\Sigma$ is a parallel submanifold, it is natural to consider the restrictions to $\Sigma$ of $\omega$-constant fields $\tilde{X}$, for $X \in \h$. Those vector fields are tangent to $\Sigma$ and we name them $(\h,\omega)$-constant vector fields. Those fields define an absolute parallelism on $\Sigma$ noted $\mathcal{P}^{\Sigma}$.

If the curvature map of $(M,\mathcal{C})$ is moreover constant over a parallel submanifold, then we are almost able to prove the local homogeneity of $(M,\mathcal{C})$. This is the content of the following proposition.

\begin{prop}[Parallel submanifolds with constant curvature]
\label{prop:constant_curvature}
Let $\Sigma$ be a connected parallel submanifold of $\hat{M}$ with ``space'' $\h$.
\begin{enumerate}
\item If the curvature map $K$ is constant over $\Sigma$, then $(\Sigma,\mathcal{P}^{\Sigma})$ is locally homogeneous.
\item If moreover, $\h$ and $\p$ intersect transversally, then $\pi|_{\Sigma} : \Sigma \rightarrow M$ is a submersion and the connected components of $\pi(\Sigma)$ are open $\Kill^{loc}$-orbits. In particular, if $\pi(\Sigma) = M$, then $(M,\mathcal{C})$ is locally homogeneous.
\end{enumerate}
\end{prop}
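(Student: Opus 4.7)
For Part~1, I start from the Bianchi-type identity $K(\hx)(X,Y)=[X,Y]-\omega_{\hx}([\tilde X,\tilde Y])$ recalled in Section~\ref{s:background}. For $X,Y\in\h$ the fields $\tilde X|_\Sigma$ and $\tilde Y|_\Sigma$ are tangent to $\Sigma$, hence so is their bracket, and the hypothesis $K|_\Sigma\equiv K_0$ constant forces
\begin{equation*}
[\tilde X|_\Sigma,\tilde Y|_\Sigma]=\widetilde{\{X,Y\}}\big|_\Sigma,\qquad \{X,Y\}:=[X,Y]-K_0(X,Y)\in\h.
\end{equation*}
Thus $X\mapsto\tilde X|_\Sigma$ is an injective Lie algebra morphism from $(\h,\{\cdot,\cdot\})$ into $\mathfrak X(\Sigma)$, which in passing confirms Jacobi for $\{\cdot,\cdot\}$, and the parallelism $\mathcal P^\Sigma$ has constant structure functions in its frame. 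Letting $H'$ be the simply connected Lie group with Lie algebra $(\h,\{\cdot,\cdot\})$ endowed with its left-invariant parallelism $\mathcal P^{H'}$, a standard Frobenius argument on $\Sigma\times H'$ (integrability of the graph distribution follows from constancy of the structure functions) yields local parallelism-isomorphisms $(\Sigma,\mathcal P^\Sigma)\simeq(H',\mathcal P^{H'})$ between arbitrary pairs of points; since right translations on $H'$ are local automorphisms of $\mathcal P^{H'}$ acting transitively, $(\Sigma,\mathcal P^\Sigma)$ is locally homogeneous.

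For Part~2, the transversality $\h+\p=\g$ first gives $d\pi(T_{\hx}\Sigma)=d\pi(\omega^{-1}_{\hx}(\h))\simeq(\h+\p)/\p=\g/\p\simeq T_{\pi(\hx)}M$, so $\pi|_\Sigma$ is submersive and $\pi(\Sigma)$ is open in $M$. The core is then to prolong the local Killing fields of $\mathcal P^\Sigma$ produced in Part~1 into local Killing fields of the full Cartan geometry. I would pick a complement $\mathfrak m$ of $\h\cap\p$ in $\p$, so that $\h\oplus\mathfrak m=\g$ and $\Sigma\cdot P$ is open in $\hat M$, and extend a $\mathcal P^\Sigma$-Killing field $\hat Y_0$ on some $\mathcal U\subset\Sigma$ to $\mathcal U\cdot P$ by $P$-equivariance, $\hat Y(\hx\cdot p):=(R_p)_*\hat Y_0(\hx)$. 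The verification $\mathcal L_{\hat Y}\omega=0$ then goes as follows: $P$-equivariance kills $[\hat Y,A^*]$ for every $A\in\p$, and the $\mathcal P^\Sigma$-Killing property kills $[\hat Y,\tilde X]_{\hx}$ for $X\in\h$ and $\hx\in\mathcal U$; since $\h\oplus\mathfrak m=\g$, these two cases exhaust $\g$, so $\mathcal L_{\hat Y}\omega$ vanishes along $\Sigma$, and the joint $P$-equivariance of $\omega$ and $\hat Y$ propagates this vanishing to all of $\mathcal U\cdot P$.

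Proposition~\ref{prop:extension} then turns $\hat Y$ into a local Killing field of $(M,\mathcal C)$ defined on $\pi(\mathcal U)$. Combined with local transitivity of $\Aut^{loc}(\mathcal P^\Sigma)$ from Part~1 and submersivity of $\pi|_\Sigma$, these prolonged fields span $T_xM$ at every $x\in\pi(\Sigma)$; the $\Kill^{loc}$-orbit of each such point is therefore open, and by connectedness each connected component of $\pi(\Sigma)$ is a single $\Kill^{loc}$-orbit. The main obstacle will be precisely this prolongation step, since a $\mathcal P^\Sigma$-Killing field a priori controls only the infinitesimal automorphisms of the $\h$-frame along $\Sigma$; the transversality $\h+\p=\g$ is what allows the $P$-equivariant extension to cover the remaining $\mathfrak m$-directions and produce a genuine symmetry of the Cartan connection.
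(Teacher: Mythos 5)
Your proof is correct and its overall architecture coincides with the paper's: a modified bracket $[X,Y]'=[X,Y]-K_0(X,Y)$ on $\h$ turning the $(\h,\omega)$-constant fields into a finite-dimensional Lie algebra with transitive local Killing fields of $\mathcal{P}^{\Sigma}$ in Part 1, and in Part 2 a $P$-equivariant extension of these fields to the saturated open set, followed by the verification that $\L_{\hat{Y}}\omega$ vanishes on vertical directions and on $\h$-directions along $\Sigma$, propagated by equivariance and concluded via Proposition \ref{prop:extension}. The one genuinely different ingredient is the integration step in Part 1: the paper invokes Palais' theorem to integrate the infinitesimal action of $\h'$ into a locally free local $H'$-action, identifies a neighbourhood in $\Sigma$ with a neighbourhood of $e$ in $H'$, and takes the commuting invariant fields of the opposite handedness as the Killing fields; you instead run Frobenius on the graph distribution in $\Sigma\times H'$ to produce local isomorphisms of parallelisms with matching constant structure functions onto the model group, and then import the transitive right translations. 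Both are standard and correct; yours avoids the external citation at the cost of re-deriving what Palais' theorem packages. One point you should make explicit in Part 2: the extension $\hat{Y}(\hx.p):=(R_p)_*\hat{Y}_0(\hx)$ is well defined only if $\hat{Y}_0(\hx.s)=(R_s)_*\hat{Y}_0(\hx)$ whenever $\hx$ and $\hx.s$ both lie in $\mathcal{U}$ --- a nontrivial condition when $\s=\h\cap\p\neq 0$, since $\Sigma$ then meets the fibers of $\pi$ in positive-dimensional sets. This does follow from your hypotheses, because the $(\s,\omega)$-constant fields generate the right action of the connected subgroup $S<P$ with Lie algebra $\s$, and your $\mathcal{P}^{\Sigma}$-Killing field commutes with them since $\s\subset\h$; but it is exactly the point the paper isolates (via the neighbourhood $U_1=U'.V_S$) and it should not be left implicit.
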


\begin{proof}
1. Take $\Sigma$ a parallel submanifold of $\hat{M}$ such that $\omega(T_{\hx}\Sigma)=\h$. Assume $K(\hx) = K_0 \in \Hom(\Lambda^2(\g/\p),\g)$ all over $\Sigma$. The first step is to prove that $(\h,\omega)$-constant vector fields on $\Sigma$ form a (finite dimensional) Lie subalgebra of $\Gamma(T\Sigma)$. This fact comes from the general relation $K_{\hx}(X,Y) = [X,Y] - \omega_{\hx}([\tilde{X},\tilde{Y}])$ holding for every pair of $\omega$-constant vector fields. Indeed, if $X,Y \in \h$, since $K\equiv K_0$ on $\Sigma$, we must have for $\hx \in \Sigma$
\begin{equation*}
\omega_{\hx}([\tilde	{X},\tilde{Y}]) = [X,Y] - K_0(X,Y),
\end{equation*}
this equality meaning that $[\tilde{X},\tilde{Y}]$ is $\omega$-constant on $\Sigma$. Moreover, because $[\tilde{X},\tilde{Y}]$ is tangent to $\Sigma$, we get $\omega_{\hx}([\tilde	{X},\tilde{Y}]) \in \h$, meaning $[\tilde{X},\tilde{Y}]$ is itself $(\h,\omega)$-constant.

This observation encourages us to consider $[,]'$ a Lie bracket on $\h$ defined by $[X,Y]' = [X,Y]-K_0(X,Y)$. It naturally corresponds to that of $(\h,\omega)$-constant vector fields on $\Sigma$, and hence satisfies the axioms of brackets. We note $\h'$ for $(\h,[,]')$. Thus, we have exhibited an infinitesimal Lie group action $\iota : \h' \hookrightarrow \Gamma(T\Sigma)$. According to Palais' work on infinitesimal actions (see \cite{palais}, theorem XI, p. 58), if $H'$ is any Lie group with Lie algebra isomorphic to $\h'$, one has a local action of $H'$ on the manifold $\Sigma$ integrating the infinitesimal action $\iota$. This means that for all $\hx \in \Sigma$ and $X \in \Lie(H')$, $\ddt{t=0} \exp(tX).\hx = - \iota(X)_{\hx}$.

It is enough to prove that any point of $\Sigma$ admits a locally homogeneous neighbourhood. Take a point $\hx_0 \in \Sigma$. Because a non-zero $\omega$-constant vector field never vanishes, the local $H'$-action is everywhere locally free on $\Sigma$. Hence, there exists an open neighbourhood $U$ of $\hx_0$ in $\Sigma$ and an open neighbourhood $V$ of $e$ in $H'$ such that the orbital application at $\hx_0$ gives us an identification $\psi : V  \rightarrow U$. Assume $U$ and $V$ connected. Now, the point is that under this identification, $(\h,\omega)$-constant vector fields on $U$ correspond to right-invariant vector fields on $V$ (this is a simple consequence of the definition of the local $H'$-action). We can formulate this fact more formally saying that if $\omega^{H'}$ denotes the right Maurer-Cartan form on $H'$, we have $\psi^* (\omega|_{TU}) = \omega^{H'}|_{TV}$.

Finally, for all $X \in \h'$, note $\hat{X} = \psi_*( ^{L}\!X)$ the push-forward of the left-invariant vector field corresponding to $X$ on $V$. Then, because left-invariant and right-invariant vector fields commute, we see that for all $X$, $\hat{X}$ commute with every $(\h,\omega)$-constant vector fields on $U$. Thus, $\{\hat{X}, \ X \in \h' \}$ is a Lie algebra of Killing fields of $(U,\mathcal{P}^{\Sigma})$. Since they span $T_{\hx}\Sigma$ everywhere, any two points in $U$ can be joined by a finite sequence of flows of $\hat{X}$'s. Thus, $U$ is locally homogeneous and we have proved the first part of the proposition.

2. Now, assume moreover that $\h$ and $\p$ intersect transversally. This implies that when restricted to $\Sigma$, $\pi$ realizes a submersion since $T_{\hx}\pi(\omega_{\hx}^{-1}(\h)) = T_{\hx}\pi (\omega_{\hx}^{-1}(\h + \p)) = T_x M $ . Let $\s = \h \cap \p$. If $X_1,X_2 \in \s$, since the curvature form is horizontal, one has $[X_1,X_2] = [X_1, X_2]' \in \h$, proving $\s$ is a Lie subalgebra of $\p$. Choose $S<P$ a connected Lie subgroup of $P$ whose Lie algebra is isomorphic to $\s$. Note that because $(\s,\omega)$-constant fields generate flows which correspond to the right action on $\hat{M}$ of elements of $S$, $\Sigma$ is preserved by the right $S$-action. 

Let $\hx_0$ be a point in $\Sigma$ and $U$ its locally homogeneous neighbourhood exhibited previously. Fix $\t$ an arbitrary complement of $\s$ in $\h$ and choose $0 \in \mathcal{U} \subset \t$ a small neighbourhood of the origin such that $\exp_{\hx} : \mathcal{U} \rightarrow \Sigma$ is a diffeomorphism onto its image and $\exp_{\hx}(\mathcal{U})=:U' \subset U$. If $\mathcal{U}$ is small enough, there exists $V_S$ a convex neighbourhood of the identity in $S$ such that $U_1 = U'.V_S \subset U$. Since the fields $\hat{X}$ commute with $(\s,\omega)$-constant vectors fields, if two points $\hx \in U_1$ and $\hx.s \in U_1$ lie in the same fiber, then $\hat{X}_{\hx.s} = (R_s)_* \hat{X}_{\hx}$. Therefore, there is a well defined extension of the $\hat{X}$'s to the saturated open set $\pi^{-1}(\pi(U_1))$ by setting $\hat{X}_{\hx.p} = (R_p)_* \hat{X}_{\hx}$. We are left to prove that the extension of the $\hat{X}$'s to the saturated open set are in fact the lifts of local Killing fields of $(M,\mathcal{C})$.

We have seen that by construction, for all $\hx \in U$, $(\L_{\hat{X}}\omega)_{\hx}(v) = 0$ for all $v \in T_{\hx}\Sigma$. We claim that $\hat{X}$ satisfies 
\begin{enumerate}
\item for all $\hx \in \pi^{-1}(\pi(U_1))$, $(\L_{\hat{X}}\omega)_{\hx}$ vanishes on all vertical directions ;
\item the following equivariance relation for all $\hx \in \pi^{-1}(\pi(U_1))$ and $A \in \g$
\begin{equation*}
\left (\L_{\hat{X}}\omega\right )_{\hx.p}(\tilde{A)} = \Ad(p^{-1}) \left (\L_{\hat{X}}\omega \right )_{\hx}(\tilde{\Ad(p)A})=0.
\end{equation*}
\end{enumerate}
Those facts are actually  true for every vector field that commutes with the $P$-action. It comes from the following computations :

\begin{enumerate}

\item Since the curvature $2$-form is horizontal, we have for all $\hat{x} \in \pi^{-1}(\pi(U_1)) \text{ and } A \in \p, \ \Omega_{\hx}(\hat{X}_{\hx},\tilde{A}_{\hx}) = \d \omega_{\hx}(\hat{X}_{\hx},\tilde{A}_{\hx}) + [\omega(\hat{X}_{\hx}),A]=0$. Take any $\hx \in \pi^{-1}(\pi(U_1))$. We compute
\begin{align*}
(\L_{\hat{X}} \omega)_{\hx}(\tilde{A}) & = \d \omega_{\hx}(\hat{X},\tilde{A}) + (\tilde{A}.\omega(\hat{X}))_{\hx} \\ & = [A,\omega_{\hx}(\hat{X})] + \ddt{t=0} \omega(\hat{X})(\hx.\e^{tA})  \\
                                       & = [A,\omega_{\hx}(\hat{X})] + \ddt{t=0} ({R_{\e^{tA}}}^{\!\!*}\omega)_{\hx} (\hat{X}) \\ & = [A,\omega_{\hx}(\hat{X})] + \ddt{t=0} \Ad(\e^{-tA})\omega_{\hx}(\hat{X}) \\
                                       & = 0,
\end{align*}

\item For $A \in \g$ and small enough $t$
\begin{align*}
\left [(\phi_{\hat{X}}^t)^*\omega\right ]_{\hx.p}(\tilde{A}) & = \left [(\phi_{\hat{X}}^t)^*\omega\right ]_{\hx.p}\left (\left (R_p\right )_* \tilde{\Ad(p)A}_{\hx}\right ) \\
                                                             & = \left [ \left ( R_p \right )^* \left (\phi_{\hat{X}}^t \right )^*\omega \right ]_{\hx} \left (\tilde{\Ad(p)A}_{\hx}\right ) \\
                                                             & = \left [\left (\phi_{\hat{X}}^t\right )^*\left (R_p\right )^* \omega\right ]_{\hx}\left (\tilde{\Ad(p)A}_{\hx}\right ) \\
                                                             & = \Ad(p^{-1}) \left (\left (\phi_{\hat{X}}^t\right )^*\omega\right )_{\hx}\left (\tilde{\Ad(p)A}_{\hx}\right )
\end{align*}
Taking derivative at $t=0$, we get
\begin{equation*}
\left (\L_{\hat{X}}\omega\right )_{\hx.p}(\tilde{A)} = \Ad(p^{-1}) \left (\L_{\hat{X}}\omega \right )_{\hx}(\tilde{\Ad(p)A})=0.
\end{equation*}
\end{enumerate}

Finally, the fields $\hat{X}$, $X \in \h$ satisfy $\L_{\hat{X}} \omega = 0$ on $\pi^{-1}(\pi(U_1))$. According to Proposition \ref{prop:extension}, they are the lifts of Killing vector fields defined on a neighbourhood of $x_0$ in $M$. Thus, we have exhibited a Lie algebra $\h' \subset \Kill^{loc}(x_0)$ of local Killing fields defined on an open neighbourhood of $x_0$. Since $\pi|_{\Sigma}$ is a submersion, we have $\h'_x = \{X(x),X\in \h'\}=T_xM$ for all $x$ in this neighbourhood. Therefore, any connected component of $\pi(\Sigma)$ is locally homogeneous.
\end{proof}

\subsection{Proof of the generalized Singer's theorem}

We now prove Theorem \ref{thm:singer}. Note $d = \dim \p$ and assume that $(M,\mathcal{C})$ is $d$-infinitesimally homogeneous. Then the maps $\mathcal{D}^i K$ take values in a single orbit in $\mathcal{W}^i$, for $i \leq d$. Let us fix an arbitrary point $\hx_0$ in $\hat{M}$. For $0 \leq i \leq d$, let $S_i = \Stab_P(\mathcal{D}^iK(\hx_0))$ and $\s_i = \Lie(S_i)$. There is an integer $k(\hx_0) \leq d$ such that $\s _0 \varsupsetneq \s_1 \varsupsetneq \cdots \varsupsetneq \s_{k(\hx_0)} = \s_{k(\hx_0)+1}$. Remark that if $\hx$ is another point, for all $i \leq d$, $\Stab_P(\mathcal{D}^i K(\hx))$ and $\Stab_P(\mathcal{D}^iK(\hx_0))$ are conjugated in $P$. This implies $k(\hx)=k(\hx_0)=k$. We call $k$ \textit{the Singer integer of} $(M,\mathcal{C})$. Recall the following well-known result :

\begin{lem}
Let $(P,\pi,M,G)$ be a principal fibre bundle over a connected manifold $M$ with group $G$ and $H<G$ a closed subgroup. Then $P$ admits a reduction to an $H$-subbundle if and only if there exists a $G$-equivariant map $\Phi : P \rightarrow G/H$. In that case, for all $u \in P$, $\Phi^{-1}(\Phi(u))$ is a principal subbundle of $P$, whose structural group is conjugated to $H$.
\end{lem}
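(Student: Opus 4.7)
The plan is to construct mutually inverse correspondences between $H$-reductions of $P$ and $G$-equivariant maps $\Phi : P \to G/H$. I adopt the convention that $G$ acts on the right on $P$ and on the left on $G/H$, so equivariance reads $\Phi(u \cdot g) = g^{-1} \cdot \Phi(u)$.

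From a reduction to an equivariant map: if $Q \subset P$ is an $H$-subbundle, every $u \in P$ over $x \in M$ can be written $u = q \cdot g$ with $q \in Q_x$ and $g \in G$, uniquely up to replacing $(q,g)$ by $(q \cdot h, h^{-1} g)$ for some $h \in H$. Hence $\Phi(u) := g^{-1} H \in G/H$ is well defined, and
\begin{equation*}
\Phi(u \cdot g') = (g g')^{-1} H = g'^{-1} \cdot \Phi(u),
\end{equation*}
giving the required equivariance. Smoothness follows from the existence of local smooth sections of $Q \to M$.

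From an equivariant map back to a reduction: fix $u_0 \in P$, set $v_0 = \Phi(u_0)$, $K = \Stab_G(v_0)$ (a closed subgroup, conjugate to $H$ because $G$ acts transitively on $G/H$), and $Q := \Phi^{-1}(v_0)$. Equivariance shows that the right $G$-action on $P$ restricts to a right $K$-action on $Q$. Simple transitivity on each fiber $Q \cap P_x$ comes as follows: given $u_1, u_2 \in Q \cap P_x$, write $u_2 = u_1 \cdot g$ for the unique $g \in G$; then $v_0 = \Phi(u_2) = g^{-1} \cdot v_0$ forces $g \in K$. For surjectivity of $\pi|_Q$, take any $u \in P_x$, pick $g \in G$ with $g \cdot v_0 = \Phi(u)$, and note $u \cdot g \in Q \cap P_x$. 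Repeating the argument with any other value $gH \in G/H$ in place of $v_0$ shows that $\Phi^{-1}(gH)$ is preserved by the right action of $\Stab_G(gH) = g H g^{-1}$, which gives the conjugacy of structural groups in the last sentence of the statement.

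The step that requires care, and which I expect to be the main technical point, is the smoothness and local triviality of $Q \to M$. Over a trivializing neighbourhood $U \subset M$ of $P$ with section $s : U \to P$, equivariance forces $\Phi(s(x) \cdot g) = g^{-1} \cdot \sigma(x)$, where $\sigma := \Phi \circ s : U \to G/H$ is smooth. The fiber $Q \cap \pi^{-1}(U)$ identifies with the smooth submanifold $\{(x,g) \in U \times G : g^{-1} \cdot \sigma(x) = v_0\}$, and a local smooth section of the principal $H$-bundle $G \to G/H$ near $\sigma(x_0)$ yields a local section of $Q \to M$ near $x_0$. Once this is in hand, everything else is formal bookkeeping with the right $K$-action and the fiberwise transitivity already established.
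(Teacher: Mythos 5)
The paper states this lemma without proof (it is introduced with ``Recall the following well-known result''), so there is no in-paper argument to compare against. Your proof is correct and is the standard one: the two constructions $Q \mapsto \Phi$ and $\Phi \mapsto \Phi^{-1}(v_0)$ are set up with the right equivariance convention, the fiberwise simple transitivity and surjectivity arguments are sound, and you correctly isolate the only real technical point, namely that $\Phi^{-1}(v_0)$ is a smooth locally trivial subbundle, which you settle via a local section of $G \to G/H$. The only cosmetic remark is that taking $v_0 = eH$ (which lies in the image of $\Phi$ over every fiber, by equivariance and transitivity) produces an honest $H$-subbundle rather than one with group conjugate to $H$, though the conjugate version is exactly what the last sentence of the statement asks for.
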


For each $i$, let $\hat{N_i}$ be the connected component of the level set $\mathcal{D}^i K ^{-1} (\mathcal{D}^i K(\hx_0))$ containing $\hx_0$. Then $\hat{N_i}$ is a principal subbundle of $\hat{M}$ with group $(S_i)_0$, the identity component of $S_i$. Because of the definition of $k$, we must have $\hat{N_k} = \hat{N_{k+1}}$. Note $\hat{N}$ this subbundle and $S$ its group.

The definition of Singer's integer ensures that $\mathcal{D}^k K$ satisfies the hypotheses of Lemma \ref{lem:level_set}. Indeed, if $F = \mathcal{D}^k K$, then $\mathcal{D}^1 F = (K,\ldots,D^k K, D^1 K, \ldots, D^{k+1}K)$ must be constant over $\hat{N}$ since $\mathcal{D}^{k+1} K$ is itself constant over this subbundle. Thus, $\hat{N}$ is a parallel submanifold, which is also a principal subbundle of $\hat{M}$. Moreover, $K$ is constant over $\hat{N}$. Therefore, Proposition \ref{prop:constant_curvature} implies that $M$ is locally homogeneous.

\subsection{Proof of the dense orbit theorem}
Theorem \ref{thm:singer} gives us a direct proof of the dense orbit theorem for Cartan geometries, which we will now explain.

Let $(M,\mathcal{C})$ be a Cartan geometry, modeled on a space $\X = G/P$ of algebraic type, and such that the pseudo-group $\Aut^{loc}(M)$ admits a dense orbit $\mathcal{O}$ in $M$. Let $d = \dim P$. Let $\Phi = \mathcal{D}^{d} K$ be the $d$-th covariant derivative of the curvature. Since $\Aut^{loc}(M)$ acts on $\hat{M}$ preserving every derivative of the curvature, $\mathcal{O}'=\Phi(\pi^{-1}(\mathcal{O}))$ must be a single $P$-orbit in $\mathcal{W}^d$. Let $F$ be the closure of $\mathcal{O}'$ in $\mathcal{W}^d$. Then, the map $\Phi$ takes values in $F$. Recall that $\Ad_{\g}(P) \subset \GL(\g)$ is an algebraic subgroup. Since the $P$-action on $\mathcal{W}^d$ is algebraic, every $P$-orbit in $\mathcal{W}^d$ is open in its closure (the closure is taken relatively to the Hausdorff topology of $\mathcal{W}^d$). Thus, $\mathcal{O}'$ is open in $F$ and the open set $\Phi^{-1}(\mathcal{O}')$ projects on an open subset of $M$ containing $\mathcal{O}$, and thus has to be dense itself. Call it $U$. Therefore, the restricted Cartan geometry $(U,\pi^{-1}(U),\omega)$ satisfies the hypotheses of Theorem \ref{thm:singer}. Therefore, each connected component of $U$ is locally homogeneous. Take $x,y \in U$ and $V_x, V_y \subset U$ connected neighbourhoods of $x$ and $y$ respectively. These neighbourhoods are locally homogeneous. Since $\mathcal{O}$ is dense, it meets $V_x$ and $V_y$. Therefore, there exists a local automorphism sending $x$ to $y$.

\section{Gromov's Theorem on orbits of local isometries}
\label{s:gromov}
Our presentation of Gromov's result can be seen as a refinement of Singer's theorem. Indeed, in Singer's theorem we assumed that the maps $\mathcal{D}^r K : \hat{M} \rightarrow \mathcal{W}^r$ take values in a single $P$-orbit and concluded that there is only one $\Aut^{loc}$-orbit. In the general case, the principle will be to derive properties of $\Aut^{loc}$-orbits from the study of preimages $\mathcal{D}^rK^{-1}(P.w)$ of $P$-orbits in $\mathcal{W}^r$, for $r \leq \dim G$.

Let $m = \dim G$ and $\Phi = \mathcal{D}^m K$, the $m$-th covariant derivative of the curvature. The general idea used to prove Theorem \ref{thm:isloc_orbit} is that there exists an open dense subset $\Omega \subset M$, that splits into a union of $\Aut^{loc}$-invariant open sets $\Omega = \Omega_1 \cup \cdots \cup \Omega_k$ such that for all $i$, the restriction $\Phi : \pi^{-1}(\Omega_i) \rightarrow \mathcal{W}^r$ has constant rank and every level set of $\Phi$ in $\pi^{-1}(\Omega_i)$ is projected by $\pi$ onto an $\Aut^{loc}$-orbit in $\Omega_i$.

In section \ref{ss:parallelism}, we focus on the elementary case of a Cartan geometry associated to an absolute parallelism on $M$. In this situation, the proof of Theorem \ref{thm:isloc_orbit} will be easier once we will have exhibited $\Omega$ and established Theorem \ref{thm:frobenius} : orbits of local automorphisms will essentially be level sets of a map with constant rank. Nonetheless, it will be helpful to treat this case since in the general case, that will be exposed in section \ref{ss:generalcase}, the idea is to deduce regularity properties of the orbits from the fact that they are the projections of the $\Aut^{loc}$-orbits of a parallelism on $\pi^{-1}(\Omega)$.

\subsection{Killing generators and integrability domain}
\label{ss:gromov_definition}
Let $(M,\mathcal{C})$ be a manifold endowed with a Cartan geometry, with model space $\X=G/P$ satisfying the standard hypotheses. Recall that $V = \Hom(\Lambda^2(\g/\p),\g)$ denotes the target space of the curvature map $K$, and that $\mathcal{D}^r K : \hat{M} \rightarrow \mathcal{W}^r$ denotes the $r$-th covariant derivative of the curvature map.

\begin{dfn}[Killing generator]
For any $A \in \g$, we define a linear morphism
\begin{equation*}
\lefthalfcup A : \Hom(\otimes^r \g, V) \rightarrow \Hom(\otimes^{r-1} \g, V),
\end{equation*}
given by $\forall W \in \Hom(\otimes^r \g, V), \ \forall X_1, \ldots,X_{r-1} \in \g$,
\begin{equation*}
(W \! \lefthalfcup A) (X_1,\ldots,X_{r-1}) = W(A,X_1,\ldots,X_{r-1}).
\end{equation*}
We extend $\lefthalfcup A$ to a linear map $\mathcal{W}^r \rightarrow \mathcal{W}^{r-1}$ defined by
\begin{equation*}
(W_0,\ldots ,W_r) \lefthalfcup A = (W_1 \lefthalfcup A, \ldots , W_r \lefthalfcup A).
\end{equation*}
For $r \geq 1$, $A$ will be a \textit{Killing generator of order} $r$ at $\hx$ if 
\begin{equation*}
\mathcal{D}^r K(\hx) \! \lefthalfcup A = 0.
\end{equation*}
We note $\Kill^r(\hx) \subset \g$ the set of all Killing generators of order $r$ at $\hx$ and $\Kill^{\infty}(\hx) = \bigcap_{r=1}^{\infty} \Kill^r(\hx)$.
\end{dfn}

\begin{rem}
If $X$ is a local Killing field of $(M,\mathcal{C})$ and if $\hat{X}$ is its lift, then for all $x$ where $X$ is defined and for all $\hx \in \pi^{-1}(x)$, we have $\hat{X}(\hx) \in \Kill^{\infty}(\hx)$.
\end{rem}

For all $ 1 \leq r \leq \infty$, $\Kill^r(\hx) = \omega_{\hx}(\Ker T_{\hx} (\mathcal{D}^{r-1} K))$ and for all $p \in P$, we have $\Kill^r(\hx.p) = \Ad(p^{-1})\Kill^r(\hx)$. Therefore, if $x \in M$, the dimension $\dim \Kill^r$ is constant over the fiber $\pi^{-1}(x)$.

\begin{dfn}
If $1 \leq r \leq \infty$, note $k_r(x) = \dim \Kill^r(\hx)$ for any $\hx$ lying over $x$. We simply note $k(x) = k_{\infty}(x)$.
\end{dfn}

\begin{lem}
For $r \geq 1$, the maps $k_r,k : M \rightarrow \N$ are upper semi-continuous.
\end{lem}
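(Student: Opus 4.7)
The plan is to reduce the statement to two standard facts: the rank of a smooth map is lower semi-continuous, and an infimum of upper semi-continuous functions is upper semi-continuous.

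First I would handle finite order $r$. The key observation is the formula $\Kill^r(\hx) = \omega_{\hx}(\Ker T_{\hx}(\mathcal{D}^{r-1}K))$ noted just before the lemma. Since $\omega_{\hx}$ is a linear isomorphism, this gives
\begin{equation*}
k_r(x) = \dim \hat{M} - \mathrm{rank}_{\hx}(\mathcal{D}^{r-1}K)
\end{equation*}
for any $\hx \in \pi^{-1}(x)$. The rank of a smooth map between manifolds is a lower semi-continuous function of the base point (on a neighbourhood where enough minors are nonzero, the rank can only increase). Hence $\hx \mapsto k_r(\pi(\hx))$ is upper semi-continuous on $\hat{M}$. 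To descend this to $M$, fix $x_0 \in M$ and choose $\hx_0$ over it; by u.s.c.\ on $\hat{M}$ there is an open neighbourhood $V$ of $\hx_0$ in $\hat{M}$ on which $k_r(\pi(\hx)) \leq k_r(x_0)$. Since $\pi$ is open, $\pi(V)$ is a neighbourhood of $x_0$ in $M$ on which $k_r \leq k_r(x_0)$, proving that $k_r$ is u.s.c.\ on $M$.

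For $k = k_\infty$, the observation is that the inclusions $\Kill^{r+1}(\hx) \subset \Kill^{r}(\hx)$ make $\{k_r(x)\}_r$ a decreasing sequence of non-negative integers which stabilizes at $k(x)$; equivalently,
\begin{equation*}
k(x) = \inf_{r \geq 1} k_r(x).
\end{equation*}
An infimum of u.s.c.\ functions is always u.s.c.\ (for each $r$ and each $x$ near $x_0$ we have $k(x) \leq k_r(x) \leq k_r(x_0)$, and then we take the infimum over $r$ on the right-hand side). This yields the claim for $k$.

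There is no real obstacle here; the only subtlety is ensuring the descent from $\hat{M}$ to $M$ is legitimate, which uses openness of $\pi$ and the fibre-constancy of $k_r$ established in the paragraph preceding the lemma. No uniform stabilization of $r \mapsto \Kill^r(\hx)$ in $\hx$ is needed, because the argument for $k$ goes through the purely pointwise identity $k = \inf_r k_r$.
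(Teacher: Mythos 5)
Your proof is correct and follows essentially the same route as the paper's: upper semi-continuity of $k_r$ from the lower semi-continuity of the rank of $\mathcal{D}^{r-1}K$ (i.e.\ $\Kill^r(\hx)$ is the kernel of a continuously varying linear map), and upper semi-continuity of $k$ from the fact that the decreasing sequence $(k_r)$ converges pointwise to $k$. Your extra care about descending from $\hat{M}$ to $M$ via openness of $\pi$ and fibre-constancy is a harmless elaboration of a step the paper leaves implicit.
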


\begin{proof}
For $r \geq 1$, $\Kill^r(\hx)$ is the kernel of a linear map depending continuously on $\hx$. Therefore $k_r$ is upper semi-continuous. The same is true for $k$ since $(k_r)$ decreases and converges pointwise to $k$.
\end{proof}

\begin{dfn}[Integrability domain]
We define the \textit{integrability domain} of $(M,\mathcal{C})$ to be the subset $\Int(M,\mathcal{C}) = \{x \in M \ | \ k_1, \ldots , k_{m+2} \text{ are locally constant at } x\}$. If there is no ambiguity on which geometry we are considering, we will also note $M^{int}$ for $\Int(M,\mathcal{C})$.
\end{dfn}

Since the maps $k_r$ are upper semi-continuous, the integrability domain is an open dense subset of $M$. Moreover, since for all $f \in \Aut^{loc}(M,\mathcal{C})$ and $r \geq 1$, $k_r \circ f = k_r$, $\Int(M,\mathcal{C})$ is $\Aut^{loc}$-invariant.

\subsection{Gromov's results for absolute parallelisms}
\label{ss:parallelism}
In this section, we prove Theorem \ref{thm:frobenius} and Theorem \ref{thm:isloc_orbit} in the special case where the Cartan geometry is simply the data of an absolute parallelism on the manifold $M$. The proof of Theorem \ref{thm:frobenius} consists in exhibiting a local Killing field and we do it by building its graph using standard techniques of integration of an involutive distribution.

The geometric structure defined by a parallelism is an elementary case of Cartan geometry. Let us explain it briefly.

Consider $M$ an $n$-dimensional manifold endowed with an abolute parallelism $\mathcal{P}$, \textit{ie} a set of $n$ vector fields $(X_1,\ldots,X_n)$ on $M$ everywhere linearly independent. Then, let $\omega = (\omega_1,\ldots,\omega_n)$ be the dual parallelism of the cotangent bundle. The data of $\mathcal{P}$ is equivalent to the one of the $1$-form $\omega$, that takes values in $\R^n$. Let $\hat{M} = M$ and $\pi : \hat{M} \rightarrow M$ be the trivial principal fibration with group $\{0\}$, $G$ be the additive group $\R^n$ and $P = \{0\}$. Then, the $1$-form $\omega$ obviously satisfies the axioms of a Cartan connection on this bundle. We will say that the Cartan geometry $(M,\hat{M},\omega)$ built above is \textit{the} Cartan geometry associated to the parallelism $\mathcal{P}$. Moreover, local automorphisms of this Cartan geometry are exactly the local automorphisms of $(M,\mathcal{P})$, \textit{ie} local diffeomorphisms of $M$ commuting with the fields $X_1, \ldots, X_n$.

Conversely, if $(M,\hat{M},\omega)$ is a Cartan geometry with model space $G/P$ with $G = \R^n$ and $P=\{0\}$, then we identify $\hat{M}$ with $M$, $\omega$ will be viewed as a parallelism of $T^*M$ and we associate to it the dual parallelism of $TM$.

Since the total space of this fibration coincides with the base, we omit in this section the notation with ``hats''.

From now on, fix $(M,\mathcal{P})$ an $n$-dimensional manifold together with a parallelism defined by the vector fields $(X_1,\ldots,X_n)$, and note $(M,\hat{M},\omega)$ its associated Cartan geometry. All of the definitions and properties we have established above make sense in this geometric context (curvature, domain of integrability, Killing generators ..). Recall that if $X \in \R^n$, we note $\tilde{X} = \omega^{-1}(X)$ the $\omega$-constant vector field associated to $X$. Those fields are linear combinations of the $X_i$'s. At last, there exists maps $\gamma_{ij}^k$ such that : 
\begin{equation*}
\forall 1\leq i,j\leq n,\: [X_i,X_j] = \sum_k \gamma_{ij}^k X_k.
\end{equation*}
Therefore, the curvature map $K$ is defined for all $i,j$ by
\begin{equation*}
K(x)(e_i,e_j) =[e_i,e_j] - \omega_x([X_i,X_j]) = -(\gamma_{ij}^1(x),\ldots,\gamma_{ij}^n(x)).
\end{equation*}

\subsubsection{Integrating Killing generators}
\label{sss:frobenius_parallelism}

The problem is the following : we are given $A \in \Kill^{n+1}(x_0)$ a Killing generator of order $n+1$ at a point $x_0 \in \Int(M,\mathcal{P})$, and we want to find a vector field defined on a neighbourhood of $x_0$ that commutes with the $n$ vector fields of $\mathcal{P}$. Since this problem is local, without loss of generality, we assume that $M=U \subset \R^n$ is a connected open subset such that $\Int(M,\mathcal{P})=M$, or equivalently such that the maps $k_1, \ldots , k_{n+2}$ are constant over $M$. The parallelism $\mathcal{P}$ provides us a particular trivialisation of $TM$ which is the map $\phi : (x,v)\in TM \mapsto (x,\omega_x(v)) \in M \times \R^n$. We have chosen to make all the computations through this trivialisation. Thus, a vector field will be seen as a smooth map $M \rightarrow \R^n$. If $f$ is a local diffeomorphism, we note $f_* : M \times \R^n \rightarrow M \times \R^n$ the action of its differential map, read in the trivialisation $\phi$.

To exhibit the local Killing field of Theorem \ref{thm:frobenius}, we will build its graph in $M \times \R^n$. Identify canonically $T_{(x,u)}(M \times \R^n)$ with $T_x M \times \R^n$, note $(\partial_1,\ldots,\partial_n)$ the standard basis of the second factor $\R^n$ and consider $\Delta$ the $n$-dimensional distribution on $M \times \R^n$ defined by 
\begin{equation*}
\Delta_{(x,u)} = \Vect(X_i - \sum_{jk} u_j\gamma_{ij}^k \, \partial_k \, , \: 1\leq i \leq n).
\end{equation*}
The introduction of this object is motivated by the following lemma.

\begin{lem}
\label{lem:killing_field}
Let $X$ be a vector field on $M$. Then $X$ is a Killing field if and only if its graph in $M \times \R^n$ integrates the distribution $\Delta$.
\end{lem}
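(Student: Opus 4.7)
The proof is essentially a direct computation: one writes out both sides explicitly in the parallelism basis and observes that the Killing condition and the tangency condition translate to the same system of first-order equations.

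The plan is to first unpack the geometric meaning of the ``graph integrates $\Delta$'' condition. Given $X \in \Gamma(TM)$, set $u(x) = \omega_x(X(x)) \in \R^n$, so $X = \sum_j u_j X_j$, and its graph (through the trivialization $\phi$) is the submanifold $\Gamma_X = \{(x, u(x)) : x \in M\} \subset M \times \R^n$. Its tangent space at $(x,u(x))$ is spanned by the vectors $(X_i(x),\, du_x(X_i(x)))$, $i=1,\ldots,n$, where $du_x(X_i(x)) = \sum_k (X_i \cdot u_k)(x)\, \partial_k$. Comparing with the generators of $\Delta_{(x,u(x))}$ given in the statement, $\Gamma_X$ integrates $\Delta$ if and only if for all $i,k$,
\begin{equation*}
(X_i \cdot u_k)(x) \;=\; -\sum_{j} u_j(x)\,\gamma_{ij}^{k}(x).
\end{equation*}

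Second, I would compute $[X_i, X]$ directly from $X = \sum_j u_j X_j$ using the Leibniz rule and the structure relations $[X_i,X_j] = \sum_k \gamma_{ij}^k X_k$. This gives
\begin{equation*}
[X_i,X] \;=\; \sum_{k}\Bigl((X_i \cdot u_k) + \sum_j u_j\, \gamma_{ij}^{k}\Bigr) X_k.
\end{equation*}
Since the $X_k$'s form a frame, $[X_i, X]=0$ for every $i$ is equivalent to the same system of equations as above. Recalling that for a parallelism a (local) Killing field is precisely a vector field commuting with every $X_i$, this gives both directions of the equivalence at once.

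The only care needed is bookkeeping: one has to be sure that in the chosen trivialization $\phi$ the ``second factor'' derivative of the graph equals $X_i \cdot u_k$ with the correct sign, and that the sign in the definition of $\Delta$ matches the sign of $\sum_j u_j \gamma_{ij}^k$ produced by the bracket. Beyond this sign-tracking there is no real obstacle, and no input from infinitesimal homogeneity or the integrability domain is needed here — Lemma \ref{lem:killing_field} is a purely pointwise translation between the PDE ``$[X_i,X]=0$ for all $i$'' and the geometric statement ``$\Gamma_X$ is tangent to $\Delta$ everywhere''. The lemma is what later allows one to reduce the construction of a local Killing field extending a Killing generator to a Frobenius-type integration of $\Delta$.
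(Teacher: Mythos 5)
Your proof is correct and follows essentially the same route as the paper: both translate the tangency condition $T_{(x,u(x))}\Gamma_X=\Delta_{(x,u(x))}$ into the system $(X_i\cdot u_k)+\sum_j u_j\gamma_{ij}^k=0$ and identify it with the expansion of $[X_i,\sum_j u_jX_j]=0$ in the frame $(X_k)$. Your explicit remark on sign bookkeeping is sound (and indeed the paper's displayed equation in the converse direction drops a minus sign that your version gets right).
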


\begin{proof}
Let $\Gamma_X = \{(x,u_1(x),\ldots,u_n(x))=(x,X(x)), \ x \in M \}$ be the graph of $X$, read in the trivialisation $\phi$. Then, for all $x\in M$, the $n$-dimensional tangent space $T_{(x,X(x))}\Gamma_X$ is spanned by the $X_i(x) + \sum_k (X_i.u_k)(x)\partial_k \in T_x M \times \R^n$, $1 \leq i \leq n$. 

When we develop the equations $[X_i,\sum_j u_jX_j](x) = 0$ for $1 \leq i \leq n$, we get that $X$ is a Killing field if and only if for all $1 \leq i,k \leq n$
\begin{equation*}
(X_i.u_k)(x) + \sum_j u_j(x) \gamma_{ij}^k(x)=0.
\end{equation*}
Thus, if $X$ is a Killing field, the tangent space $T_{(x,X(x))}\Gamma_X$ is in fact spanned by the
\begin{equation*}
X_i(x) - \sum_{jk} u_j(x) \gamma_{ij}^k(x) \partial_k, \ 1 \leq i \leq n,
\end{equation*}
meaning $T_{(x,X(x))}\Gamma_X = \Delta_{(x,X(x))}$. Conversely, if we assume $T_{(x,X(x))}\Gamma_X = \Delta_{(x,X(x))}$, then for all $i$, $T_{(x,X(x))}\Gamma_X$ contains $X_i(x) - \sum_{jk} u_j(x) \gamma_{ij}^k(x) \partial_k$. Therefore, we must have for all $i,k$, $(X_i.u_k)(x) = \sum_j u_j(x) \gamma_{ij}^k(x)$.
\end{proof}

\begin{lem}
\label{lem:melnick}
Let $x \in \Int(M,\mathcal{P})$. Let $X \in \R^n$ in the injectivity domain of $\exp_x$, $A \in \Kill^s(x)$ and $\gamma(t) = \exp_x(tX)=\phi_{\tilde{X}}^t(x)$, $t \in [-1,1]$. Assume that for some $s \geq 1$, the maps $k_s$ and $k_{s+1}$ are constant and equal in a neighbourhood of  $\gamma$. Then, we have for all $t$
\begin{equation*}
\left ( \phi_{\tilde{X}}^t \right )_* \! A \in \Kill^s (\gamma(t)).
\end{equation*}
\end{lem}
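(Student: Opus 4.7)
My approach is to set up a homogeneous linear ODE governing how far $A(t) := \omega_{\gamma(t)}\bigl((\phi_{\tilde X}^t)_*\omega_x^{-1}(A)\bigr) \in \g$ fails to be a Killing generator of order $s$ at $\gamma(t)$, and then conclude by uniqueness with zero initial data. As a preliminary step, I first derive the ODE satisfied by $A(t)$ itself: combining $\frac{\d}{\d t}(\phi_{\tilde X}^t)^*\omega = (\phi_{\tilde X}^t)^*\L_{\tilde X}\omega$ with $\L_{\tilde X}\omega(\tilde B) = -\omega([\tilde X,\tilde B])$ (valid for $\omega$-constant $\tilde B$) and the structural identity $\omega([\tilde X,\tilde B]) = [X,B]-K(\cdot)(X,B)$, the fact that $\g = \R^n$ is abelian in this parallelism setting yields
\[
A'(t) = K(\gamma(t))(X,A(t)).
\]

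For $1 \le i \le s+1$ I introduce the obstructions $h_i(t) := D^i K(\gamma(t))\lefthalfcup A(t) \in \Hom(\otimes^{i-1}\g,V)$. By definition of $\Kill^s$, the conclusion is equivalent to $h_1(t) = \cdots = h_s(t) = 0$, and the hypothesis $k_s(x) = k_{s+1}(x)$ forces $\Kill^s(x) = \Kill^{s+1}(x)$, so all initial values $h_i(0)$, $1 \le i \le s+1$, vanish. Differentiating $h_i$ in $t$, substituting $A'(t) = K(\gamma(t))(X,A(t))$, and symmetrizing $D^{i+1}K$ in its first two arguments via the commutator identity $[\tilde X,\tilde A]_y = -\omega_y^{-1}\bigl(K(y)(X,A)\bigr)$, the correction terms cancel to give the clean recursion
\[
h_i'(t)(Y_1,\ldots,Y_{i-1}) = h_{i+1}(t)(X,Y_1,\ldots,Y_{i-1}), \qquad 1 \le i \le s.
\]

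To close this \emph{a priori} open-ended chain, I invoke the hypothesis on $k_s, k_{s+1}$. The pointwise equality $\Kill^s(y) = \Kill^{s+1}(y)$ in a neighbourhood of $\gamma$ is equivalent to the linear map $A \mapsto D^{s+1}K(y)\lefthalfcup A$ factoring through $A \mapsto \mathcal{D}^s K(y)\lefthalfcup A$; the constancy of $k_s$ along $\gamma$ guarantees that the image of the latter varies as a smooth subbundle, so the factor $\sigma(y)$ can be chosen smooth in $y$ and $h_{s+1}(t) = \sigma(\gamma(t))\bigl(h_1(t),\ldots,h_s(t)\bigr)$. Inserting this into the equation for $h_s'$, the tuple $(h_1,\ldots,h_s)$ satisfies a homogeneous linear ODE with smooth coefficients on $[-1,1]$ with vanishing initial data; uniqueness then yields $h_i \equiv 0$, i.e.\ $A(t) \in \Kill^s(\gamma(t))$.

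The main obstacle is the final closure step: the recursion produced in the middle step is open-ended, and to close it one needs both the linear-algebraic inclusion $\Kill^s \subset \Kill^{s+1}$ (giving the factorization pointwise) and the constancy of $k_s$ (promoting it to a smooth factorization along $\gamma$). Both are exactly what is packaged in the double constancy hypothesis of the lemma. By contrast, the cancellation in the previous step---between the symmetrization defect of $D^{i+1}K$ in its first two arguments and the nontrivial drift $A'(t) = K(\gamma(t))(X,A(t))$---is a routine computation once the correct obstructions $h_i$ have been identified.
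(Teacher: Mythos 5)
Your proposal is correct and follows essentially the same route as the paper's Appendix proof: the obstructions $h_i(t)=D^iK(\gamma(t))\lefthalfcup A_t$, the recursion $h_i'=h_{i+1}(X,\cdot)$, closure of the system via the constant-rank factorization of $D^{s+1}K\lefthalfcup\cdot$ through $\mathcal{D}^sK\lefthalfcup\cdot$, and uniqueness for a homogeneous linear ODE with zero initial data. The only difference is cosmetic: you obtain the recursion by computing $A'(t)=K(\gamma(t))(X,A(t))$ and cancelling it against the symmetrization defect of $D^{i+1}K$, whereas the paper packages the same cancellation into the auxiliary field $\chech{A}$ commuting with $\tilde{X}$ along $\gamma$.
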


\begin{proof}
This result corresponds to Proposition 6.2 of \cite{melnick}, with a slight modification. For the sake of completeness, we give a detailed proof in Appendix \ref{appendix}.
\end{proof}

We can now begin the proof. Let $x_0 \in M$ and $A \in \Kill^{n+1}(x_0)$. There exists an integer $s_0 \leq n+1$ such that
\begin{equation*}
k_1(x_0) > \cdots >k_{s_0}(x_0)=k_{s_0 +1}(x_0).
\end{equation*}
Since the maps $k_1, \ldots ,k_{n+2}$ are constant on $M$, the maps $k_{s_0}$  and $k_{s_0+1}$ are constant and equal. We define
\begin{equation*}
\Sigma = \{(x,v) \: | \: v \in \Kill^{s_0}(x) \} \subset M \times \R^n.
\end{equation*}
Note that for an arbitrary local Killing field $X$ on $M$, we must have $X(x) \in \Kill^{\infty}(x)$ everywhere $X$ is defined. Therefore the graph we are looking for has to be included in $\Sigma$ and must contain the point $(x_0,A)$.

\begin{lem}
\label{lem:sigma_manifold}
$\Sigma$ is a submanifold of $M \times \R^n$
\end{lem}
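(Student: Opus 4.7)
The plan is to realise $\Sigma$ as the kernel of a smooth, constant-rank vector-bundle morphism on the trivial rank-$n$ bundle $M\times\R^n\to M$, and then invoke the standard fact that such a kernel is automatically a smooth vector subbundle, hence a submanifold.

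First I consider the smooth map
\[
L : M\times \R^n \longrightarrow M\times \mathcal{W}^{s_0-1},\qquad L(x,v) = \bigl(x,\; \mathcal{D}^{s_0}K(x)\lefthalfcup v\bigr).
\]
This is indeed smooth: $\mathcal{D}^{s_0}K$ depends smoothly on $x\in M$, and the contraction $w\mapsto w\lefthalfcup v$ is bilinear in $(w,v)$, so the composition is smooth in $(x,v)$. Moreover $L$ is a vector-bundle morphism over $\id_M$, since for each fixed $x$ the partial map $L_x : v\mapsto \mathcal{D}^{s_0}K(x)\lefthalfcup v$ is linear. By definition of $\Kill^{s_0}$, one has $\Ker L_x = \Kill^{s_0}(x)$ for every $x$, and therefore
\[
\Sigma \;=\; L^{-1}(M\times\{0\}) \;=\; \Ker L.
\]

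Second, I use the standing assumption that $M=\Int(M,\mathcal{P})$, so that in particular $k_{s_0}$ is constant on $M$. Since $\dim\Ker L_x = k_{s_0}(x)$ is constant, the rank of $L_x$ is constantly equal to $n-k_{s_0}$. Thus $L$ is a bundle morphism of \emph{constant rank}.

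Third, I conclude by the following elementary fact: if $L$ is a smooth bundle morphism of constant rank between two smooth vector bundles over $M$, then $\Ker L$ is a smooth vector subbundle of the source bundle. Concretely, near any $x_0\in M$ one picks a basis $(v_1,\dots,v_{k_{s_0}})$ of $\Kill^{s_0}(x_0)$ and a complement $(v_{k_{s_0}+1},\dots,v_n)$ of $\R^n$; the images $L_x(v_{k_{s_0}+1}),\dots,L_x(v_n)$ remain linearly independent for $x$ near $x_0$, and by the constant-rank hypothesis they continue to span the image of $L_x$; solving the linear system then produces $k_{s_0}$ smooth local sections $s_1,\dots,s_{k_{s_0}}$ of $M\times\R^n$ whose values at each $x$ form a basis of $\Kill^{s_0}(x)$. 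The map $(x,t_1,\dots,t_{k_{s_0}})\mapsto (x,\sum_i t_i s_i(x))$ is then a local parametrisation of $\Sigma$ around $(x_0,v)$ for any $v\in\Kill^{s_0}(x_0)$, showing that $\Sigma$ is a smooth submanifold of $M\times\R^n$ of dimension $n+k_{s_0}$.

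There is no real obstacle here: the only content is the observation that the Killing-generator condition is linear in $v$ with smoothly varying coefficients and constant kernel dimension, after which the subbundle structure is automatic.
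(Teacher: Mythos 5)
Your proof is correct, but it takes a genuinely different route from the paper's. The paper deduces the smoothness of $x \mapsto \Kill^{s_0}(x)$ from Lemma \ref{lem:melnick}: on an exponential neighbourhood of a point $x$, the equality of dimensions forces $\Kill^{s_0}(\exp_x(Y)) = (\phi_{\tilde{Y}}^1)_*\Kill^{s_0}(x)$, so the fibres of $\Sigma$ are obtained from a fixed one by pushing forward along flows of $\omega$-constant fields. You instead realise $\Sigma$ as the kernel of the bundle morphism $(x,v) \mapsto \mathcal{D}^{s_0}K(x)\lefthalfcup v$, which is linear in $v$ with smoothly varying coefficients and has constant rank because $k_{s_0}$ is constant on $M = \Int(M,\mathcal{P})$; the kernel of a constant-rank morphism between vector bundles is a smooth subbundle, and your local construction of spanning sections is the standard correct argument. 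Your route is more elementary and self-contained: it uses only the constancy of $k_{s_0}$, not the equality $k_{s_0}=k_{s_0+1}$ nor the transport Lemma \ref{lem:melnick}. What the paper's approach buys is the extra information that the fibres of $\Sigma$ are permuted by the maps $(\phi_{\tilde{Y}}^t)_*$, which is precisely what is reused immediately afterwards in Lemma \ref{lem:delta_tangent} to show that the distribution $\Delta$ is tangent to $\Sigma$; following your route, that later step would still require Lemma \ref{lem:melnick}.
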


\begin{proof}
Take $x \in M$ and consider $B=B(0,\delta) \subset \R^n$ a ball (for an arbitrary norm) such that $\exp_x$ is defined and injective on $B$. If $Y \in B$ and $y = \exp_x(Y)$, since $\Kill^{s_0}(y)$ and  $\Kill^{s_0}(x)$ have same dimension, Lemma \ref{lem:melnick} implies $\Kill^{s_0}(y) = (\phi_{\tilde{Y}}^1)_* \Kill^{s_0}(x)$. Thus, $\Kill^{s_0}(x)$ depends smoothly of $x$ and $\Sigma$ is indeed a submanifold of $M \times \R^n$.
\end{proof}

We now prove that $\Delta$ can be restricted into a distribution on $\Sigma$, and that this restricted distribution is involutive. From now on for the reader's convenience, we note $V_i =V_i(x,u_1,\ldots,u_n)= \sum_{jk} u_j\gamma_{ij}^k \, \partial_k$.

\begin{lem}
\label{lem:delta_tangent}
The distribution $\Delta$ is everywhere tangent to $\Sigma$.
\end{lem}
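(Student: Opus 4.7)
The strategy is to identify the generators $X_i - V_i$ of $\Delta_{(x,u)}$ as tangent vectors to curves that stay in $\Sigma$, using Lemma \ref{lem:melnick}.

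Fix $(x,u) \in \Sigma$, so $u \in \Kill^{s_0}(x)$, and fix some $i \in \{1,\ldots,n\}$. Consider the curve
\begin{equation*}
\gamma_i(t) = \bigl(\phi_{X_i}^t(x),\, c(t)\bigr), \quad c(t) = \omega_{\phi_{X_i}^t(x)}\bigl((\phi_{X_i}^t)_* A\bigr),
\end{equation*}
where $A \in T_x M$ is the tangent vector at $x$ with trivialisation coordinate $u$. By hypothesis $k_{s_0}$ and $k_{s_0+1}$ are constant and equal on $M$, so Lemma \ref{lem:melnick} (applied to $X = e_i$, $s = s_0$) yields $(\phi^t_{X_i})_* A \in \Kill^{s_0}(\phi^t_{X_i}(x))$. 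Thus $\gamma_i(t) \in \Sigma$ for all small $t$, and in particular $\dot\gamma_i(0) \in T_{(x,u)}\Sigma$.

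The plan is then to compute $\dot\gamma_i(0)$ and to verify it equals the $i$-th generator $X_i(x) - V_i(x,u)$ of $\Delta_{(x,u)}$. The base component is clearly $X_i(x)$. For the fibre component, the key observation is that $c(t) = ((\phi^t_{X_i})^*\omega)_x(A)$, so $\dot c(0) = (\L_{X_i}\omega)_x(A)$. Using the duality $\omega_k(X_j) = \delta_{kj}$ together with the Cartan formula, one gets
\begin{equation*}
(\L_{X_i}\omega_k)(X_j) = X_i.\omega_k(X_j) - \omega_k([X_i,X_j]) = -\gamma_{ij}^k,
\end{equation*}
so evaluating on $A$ yields $\dot c(0)_k = -\sum_j u_j \gamma_{ij}^k(x)$. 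Hence $\dot\gamma_i(0) = X_i(x) - \sum_{jk} u_j \gamma_{ij}^k(x)\partial_k = X_i(x) - V_i(x,u)$, which is exactly the $i$-th generator of $\Delta_{(x,u)}$.

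Since the family $\{X_i - V_i\}_{1 \leq i \leq n}$ spans $\Delta_{(x,u)}$ and each vector has been realised as a tangent vector to a curve lying in $\Sigma$, we conclude $\Delta_{(x,u)} \subset T_{(x,u)}\Sigma$, as required. The only non-routine ingredient is Lemma \ref{lem:melnick}, which guarantees that pushing forward a Killing generator by the flow of an $\omega$-constant field preserves the space of Killing generators of the same order; the rest is a direct Lie derivative computation on the dual parallelism. I do not anticipate any further obstacle.
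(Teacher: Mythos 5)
Your proof is correct and follows essentially the same route as the paper: both arguments use Lemma \ref{lem:melnick} to see that the curve $t \mapsto (\phi_{X_i}^t)_*(x,u)$ stays in $\Sigma$, and then identify its velocity at $t=0$ with $X_i - V_i$ via the Lie derivative $(\L_{X_i}\omega)_x$ (the paper phrases the final step through $(\d\omega)_x(X_i,X_j) = K(x)(e_i,e_j)$, which is the same computation). No gaps.
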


\begin{proof}
According to Lemma \ref{lem:melnick}, $(\phi_{X_i}^t)_* : M \times \R^n \rightarrow M \times \R^n$ preserves $\Sigma$ if $t$ is small enough. Take $(x,u_1,\ldots,u_n)=(x,u) \in \Sigma$ and let $c(t) = (\phi_{X_i}^t)_* (x,u)$. Then $\ddt{t=0} c(t) \in T_{(x,u)}\Sigma$. Let us compute this tangent vector. According to the definition of $\phi$, we have
\begin{equation*}
\ddt{t=0} c(t) = (X_i(x),\sum_j u_j \ddt{t=0} \omega((T_x \phi_{X_i}^t) X_j(x))).
\end{equation*}
Moreover,		
\begin{equation*}
\ddt{t=0} \omega((T_x \phi_{X_i}^t) X_j(x)) = \left ( \L_{X_i}\omega \right )_x(X_j(x)) = (\d \omega)_x(X_i,X_j),
\end{equation*}
since $\omega(X_i)$ is constant equal to $e_i$. The curvature form of the Cartan geometry associated to the parallelism $\mathcal{P}$ is $\Omega = \d \omega + \frac{1}{2} [\omega,\omega] = \d \omega$ since the Lie algebra $\g=\R^n$ is abelian. Therefore, $(\d \omega)_x(X_i,X_j) = \Omega_x(X_i,X_j) = K(x)(e_i,e_j) = - (\gamma_{ij}^1(x),\ldots,\gamma_{ij}^n(x))$. Finally, we get
\begin{equation*}
\ddt{t=0} c(t) = X_i(x) - \sum_{jk} u_j \gamma_{ij}^k(x) \partial_k = X_i - V_i \in T_{(x,u)} \Sigma.
\end{equation*}
Thus, for all $i$, $X_i - V_i$ is tangent to $\Sigma$ and we have $\Delta_{(x,u)} \subset T_{(x,u)} \Sigma$.
\end{proof}

From now on, we will consider $\Delta$ as a distribution in $T\Sigma$. 

\begin{lem}
\label{lem:delta_involutive}
When restricted to $\Sigma$, the distribution $\Delta$ is involutive.
\end{lem}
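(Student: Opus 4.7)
The plan is to establish involutivity by verifying the identity
\begin{equation*}
[X_i - V_i,\, X_j - V_j] \;=\; \sum_k \gamma_{ij}^k (X_k - V_k)
\end{equation*}
pointwise on $\Sigma$. Expanding the bracket and splitting into horizontal and vertical components relative to the product $M \times \R^n$, I would observe that the horizontal part reduces immediately to $[X_i, X_j] - \sum_k \gamma_{ij}^k X_k$, which vanishes identically by the definition of the structure functions $\gamma_{ij}^k$. Thus the entire problem concentrates in the vertical part, namely the vertical vector field $-[X_i, V_j] - [V_i, X_j] + [V_i, V_j] + \sum_k \gamma_{ij}^k V_k$.

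Computing this residue is direct bookkeeping: each $V_i$ is vertical with $x$-dependent coefficients, so the brackets $[X_i, V_j]$ produce derivatives of the form $X_i \gamma_{jk}^\ell$, while $[V_i, V_j]$ contributes quadratic products $\gamma\cdot\gamma$. The crucial step is to invoke the Jacobi identity for the frame $(X_1,\ldots,X_n)$, whose $\ell$-th component reads
\begin{equation*}
-X_i\gamma_{jk}^\ell + X_j \gamma_{ik}^\ell - X_k \gamma_{ij}^\ell \;=\; \sum_{k''} \bigl(\gamma_{jk}^{k''}\gamma_{ik''}^\ell - \gamma_{ik}^{k''}\gamma_{jk''}^\ell + \gamma_{ij}^{k''}\gamma_{kk''}^\ell\bigr).
\end{equation*}
Substituting this to rewrite $-X_i\gamma_{jk}^\ell + X_j\gamma_{ik}^\ell$, and using the antisymmetry $\gamma_{kk''}^\ell = -\gamma_{k''k}^\ell$, the quadratic products cancel in pairs and the whole vertical residue collapses to
\begin{equation*}
\sum_{k,\ell} u_k \bigl(X_k \gamma_{ij}^\ell\bigr) \partial_\ell.
\end{equation*}

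The final step is to recognize that this surviving term vanishes on $\Sigma$. By the definition of the universal covariant derivative, the $\ell$-component of $D^1 K(x)(u)(e_i, e_j)$ is precisely $-\sum_k u_k X_k \gamma_{ij}^\ell$, so the coefficient of $\partial_\ell$ vanishes exactly when $D^1 K(x)(u) = 0$, that is, when $u \in \Kill^1(x)$. Since every $(x,u) \in \Sigma$ satisfies $u \in \Kill^{s_0}(x) \subset \Kill^1(x)$, the residue is zero on $\Sigma$ and involutivity follows. I expect the main obstacle to be keeping signs and indices straight during the vertical computation; the conceptual content is just Jacobi on $(X_1,\ldots,X_n)$ combined with the bare definition of $\Kill^1$.
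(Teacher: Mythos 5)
Your proposal is correct and follows essentially the same route as the paper: expand $[X_i-V_i,X_j-V_j]$, use the Jacobi identity for the structure functions $\gamma_{ij}^k$ to reduce the vertical part, and observe that the surviving term $\sum_{k,\ell}u_k(X_k.\gamma_{ij}^{\ell})\partial_{\ell}$ is exactly $-D^1K(x)(u)(e_i,e_j)$, which vanishes on $\Sigma$ because $u\in\Kill^{s_0}(x)\subset\Kill^1(x)$. The only cosmetic difference is that in the paper the quadratic $\gamma\cdot\gamma$ terms are reassembled into $-\sum_k\gamma_{ij}^kV_k$ (cancelling the $+\sum_k\gamma_{ij}^kV_k$ in your residue) rather than ``cancelling in pairs,'' but the computation and conclusion are the same.
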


\begin{proof}
Let us compute the bracket of $X_1 - V_1$ and $X_2-V_2$. We get : 

\begin{align*}
[X_1,V_2] & = \sum_{jk} (X_1.\gamma_{2j}^k)u_j \, \partial_k \\
[V_1,X_2] & = - \sum_{jk} (X_2.\gamma_{1j}^k)u_j \, \partial_k \\
[V_1,V_2] & = \sum_{jklm} \gamma_{1j}^k \gamma_{2m}^l (\delta_{km}u_j \, \partial_l - \delta_{jl}u_m \, \partial_k), \text{ where } \delta_{ij} = 1 \text{ if } i=j \text{, } 0 \text{ else}.
\end{align*}
Note $C_p$ the component of $[X_1-V_1,X_2-V_2]$ on $\partial_p$, for $1\leq p\leq n$. 
\begin{equation*}
C_p = \sum_j\left (\sum_k \gamma_{j1}^k\gamma_{k2}^p + \gamma_{2j}^k\gamma_{k1}^p \right )u_j - \sum_j (X_1.\gamma_{2j}^p) u_j + \sum_j (X_2.\gamma_{1j}^p) u_j.
\end{equation*}
The Jacobi relation involving $X_1$, $X_2$ and $X_j$ gives us :
\begin{equation*}
\forall p,\, \forall j, \: \sum_k \gamma_{j1}^k\gamma_{k2}^p + \gamma_{2j}^k\gamma_{k1}^p + \gamma_{12}^k\gamma_{kj}^p = X_j.\gamma_{12}^p + X_1.\gamma_{2j}^p + X_2.\gamma_{j1}^p.
\end{equation*}
Then we have :
\begin{align*}
C_p & = \sum_j \left [(X_j.\gamma_{12}^p) u_j-\sum_k \gamma_{12}^k \gamma_{kj}^p u_j \right ] \\
    & = \left (\sum_j u_j X_j \right ).\gamma_{12}^p - \sum_k \gamma_{12}^k \sum_j \gamma_{kj}^p u_j
\end{align*}
Therefore,
\begin{align*}
[X_1-V_1,X_2-V_2] & = [X_1,X_2] + \sum_p C_p \partial_p \\
                  & = \sum_k \gamma_{12}^k X_k -\sum_p \left (\sum_k \gamma_{12}^k \sum_j \gamma_{kj}^p u_j\right )\, \partial_p + \sum_p \left (\sum_j u_j X_j \right )\!.\gamma_{12}^p \, \partial_p \\
                  & = \sum_k \gamma_{12}^k \left (X_k - \sum_{jp}\gamma_{kj}^p u_j\, \partial_p \right ) + \sum_p \left (\sum_j u_j X_j \right )\!.\gamma_{12}^p \, \partial_p \\
                  & = \underbrace{\sum_k \gamma_{12}^k (X_k - V_k)}_{\in \Delta} + \sum_p \left (\sum_j u_j X_j \right )\!.\gamma_{12}^p \, \partial_p
\end{align*}
Since $(x,u) \in \Sigma$, $(u_1,\ldots,u_n) \in \Kill^{s_0}(x)$ is a Killing generator of order $s_0 \geq 1$ at $x$, we get $\left [(u_1 X_1 + \cdots + u_nX_n). K\right ] (x) = 0$. Therefore, the second term of the sum is $0$. Hence $[X_1-V_1, X_2-V_2] \in \Delta$. Of course, this calculation is still valid for an arbitrary bracket $[X_i-V_i,X_j-V_j]$ and we have proved that $\Delta$ is involutive.
\end{proof}

\paragraph*{Conclusion}
The distribution $\Delta$ gives rise to a foliation $\mathcal{F}$ in $\Sigma$. Consider the leaf $\mathcal{F}_0$ containing $(x_0,A)$ and note $p : \Sigma \rightarrow M$ the projection on the first factor. Since $X_1,\ldots,X_n$ are everywhere in the image of the tangent map of $p|_{\mathcal{F}_0}$, this map is a local diffeomorphism from $\mathcal{F}_0$ to $M$ by the inverse mapping theorem. In a small enough neighbourhood of $x_0$, set $\hat{A}(x)=p|_{\mathcal{F}_0}^{-1}(x)$. Then $\hat{A}$ is a field near $x_0$ whose graph integrates the distribution $\Delta$. Therefore $\hat{A}$ is a Killing field and $\hat{A}(x_0) = A$.

\subsubsection{Orbits of local automorphisms of a parallelism}
\label{sss:parallelism_islocorbit}

We still consider an $n$-dimensional manifold $M$ endowed with an absolute parallelism $\mathcal{P}$, and note $\omega$ the Cartan connection of its associated Cartan geometry. We shall now prove Theorem \ref{thm:isloc_orbit} in this situation. Set $\Omega=\Int(M,\mathcal{P})$.

We first study the $\Kill^{loc}$-orbits in $\Omega$. Fix $x_0 \in \Omega$ and note $\mathcal{O}$ its $\Kill^{loc}$-orbit. The orbit $\mathcal{O}$ must be included in the connected component $U$ of $\Omega$ containing $x_0$. The map $k_{n+1}(x)$ is constant equal to $k_0$ over $U$. This means that $\Kill^{n+1}(x)$ has constant dimension $k_{n+1}^0$ on $U$. Thus, the map $\mathcal{D}^n K : U \rightarrow \mathcal{W}^n$ has constant rank $n-k_0$ since $\Kill^{n+1}(x) = \omega_x(\Ker T_x \mathcal{D}^nK)$ for all $x \in U$. Therefore, its level sets are properly embeded submanifolds of $U$. Let $\mathcal{F} \subset U$ be the connected component of the level set $\mathcal{D}^n K^{-1}(\mathcal{D}^n K(x_0))$ in $U$ containing $x_0$. Note first that $\mathcal{F}$ is preserved by the flows of local Killing fields of $\mathcal{P}$. By construction, Theorem \ref{thm:frobenius} ensures that for all $x \in \mathcal{F}$, $T_x \mathcal{F} = \Kill^{loc}(x)|_x = \{X(x), \ X \in \Kill^{loc}(x)\}$ since we have $\omega_x(T_x \mathcal{F}) = \omega_x(\Ker T_x \mathcal{D}^n K)=\Kill^{n+1}(x)$ and $x \in \Int(M,\mathcal{P})$. Therefore, for all $x \in \mathcal{F}$, the $\Kill^{loc}$-orbit of $x$ is contained in $\mathcal{F}$ and has dimension $k_0 = \dim \mathcal{F}$. By connectedness, $\mathcal{O} = \mathcal{F}$.

We conclude that for all $x \in U$, the $\Kill^{loc}$-orbit of $x$ is exactly the connected component containing $x$ of the level set of $\mathcal{D}^n K$ passing through $x$.

Now, consider $\mathcal{O}'$ an $\Aut^{loc}$ orbit in $\Omega$. It is enough to prove that the intersection between $\mathcal{O}'$ and any connected component of $\Omega$ is a closed submanifold of this component. Let $U$ be a connected component and  $x \in U\cap \mathcal{O}'$. Then $\mathcal{O}' \cap U$ is included in $\mathcal{D}^n K^{-1}(\mathcal{D}^n K(x)) \cap U$. Since $\mathcal{O}'$ is stable under the action of flows of local Killing fields, what has been proved above implies that $\mathcal{O}' \cap U$ is a union of connected component of $\mathcal{D}^n K^{-1}(\mathcal{D}^n K(x)) \cap U$, and therefore it is a closed submanifold of $U$.

\subsection{From parallelisms to general Cartan geometries}
\label{ss:generalcase}
Let $(M,\mathcal{C})$ be a manifold endowed with a Cartan geometry modeled on $\X = G/P$. We still note $\pi : \hat{M} \rightarrow M$ the associated $P$-principal bundle and $\omega$ the Cartan connection. The Cartan form defines an absolute parallelism on $\hat{M}$, denoted $\mathcal{P}_{\omega}$ : identify once for all in an arbitrary way the vector space $\g$ with $\R^m$ and note $(e_1,\ldots,e_m)$ its standard basis. Let $X_i = \omega^{-1}(e_i)$, for $1 \leq i \leq m$ and set $\mathcal{P}_{\omega} = (X_1, \ldots , X_m)$. 

Thus, we have two different geometric structures on two different manifolds : a Cartan geometry on $M$, namely $(M,\mathcal{C})$, and a parallelism on $\hat{M}$, namely $(\hat{M},\mathcal{P}_{\omega})$. We then have two curvature maps :
\begin{enumerate}
\item The curvature map $K : \hat{M} \rightarrow \Hom(\Lambda^2 \g,\g) \simeq \Hom(\Lambda^2 \R^m,\R^m)$ of the Cartan geometry $(M,\mathcal{C})$ ;
\item The curvature map $K^{\mathcal{P}_{\omega}} : \hat{M} \rightarrow \Hom(\Lambda^2 \R^m,\R^m)$ of $(\hat{M},\mathcal{P}_{\omega})$, as defined in section \ref{ss:parallelism}.
\end{enumerate}
Those maps satisfy the relation $K(\hx) = [\:,]_{\g} + K^{\mathcal{P}_{\omega}}(\hx), \text{ in } \Hom(\Lambda^2 \R^m,\R^m)$, for all $\hx \in \hat{M}$, since we have for all $X,Y \in \g$, $K(\hx)(X,Y) = [X,Y]_{\g} - \omega_{\hx}([\tilde{X},\tilde{Y}])$. Therefore, for all $r \geq 1$, $D^r K = D^r K^{\mathcal{P_{\omega}}}$ as maps on $\hat{M}$. 

We deduce that for all $\hat{x} \in \hat{M}$ and $A \in \g$, $A$ is a Killing generator of order $r \geq 1$ at $\hat{x}$ of $(M,\mathcal{C})$ if and only if it is a Killing generator of order $r$ at $\hx$ of $(\hat{M},\mathcal{P_{\omega}})$. Consequently, the integrability domain of $(\hat{M},\mathcal{P}_{\omega})$ coincides with $\pi^{-1}(\Int(M,\mathcal{C}))$.

\subsubsection{Integrability theorem}
\label{sss:generalcase_frobenius}

We first explain how to deduce from the previous study the proof of Theorem \ref{thm:frobenius} on the integrability of Killing generators in a general Cartan geometry. It is the easiest part.

Take $\Omega=\Int(M,\mathcal{C})$. Fix $x \in \Omega$, $\hx \in \hat{M}$ lying over $x$ and $A \in \Kill^{m+1}(\hx)$ a Killing generator of order $m+1$ of $(M,\mathcal{C})$ at $\hx$. According to what has been said above, $\hx \in \Int(\hat{M},\mathcal{P}_{\omega})$ and $A$ is a Killing generator of order $m+1$ of $(\hat{M},\mathcal{P}_{\omega})$ at $\hx$. Then there exists $\hat{A}$ a local Killing field at $\hx$ of $(\hat{M},\mathcal{P}_{\omega})$, defined on an open set $\mathcal{U} \subset \hat{M}$ and such that $\omega_{\hx}(\hat{A})=A$. Shrinking $\mathcal{U}$ if necessary, we can apply Proposition \ref{prop:extension} to conclude that $\hat{A}$ extend to the saturated open set $\mathcal{U}.P$ and that this new vector field is the lift of a local Killing field of $(M,\mathcal{C})$ defined near $x$.

\subsubsection{Structure of orbits} 
\label{sss:generalcase_gromov}

The Theorem \ref{thm:isloc_orbit} on $\Aut^{loc}$-orbits will be more difficult to prove. The study of the orbits of a parallelism ensures that in $\pi^{-1}(M^{int})$, the orbits of the parallelism $\mathcal{P}_{\omega}$ are closed submanifolds. Moreover, Proposition \ref{prop:extension} implies that orbits of $(M,\mathcal{C})$ are the projection on $M$ of orbits of $(\hat{M},\mathcal{P}_{\omega})$. We are then left to prove that these projections are closed submanifolds. This will not be true if we consider orbits in the whole domain $\Int(M,\mathcal{C})$ and we will fix this problem by shrinking this open dense set.

Recall that we need to assume that the model space $\X = G/P$ is of algebraic type. In particular, the group $\Ad_{\g}(P)<\GL(\g)$ acts algebraically on $\g$ and the $P$-orbits in every space $\mathcal{W}^{r}$ are properly embeded submanifolds. We will make use of the following property of algebraic actions.

\begin{prop}
\label{prop:algebraic_orbit}
Let $G$ be a real algebraic group and $G\times V \rightarrow V$ an algebraic action of $G$ on a real variety $V$. Let $v \in V$ and $v'$ in the boundary of the orbit $G.v$. Then, $\dim G.v' < \dim G.v$.
\end{prop}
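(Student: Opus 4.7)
The plan is to combine two classical results on algebraic actions: orbits are locally closed, and proper closed subvarieties of an irreducible (or equidimensional) variety have strictly smaller dimension. The whole argument runs through the Zariski closure of the orbit.

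First, I would invoke the standard theorem for algebraic actions (a real-algebraic analogue of Chevalley's result): the orbit $G.v$ is Zariski-locally closed in $V$, \emph{i.e.} Zariski-open in its Zariski closure $W = \overline{G.v}^{\,Z}$. The set $W$ is $G$-invariant, and since $G.v$ is Zariski-dense in it and acted on transitively by $G$, the irreducible components of $W$ are permuted by $G$ and are therefore equidimensional, of dimension equal to the smooth-manifold dimension $\dim G.v$.

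Next, I observe that the Hausdorff closure $\overline{G.v}$ is contained in $W$, because $W$ is closed in the Hausdorff topology as a real algebraic set. Consequently the Hausdorff boundary $\partial(G.v)=\overline{G.v}\setminus G.v$ is contained in $W\setminus G.v$, which is a \emph{proper} Zariski-closed subset of $W$ (it is proper because $G.v\subset W$ is non-empty and Zariski-open). Since $W$ is equidimensional of dimension $\dim G.v$, any proper Zariski-closed subset has strictly smaller dimension, so $\dim(W\setminus G.v)<\dim G.v$.

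Finally, the boundary $\partial(G.v)$ is $G$-invariant: it is the complement, inside the $G$-invariant set $\overline{G.v}$, of the $G$-invariant set $G.v$. Hence for $v'\in\partial(G.v)$ the entire orbit $G.v'$ lies in $\partial(G.v)$, and
\[
\dim G.v' \leq \dim \partial(G.v) \leq \dim(W\setminus G.v) < \dim G.v,
\]
which is the strict inequality sought. The only non-trivial step is the local closedness of algebraic orbits used at the start; this is precisely what the algebraicity hypothesis is introduced to guarantee, and is the main point where the argument would fail for a merely smooth action (orbits of smooth actions can accumulate on themselves with arbitrary dimension behaviour in the boundary).
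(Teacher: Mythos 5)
The paper states this proposition without proof, as a classical fact about algebraic actions, so there is no internal argument to compare yours with; I therefore assess your proof on its own terms, and it has a genuine gap at the very first step. For a \emph{real} algebraic group acting on a real variety, the orbit $G.v$ is in general \emph{not} Zariski-open in its Zariski closure: the correct real analogue of Chevalley's theorem is that orbits are locally closed for the \emph{Hausdorff} topology (Borel--Harish-Chandra; see e.g.\ Zimmer, Theorem 3.1.3). A concrete counterexample: let $G=\R^{*}$ act on $V=\R$ by $t\cdot x=t^{2}x$. The orbit of $1$ is $\R_{>0}$, whose Zariski closure is $W=\R$, and $\R_{>0}$ is not Zariski-open in $\R$ (nonempty Zariski-open subsets of the line are cofinite). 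In this example your key intermediate claim also fails: $W\setminus G.v=\R_{\leq 0}$ is neither Zariski-closed nor of dimension $<\dim W$. Since the strict inequality at the end of your argument is derived entirely from ``$W\setminus G.v$ is a proper Zariski-closed subset of the equidimensional variety $W$'', the chain $\dim G.v'\leq\dim(W\setminus G.v)<\dim G.v$ is not justified. (The conclusion is of course still true here: $\partial(\R_{>0})=\{0\}$ has dimension $0$.)

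The parts of your argument that do not rest on this step are fine: the $G$-invariance of the Hausdorff boundary, hence $G.v'\subset\partial(G.v)$, and the equality $\dim W=\dim G.v$. What is missing is a correct reason why the \emph{Hausdorff} boundary drops dimension. The cleanest repair stays entirely in the semialgebraic category: $G.v$ is a semialgebraic subset of $V$ (it is the image of the regular, hence semialgebraic, map $g\mapsto g.v$, and Tarski--Seidenberg applies), and for any nonempty semialgebraic set $S$ one has $\dim(\overline{S}\setminus S)<\dim S$, the closure being Euclidean (Bochnak--Coste--Roy, Prop.\ 2.8.13); combined with the $G$-invariance of $\partial(G.v)$ and the inclusion $G.v'\subset\partial(G.v)$ this gives the proposition at once. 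Alternatively one can complexify, apply Chevalley's theorem to the Zariski-locally closed orbit $G_{\C}.v$, and descend to real points, but that route requires extra care with the finitely many real orbits sitting inside a single complex orbit; the semialgebraic argument avoids this entirely.
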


Let $\Phi : \hx \in \pi^{-1}(M^{int}) \mapsto \mathcal{D}^m K(\hx) \in \mathcal{W}^m$. Define $d : x\in M^{int} \mapsto \dim(P.\Phi(\hx))$, for any $\hx$ lying over $x$ (recall $\Phi$ is $P$-equivariant). The map $d$ is lower semi-continuous and $\Aut^{loc}$-invariant since $w \in \mathcal{W}^m \mapsto \dim P.w$ is itself lower semi-continuous and $\Phi \circ \hat{f} = \Phi$ for all $f \in \Aut^{loc}$. Therefore, there exists $\Omega \subset M^{int}$ an $\Aut^{loc}$-invariant open dense subset on which $d$ is locally constant. Since $k_{m+1}$ and $d$ are locally constant on this open set, $\Omega$ decomposes into a finite union of open sets $\Omega = \Omega_1 \cup \cdots \cup \Omega_l$ such that for all $1 \leq i \leq l$, the maps $k_{m+1}$ and $d$ are constant equal to $k_{m+1}^i$ and $d_i$ on $\Omega_i$. If we assume this decomposition to be minimal, each $\Omega_i$ will be $\Aut^{loc}$-invariant. We now focus on the behaviour of local orbits in each $\Omega_i$ and prove that for all $i$, the $\Aut^{loc}$-orbits in $\Omega_i$ are closed submanifold with the same dimension.

Let $\hat{\Omega}_i = \pi^{-1}(\Omega_i)$. We note $\Phi_i = \Phi|_{\hat{\Omega}_i}$. Then $\Phi_i$ has constant rank $m-k_{m+1}^i$ on $\hat{\Omega}_i$. Let $\mathcal{O}$ be the $\Aut^{loc}$-orbit of a point $x \in \Omega_i$. Let $\mathcal{P}_i$ be the restriction to $\hat{\Omega}_i$ of the parallelism $\mathcal{P}_{\omega}$ defined by the Cartan connection. Take $\hx$ lying over $x$ and note $\mathcal{O}'$ the $\Aut^{loc}$-orbit of $\hx$ for the structure $(\hat{\Omega}_i,\mathcal{P}_i)$. As we saw in section \ref{sss:parallelism_islocorbit}, $\mathcal{O}'$ is in fact a union of connected components of the level set $\Phi_i^{-1}(\Phi_i(\hx))$. According to Proposition \ref{prop:extension}, $\mathcal{O} = \pi(\mathcal{O}')$. This observation invites us to study the union of the $\Aut^{loc}$-orbits of every point $\hx$ in the fiber $\pi^{-1}(x)$, that is $\pi^{-1}(\mathcal{O})$.

Fix $\hx$ in $\pi^{-1}(x)$, note $w = \Phi_i(\hx)$. We claim that $\Phi_i^{-1}(P.w)=\pi^{-1}(\mathcal{O})$ is a closed submanifold of $\hat{\Omega}_i$, saturated by the $P$-action.

First, $\Phi_i^{-1}(P.w)$ has the structure of a submanifold since $\Phi_i$ has constant rank and $P.w$ is a submanifold of $\mathcal{W}^m$ included in the image of $\Phi_i$. Note that $\dim \Phi_i^{-1}(P.w) = \dim P.w + (\dim \hat{\Omega}_i - \text{Rk} \, \Phi_i ) = d_i + k_{m+1}^i$.

Let $\hx_n$ be a sequence of $\Phi_i^{-1}(P.w)$ that converges to a point $\hx_{\infty}$ in $\hat{\Omega}_i$. The limit $\Phi_i(\hx_{\infty})$ belongs to the closure of $P.w$ for the Hausdorff topology in $\mathcal{W}^m$, and \textit{a fortiori} to its Zariski closure. Since the $P$-action on $\mathcal{W}^m$ is algebraic, if $\Phi_i(\hx_{\infty})$ were not in $P.w$, Proposition \ref{prop:algebraic_orbit} would imply $\dim(P.\Phi_i(\hx_{\infty})) < d_i$, contradicting $\hx_{\infty} \in \hat{\Omega}_i$. Then, $\hx_{\infty} \in \Phi_i^{-1}(P.w)$.

Finally, since the projection $\pi$ is open, $\pi(\Phi_i^{-1}(P.w))$ is a closed submanifold of $\Omega_i$. Moreover, $\pi(\Phi_i^{-1}(w)) = \pi(\Phi_i^{-1}(P.w))$. Since the orbit $\mathcal{O}'$ is a union of connected components of $\Phi_i^{-1}(w)$ and since $\mathcal{O} = \pi(\mathcal{O}')$, $\mathcal{O}$ is a union of connected components of the manifold $\pi(\Phi_i^{-1}(P.w))$. Thus $\mathcal{O}$ is itself a closed submanifold of $\Omega_i$. Moreover, $\dim \mathcal{O} = \dim \Phi_i^{-1}(P.w) - \dim P = d_i + k_{m+1}^i - \dim \p$. This dimension does not depend on the orbit $\mathcal{O} \subset \Omega_i$.

Remark that $\dim P.w + \dim \Kill^{m+1}(\hx) \geq \dim P$, since $\Lie(\Stab_P(w)) = \Kill^{m+1}(\hx) \cap \, \p$.

\subsection{Extension to more general structures}
\label{ss:extension}
If we are interested in the orbits of automorphisms of a given Cartan geometry that also preserve some tensor, we cannot \textit{a priori} find a Cartan geometry that models the structure resulting from the union of the initial Cartan geometry and the tensor. We have in mind the proof of a theorem of D'Ambra whose statement is :

\begin{thm*}[\cite{dambra_gromov}, 0.12.A]
Let $(M,g)$ be a simply connected, real-analytic, compact, Lorentz maninold. Then the isometry group $\Isom(M,g)$ is compact.
\end{thm*}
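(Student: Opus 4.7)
The plan is to use Theorem~\ref{thm:frobenius} and the analyticity plus simple-connectedness hypotheses to turn Killing generators into \emph{global} Killing fields, and then to rule out non-compactness of $\Isom(M,g)$ via a blow-up argument that exploits the algebraicity of the Lorentz isotropy $P = O(1,n-1)$.

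First, the metric $g$ defines a torsion-free analytic Cartan geometry $(M,\mathcal{C})$ modeled on $\X = (O(1,n-1)\ltimes\R^n)/O(1,n-1)$, whose local automorphisms are the local isometries of $g$. Analyticity makes $\omega$, $K$, and every $\mathcal{D}^r K$ analytic on $\hat{M}$. Theorem~\ref{thm:frobenius} furnishes an open dense $\Omega \subset M$ on which Killing generators of order $m+1$ integrate to local Killing fields; in the analytic setting each such local field extends by unique analytic continuation along paths, and since $M \setminus \Omega$ is a nowhere-dense analytic subset, the continuation crosses it. Because $\pi_1(M)=0$, the monodromy is trivial, so every local Killing field extends to a global one. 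This yields a finite-dimensional Lie algebra $\mathfrak{k}$ of global Killing fields whose evaluation at any $\hx$ is $\Kill^{\infty}(\hx)$; in particular $\mathfrak{k} = \Lie(\Isom(M,g))$.

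Suppose for contradiction that $\Isom(M,g)$ is non-compact, and pick a divergent sequence $f_n$ with lifts $\hat{f}_n \in \Aut(M,\mathcal{C})$. Compactness of $M$, together with suitable right-translations by $p_n \in P$, produces $\hx_n \to \hx$ and $\hat{f}_n(\hx_n).p_n \to \hat{y}$ in $\hat{M}$; divergence of $(f_n)$ forces $p_n \to \infty$ in $P$. Each $\hat{f}_n$ preserves every $\mathcal{D}^r K$ and the $P$-action on $\mathcal{W}^r$ is algebraic, so Proposition~\ref{prop:algebraic_orbit} forces every limit direction of $(p_n)$ in a suitable compactification of $P$ to stabilize $\mathcal{D}^r K(\hat{y})$ for all $r \leq m+1$. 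After a dynamical extraction, this produces a non-zero vector $A \in \Kill^{m+1}(\hat{y}) \cap \p$ along an unbounded direction of $O(1,n-1)$. Theorem~\ref{thm:frobenius} and the first step then integrate $A$ into a global Killing field $X \in \mathfrak{k}$ vanishing at $y$, whose linear isotropy at $y$ is a non-relatively-compact one-parameter subgroup of $O(1,n-1)$.

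The contradiction comes from Lorentzian rigidity. Via the exponential chart on $\hat{M}$ and the equivariance $\exp(\hx.p,A) = \exp(\hx, \Ad(p)A).p$, the dynamics of the isotropy of $X$ at $y$ governs the orbital dynamics of the flow of $X$ on a neighbourhood of $y$ in $M$; an unbounded linear isotropy forces orbits escaping to infinity under this flow, contradicting compactness of $M$. Alternatively one may invoke the Adams--Stuck--Zeghib classification of Killing fields on compact Lorentz manifolds to the same effect. The \textbf{main obstacle} is the blow-up step of the previous paragraph: extracting from the divergent sequence $(p_n) \subset O(1,n-1)$ a \emph{bona fide} Killing generator which survives integration and produces a non-trivial global Killing field. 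This is precisely where the Lorentzian signature matters --- in the Riemannian case the isotropy $O(n)$ is already compact, no sequence $p_n \to \infty$ can arise, and the whole discompactness analysis is vacuous.
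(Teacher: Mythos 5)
This statement is D'Ambra's theorem, which the paper does not prove: it is quoted from \cite{dambra_gromov} purely as motivation for Section \ref{ss:extension} (the point being that D'Ambra applies Gromov's orbit theorem to the \emph{union} of $g$ with a family of vector fields, which is why the paper introduces generalized Cartan geometries $(M,\mathcal{C},\varphi)$). So there is no in-paper proof to compare against, and your proposal must stand on its own as a proof of D'Ambra's theorem. It does not.

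There are two genuine gaps. First, the step you yourself flag as the ``main obstacle'' --- extracting from a divergent sequence $(p_n)\subset O(1,n-1)$ a non-zero Killing generator $A\in\Kill^{m+1}(\hat y)\cap\p$ in an ``unbounded direction'' --- is not a technical detail but the entire content of D'Ambra's argument; nothing in Proposition \ref{prop:algebraic_orbit} or Theorem \ref{thm:frobenius} produces such a vector, and ``after a dynamical extraction'' is not a proof. Second, and more seriously, the concluding contradiction is false: a global Killing field on a compact manifold is automatically complete, its orbits cannot ``escape to infinity,'' and a non-precompact linear isotropy at a fixed point is not in itself incompatible with compactness of $M$. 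Compact Lorentz manifolds with non-compact isometry groups exist (flat tori with hyperbolic linear automorphisms, quotients of $\widetilde{\mathrm{SL}}(2,\R)$, oscillator-group quotients); this is exactly why the Adams--Stuck--Zeghib classification \emph{includes} non-compact groups and cannot be invoked ``to the same effect.'' The hypotheses of simple connectedness and analyticity must enter the dynamical part of the argument --- in D'Ambra's proof they do, via a long study of the isotropy representation and of degenerate totally geodesic foliations --- whereas in your sketch they are only used for the (essentially classical, Amores--Nomizu) extension of local Killing fields to global ones. That first paragraph is fine in spirit, but it is the easy part.
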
 

D'Ambra used Gromov's result on $(M,\varphi)$, where $\varphi$ is the structure resulting from the \textit{union} of $g$ and a well chosen set of vector fields $(X_1,\ldots,X_s)$. That is, in an open dense subset of $M$, the orbits of local automorphisms that preserve the metric $g$ \textit{and} each field $X_i$ are closed submanifolds.

Also, one can remark that Theorem \ref{thm:isloc_orbit} has an awkward weakness with regard to Gromov's statement, since $A$-rigid structures include structures similar to $\varphi$, whereas Cartan geometries do not. Thus, the motivation of this section is to fill this gap and to obtain a formulation of Theorem \ref{thm:isloc_orbit} for geometric structures more general than Cartan geometries. 

\subsubsection{Cartan geometries with additional geometric structure}
\label{sss:generalized_cartan}

In any Cartan geometry, the tangent bundle $TM$ is naturally isomorphic to $\hat{M}\times_{P} \g / \p$, where $P$ acts on $\g / \p$ via the representation $\bar{\Ad}$ (see \cite{sharpe}, p.188). Therefore, many geometrical objects on $M$ can be interpreted as $P$-equivariant maps from $\hat{M}$ to some manifold endowed with a $P$-action. For instance, one can see tensors of type $(r,s)$ on $M$ as $P$-equivariant maps $\hat{M} \rightarrow \mathcal{T}_r^s(\g / \p)$, where $\mathcal{T}_r^s(\g / \p)$ denotes the space of tensors of type $(r,s)$ on $\g / \p$, endowed with the action induced by $\bar{\Ad}_{\g / \p}(P)$. Moreover, any $k$-dimensional distribution on $M$ can be seen has a $P$-equivariant map $\hat{M} \rightarrow Gr_k(\g / \p)$, where $Gr_k(\g / \p)$ denotes the $k$-dimensional Grassmanian of $\g / \p$, $P$ acting on it via the adjoint representation.

\begin{dfn}
Let $\X = G/P$ be a model space and $W$ be a manifold endowed with a right action of the group $P$. A generalized Cartan geometry $(M,\mathcal{C},\varphi)$ of type $(\X , W)$ on a manifold $M$ is the data of a Cartan geometry $(M,\mathcal{C})$ modeled on $\X$ together with a $P$-equivariant map $\varphi : \hat{M} \rightarrow W$.

Local automorphisms of such a geometric structure are local automorphisms $f : U \rightarrow V$ of $(M,\mathcal{C})$ whose lift $\hat{f}$ satisfies $\varphi \circ \hat{f} = \varphi$ on $\pi^{-1}(U)$. A local Killing field of $(M,\mathcal{C},\varphi)$ at a point $x \in M$ is a vector field $X$ defined on a neighbourhood of $x$ whose flow is composed with local automorphisms of $(M,\mathcal{C},\varphi)$.
\end{dfn}

\begin{rem}
In the examples given above, a $\varphi$-preserving automorphism is nothing more than an automorphism of $(M,\mathcal{C})$ preserving the corresponding tensor or $k$-dimensional distribution on $M$.
\end{rem}

In what follows, $\Aut^{loc}_{\varphi}$ will denote the local automorphisms of a generalized Cartan geometry $(M,\mathcal{C},\varphi)$, and $\Kill^{loc}_{\varphi}(x)$ its local Killing fields defined near $x \in M$. If $X \in \Kill^{loc}_{\varphi}(x)$ is defined on $U \subset M$, $X$ admits a lift $\hat{X}$ to $\pi^{-1}(U)$ such that 
\begin{enumerate}
\item $\hat{X}$ commutes with the right $P$-action ;
\item $\L_{\hat{X}}\omega = 0$ ;
\item and, if $\phi^t_{\hat{X}}$ denotes its local flow, $\varphi \circ \phi_{\hat{X}}^t = \varphi$ on $\pi^{-1}(U)$.
\end{enumerate}
Conversely, a vector field on $\pi^{-1}(U)$ satisfying these three conditions is the lift of a local Killing field of $(M,\mathcal{C},\varphi)$.

\subsubsection{Generalization of the results}

Theorem \ref{thm:isloc_orbit} naturally extends to this kind of geometric structures, when we assume $P$ and $W$ algebraic and the $P$-action on $W$ algebraic.

\begin{thm}
\label{thm:gromov_generalized}
Let $(M,\mathcal{C},\varphi)$ be a generalized Cartan geometry of type $(\X,W)$. Assume that $\X$ is of algebraic type, the group $P$ is algebraic, $W$ is algebraic and the $P$-action on $W$ is algebraic. Then, there exists an open dense subset $\Omega$ of $M$ on which the $\Aut^{loc}_{\varphi}$-orbits are closed submanifolds. 

More precisely, $\Omega$ admits a decomposition $\Omega = \Omega_1 \cup \cdots \cup \Omega_k$ where each $\Omega_i$ is an open subset, preserved by local automorphisms, and in which the $\Aut^{loc}_{\varphi}$-orbits are closed submanifolds which all have the same dimension.
\end{thm}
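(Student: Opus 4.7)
The plan is to treat $\varphi$ on the same footing as the curvature $K$ and rerun the proof of Theorem \ref{thm:isloc_orbit} with the pair $(K,\varphi)$ in place of $K$ alone. First, I would introduce the enriched $P$-equivariant invariant
\[
\Psi^r := (\mathcal{D}^r K,\ \mathcal{D}^r \varphi)\ :\ \hat{M}\ \longrightarrow\ \mathcal{W}^r\times\mathcal{V}^r,\qquad \mathcal{V}^r := W\oplus\Hom(\g,W)\oplus\cdots\oplus\Hom(\otimes^r\g,W),
\]
and, extending $\lefthalfcup A$ to $\mathcal{V}^r$ in the obvious way, introduce the $\varphi$-Killing generators $\Kill^r_\varphi(\hat x):=\{A\in\g:\Psi^r(\hat x)\lefthalfcup A=0\}$. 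The lift of any local Killing field of $(M,\mathcal{C},\varphi)$ takes values in $\Kill^\infty_\varphi$, and the dimensions $k^\varphi_r:=\dim\Kill^r_\varphi$ are upper semicontinuous and $\Aut^{loc}_\varphi$-invariant. This yields an open dense $\Aut^{loc}_\varphi$-invariant integrability domain $\Int(M,\mathcal{C},\varphi)$ on which $k^\varphi_1,\ldots,k^\varphi_{m+2}$ are locally constant, exactly as in Section \ref{ss:gromov_definition}.

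Next I would establish the analog of Theorem \ref{thm:frobenius} for $(M,\mathcal{C},\varphi)$: any $A\in\Kill^{m+1}_\varphi(\hat x)$ over $\Int(M,\mathcal{C},\varphi)$ integrates to a local Killing field of $(M,\mathcal{C},\varphi)$. The strategy is that of Section \ref{sss:frobenius_parallelism}, applied to the parallelism $\mathcal{P}_\omega$ of $\hat M$, but with $\Sigma$ replaced by
\[
\Sigma_\varphi := \{(\hat x,v)\in\hat M\times\g\ :\ v\in\Kill^{s_0}_\varphi(\hat x)\},
\]
where $s_0$ is the stabilization index of the sequence $k^\varphi_\bullet$ at the base point (necessarily $s_0\leq m+1$). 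Smoothness of $\Sigma_\varphi$ and tangency to it of the distribution $\Delta$ built in Section \ref{sss:frobenius_parallelism} both follow from the Melnick-type propagation Lemma \ref{lem:melnick} applied to $\Psi^{s_0}$ in place of $K$: the proof transcribes verbatim, since the argument in Appendix \ref{appendix} uses only the general machinery of covariant derivatives of an equivariant map. Involutivity of $\Delta$ on $\Sigma_\varphi\subset\Sigma$ is inherited from its involutivity on $\Sigma$ (Lemma \ref{lem:delta_involutive}) once tangency is known. Integrating the leaf through $(\hat x,A)$ and saturating by the $P$-action via Proposition \ref{prop:extension} then produces the desired local Killing field of $(M,\mathcal{C},\varphi)$ near $x$.

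The structure of $\Aut^{loc}_\varphi$-orbits is then obtained by a direct transcription of Section \ref{sss:generalcase_gromov} with $\Phi=\mathcal{D}^m K$ replaced by $\Psi^m$. The function $d(x):=\dim P.\Psi^m(\hat x)$ is lower semicontinuous and $\Aut^{loc}_\varphi$-invariant; combining it with the $k^\varphi_r$ yields a finite decomposition $\Omega=\Omega_1\cup\cdots\cup\Omega_k$ of an open dense $\Aut^{loc}_\varphi$-invariant set in which $\Psi^m$ has constant rank and all $P$-orbits in its image have the same dimension. The hypotheses that $P$, $W$, and the $P$-action on $W$ are algebraic (together with $\X$ of algebraic type) make the diagonal $P$-action on $\mathcal{W}^m\times\mathcal{V}^m$ algebraic, so Proposition \ref{prop:algebraic_orbit} applies verbatim. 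The converging-sequences argument of Section \ref{sss:generalcase_gromov} then shows that each $\pi^{-1}(\mathcal{O})=(\Psi^m)^{-1}(P.\Psi^m(\hat x))$ is a closed submanifold of $\pi^{-1}(\Omega_i)$, hence so is $\mathcal{O}\subset\Omega_i$, of dimension $d_i+k^\varphi_{m+1,i}-\dim\p$ independent of the orbit.

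The main obstacle I anticipate is the Frobenius step, specifically ensuring that the foliation one integrates really tracks $\varphi$-preserving Killing fields and not just $\omega$-preserving ones. What makes this essentially free is structural: the Melnick propagation argument shows that the $\varphi$-Killing-generator condition is preserved along the very flows that generate $\Delta$, so $\Sigma_\varphi$ is $\Delta$-invariant, and the vector field coming out of the leaf through $(\hat x,A)$ automatically satisfies $\hat A\cdot\varphi=0$ in addition to $\L_{\hat A}\omega=0$. Once this is in place, the algebraic orbit-structure argument is entirely formal, and no new idea beyond those already present in Section \ref{ss:generalcase} is needed.
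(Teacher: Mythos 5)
Your overall strategy --- enrich the curvature invariants by $\varphi$, define $\varphi$-Killing generators, prove a Frobenius-type integrability theorem on the submanifold $\Sigma_\varphi$, and then rerun the algebraic orbit argument --- is exactly the strategy of the paper. The involutivity and leaf-integration steps, and the final orbit-closure argument via Proposition \ref{prop:algebraic_orbit}, are as you describe them.

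However, there is a genuine gap at the very first step: you set $\mathcal{V}^r = W\oplus\Hom(\g,W)\oplus\cdots\oplus\Hom(\otimes^r\g,W)$, which only makes sense when $W$ is a vector space. The theorem is stated for an arbitrary algebraic manifold $W$ with algebraic $P$-action, and the motivating examples (e.g.\ a $k$-dimensional distribution, encoded as an equivariant map to the Grassmannian $Gr_k(\g/\p)$) force $W$ to be nonlinear. The paper resolves this by replacing $\mathcal{D}^r\varphi$ with the tower $\mathcal{T}^{\,r}\varphi:\hat M\to B^r(W)$, where $B(W)=\{(w,\alpha):\ \alpha\in\Hom(\g,T_wW)\}$ carries the induced $P$-action, and by observing that $B(W)=(W\times\g)^*\otimes TW$ is again smooth quasi-projective with algebraic $P$-action, so that $B^m(V\times W)$ can play the role of your $\mathcal{W}^m\times\mathcal{V}^m$. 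This is not mere bookkeeping: for the integrability half one can localize in a chart of $W$ near $\varphi(x_0)$ and reduce to the vector-space case (where indeed $\mathcal{T}^{\,r}=\mathcal{D}^r$ and Lemma \ref{lem:melnick} transcribes as you claim), but for the orbit-structure half a chart is not $P$-equivariant, so the converging-sequence/orbit-dimension argument needs the globally defined $P$-equivariant map $\Psi=\mathcal{T}^{\,m}(K,\varphi)$ into an honest algebraic $P$-variety. As written, your proof establishes the theorem only when $W$ is a linear $P$-representation; to cover the general statement you must either build the bundles $B^r(W)$ and check their algebraicity, or find some other $P$-equivariant linearization of $W$, which is not available for free.
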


The proof of this result will also be based on an integrability result. Before starting it, we need some additional definitions.

\begin{dfn}
Let $W$ be a manifold endowed with a right $P$-action. We associate to it a vector bundle $B(W)$ over $W$ defined by
\begin{equation*}
B(W) = \{ (w,\alpha) , \ w \in W, \ \alpha \in \Hom(\g,T_w W)\}.
\end{equation*}
If $\varphi : \hat{M} \rightarrow W$ is a $P$-equivariant map, we define a map $\mathcal{T}\varphi : \hat{M} \rightarrow B(W)$ setting for all $\hx \in \hat{M}$
\begin{equation*}
\mathcal{T}\varphi(\hx) = (\varphi(\hx) , T_{\hx} \varphi \circ \omega_{\hx}^{-1}) \in B(W).
\end{equation*}
The map $\mathcal{T}\varphi$ is $P$-equivariant if we endowed $B(W)$ with the $P$-action given by
\begin{equation*}
(w,\alpha).p = (w.p, T_w R_p \circ \alpha \circ \Ad(p^{-1})),
\end{equation*}
where $R_p$ denotes the right action of $P$ on $W$. We then define recursively for $r \geq 0$
\begin{align*}
B^0(W) & = W \text{ and } \mathcal{T}^{\: 0} \varphi = \varphi \\
B^{r+1}(W) & = B(B^r(W)) \text{ and } \mathcal{T}^{\: r+1} \varphi = \mathcal{T}(\mathcal{T}^{\: r} \varphi).
\end{align*}
The group $P$ acts on the right on each $B^r(W)$ and the map $\mathcal{T}^{\: r} \varphi : \hat{M} \rightarrow B^r(W)$ is $P$-equivariant.
\end{dfn}

\begin{rem}
In the special case where $W$ is a vector space, we have, after natural identifications, $\mathcal{T}^{\: r} \varphi = \mathcal{D}^r \varphi$ (see Definition \ref{dfn:covariant_derivative}).
\end{rem}

\begin{rem}
Note that $B(W) = (W\times \g)^* \otimes TW$. Since the tangent bundle of a smooth quasi-projective variety is itself smooth and quasi-projective, if we assume that $W$ is algebraic and the $P$-action algebraic, then for all $r \geq 1$, $B^r(W)$ will be algebraic and $P$ will act on it algebraically.
\end{rem}

\begin{rem}
If $f$ is a local automorphism of $(M,\mathcal{C},\varphi)$, we verify that for all $r \geq 1$, $\mathcal{T}^{\: r}\Phi \circ \hat{f} = \mathcal{T}^{\: r} \Phi$, where $\Phi = (K,\varphi)$ and $\hat{f}$ denotes the lift of $f$ ($K$ is the curvature map of $(M,\mathcal{C})$).
\end{rem}

\begin{dfn}[Killing generators]
Let $(M,\mathcal{C},\varphi)$ be a generalized Cartan geometry of type $(\X,W)$. Note $K : \hat{M} \rightarrow V = \Hom(\Lambda^2(\g/\p),\g)$ the curvature map of $(M,\mathcal{C})$ and $\Phi = (K,\varphi) : \hat{M} \rightarrow V \times W$. For $r \geq 1$ and $\hx \in \hat{M}$, we define the space of Killing generators of order $r$ at $\hx$ as
\begin{equation*}
\Kill^r_{\varphi}(\hx) = \omega_{\hx}\left (\Ker T_{\hx}( \mathcal{T}^{\: r-1} \Phi) \right ).
\end{equation*}
These spaces satisfy for all $p \in P$ the relation $\Kill^r_{\varphi}(\hx.p) = \Ad(p^{-1})\Kill^r_{\varphi}(\hx)$ and we define for $x \in M$, $k_r(x) = \dim \Kill^r_{\varphi}(\hx)$, for all $\hx \in \pi^{-1}(x)$.
\end{dfn}

We then have $k_{r+1}(x) \leq k_r(x)$ for all $x \in M$ and $r \geq 1$ and the map $\{x \mapsto k_r(x)\}$ is upper semi-continuous and $\Aut^{loc}_{\varphi}$-invariant for all $r \geq 1$. Let $\Int(M,\mathcal{C},\varphi)$ be the open dense $\Aut^{loc}_{\varphi}$-invariant subset 
\begin{equation*}
\Int(M,\mathcal{C},\varphi) = M^{int}_{\varphi} = \{x \in M \ | \ k_1, \ldots , k_{m+2} \text{ are locally constant at } x \}.
\end{equation*}

We then have an integrability result, similar to Theorem \ref{thm:frobenius}. 

\begin{thm}
\label{thm:frobenius_generalized}
If $x_0 \in M^{int}_{\varphi}$ and $A \in \Kill^{m+1}_{\varphi}(\hx_0)$ for $\hx_0 \in \pi^{-1}(x_0)$, then there exists a local Killing field at $x_0$ whose lift $\hat{A}$ is such that $\omega(\hat{A}(\hx_0))= A$.
\end{thm}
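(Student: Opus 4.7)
My plan is to transpose the proof of Theorem~\ref{thm:frobenius} from the parallelism setting to the generalized Cartan case, by working with the parallelism $\mathcal{P}_{\omega}$ on $\hat{M}$ together with the extra equivariant datum $\varphi$. Since $\Kill^r_{\varphi}(\hx)\subset\Kill^r(\hx)$, the Frobenius distribution $\Delta$ built in Section~\ref{sss:frobenius_parallelism} from the structure constants of $\mathcal{P}_{\omega}$ remains the natural ``graph equation'' for a local Killing field of the parallelism; the new point is to restrict its integral leaves to the locus where $\varphi$ is infinitesimally preserved, and to check involutivity there.

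Concretely, let $s_0\leq m+1$ be minimal with $k_{s_0}(x_0)=k_{s_0+1}(x_0)$, and shrink to a neighbourhood $\mathcal{U}$ of $\hx_0$ in $\hat{M}$ on which $k_1,\ldots,k_{m+2}$ are constant, which is legitimate since $x_0\in M^{int}_{\varphi}$. Define
\begin{equation*}
\Sigma=\{(\hx,A)\in\mathcal{U}\times\g \ :\ A\in\Kill^{s_0}_{\varphi}(\hx)\}
\end{equation*}
and let $\Delta$ be the distribution on $\mathcal{U}\times\g$ built exactly as in Section~\ref{sss:frobenius_parallelism} from the structure constants of $\mathcal{P}_{\omega}$. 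Three points must be checked. First, $\Sigma$ is a submanifold: I would derive this from a $\varphi$-enhanced analogue of Lemma~\ref{lem:melnick}, asserting that along $\gamma(t)=\phi^t_{\tilde{X}}(\hx)$ on which $k_{s_0}$ and $k_{s_0+1}$ are constant, one has $(\phi^t_{\tilde{X}})_*\Kill^{s_0}_{\varphi}(\hx)=\Kill^{s_0}_{\varphi}(\gamma(t))$. The appendix proof transcribes verbatim with $\Phi=(K,\varphi)$ replacing $K$ and $\mathcal{T}^{\: s-1}\Phi$ replacing $\mathcal{D}^{s-1}K$, since the argument only exploits equivariance under the parallelism flows. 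Second, $\Delta$ is tangent to $\Sigma$: this is the infinitesimal form of the same transport statement, yielding the analogue of Lemma~\ref{lem:delta_tangent}. Third, $\Delta$ is involutive along $\Sigma$: the computation of Lemma~\ref{lem:delta_involutive} uses only the structure constants and the fact that the fibre variable annihilates the curvature $K$, and the inclusion $\Kill^{s_0}_{\varphi}\subset\Kill^{s_0}$ suffices for the obstructing term to vanish.

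The Frobenius theorem then produces an integral leaf through $(\hx_0,A)$ which projects locally diffeomorphically onto $\mathcal{U}$, and is therefore the graph of a vector field $\hat{A}$ near $\hx_0$ with $\omega(\hat{A}(\hx_0))=A$ and $\mathcal{L}_{\hat{A}}\omega=0$. Since this graph lies inside $\Sigma$, at every point $\hat{A}(\hx)\in\Kill^1_{\varphi}(\hx)$, so by definition of $\Kill^1_{\varphi}$ one has $T_{\hx}\varphi(\hat{A}(\hx))=0$, and hence $\varphi$ is preserved by the local flow of $\hat{A}$. Extending $\hat{A}$ to the saturated neighbourhood $\mathcal{U}.P$ by the mechanism of Proposition~\ref{prop:extension}, which remains valid since the $P$-equivariance of $\varphi$ ensures the extension continues to annihilate $\varphi$, produces the lift of the desired local Killing field of $(M,\mathcal{C},\varphi)$ at $x_0$. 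The main obstacle is therefore the $\varphi$-enhanced Melnick-type transport lemma used in the first step; I expect it to follow from the appendix argument by a routine notational adaptation, but this is the one place where a genuine verification is needed beyond what already appears in the paper.
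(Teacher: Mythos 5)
Your proposal follows essentially the same route as the paper: reduce to the parallelism $\mathcal{P}_{\omega}$ on $\hat{M}$, replace $\Sigma$ by its $\varphi$-refined version $\Sigma_{\varphi}$ cut out by $\Kill^{s}_{\varphi}$, keep the same distribution $\Delta$, and observe that involutivity and the graph construction go through because $\Kill^{s}_{\varphi}\subset\Kill^{1}$, with the flow-invariance of $\varphi$ read off from $\hat{A}(\hx)\in\Kill^{1}_{\varphi}(\hx)$. The one step you flag as needing verification, the $\varphi$-enhanced transport lemma, is exactly the paper's Lemma~\ref{lem:nomizu_generalized}; the only detail you gloss over is that since $\varphi$ takes values in a manifold $W$ rather than a vector space, one first passes to a chart of $W$ near $\varphi(\hx_0)$ (checking $\Kill^r_{\varphi}=\Kill^r_{\bar{\varphi}}$ locally) before the appendix argument applies verbatim to $\Phi=(K,\varphi)$.
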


As we will see, the proof reuses essentially the same arguments.

\subsubsection{Sketch of proof}

\paragraph*{Proof of Theorem \ref{thm:frobenius_generalized}}

It is enough to find a field $\hat{A}$ defined on a convex exponential neighbourhood of $\hx_0$ that is a local Killing field of $(\hat{M},\mathcal{P}_{\omega})$ and whose local flow preserves $\varphi$. We then treat the case of an absolute parallelism.

Assume $(M,\mathcal{C})$ is the Cartan geometry associated to a parallelism $(M,\mathcal{P})$. Then, $\hat{M}=M$ and $\varphi$ is a smooth map $M \rightarrow W$. Take $x_0 \in M^{int}_{\varphi}$ and $A \in \Kill^{n+1}_{\varphi} (x_0)$. As in Section \ref{sss:frobenius_parallelism}, we can assume $M=U$ is an open subset of $\R^n$ on which the functions $k_1, \ldots, k_{n+2}$ are constant. We work in the trivialisation of $TM$ given by $\mathcal{P}$ and if $f$ is a local diffeomorphism of $M$, we note $f_* : M \times \R^n \rightarrow M \times \R^n$ its differential action read in the trivialisation given by the Cartan connection $\omega$. There exists $1 \leq s \leq n+1$ such that $k_s = k_{s+1}$ on $M$. 

\begin{lem}
\label{lem:nomizu_generalized}
Let $x_0 \in M$. Assume, for some $s \geq 1$, the maps $k_s$ and $k_{s+1}$ locally constant and equal in a neighbourhood of $x_0$. Then, for $X \in \g$ small enough and $t \in [-1,1]$, we have
\begin{equation*}
\left (\phi_{\tilde{X}}^t\right )_* \Kill^s_{\varphi}(x_0) = \Kill^s_{\varphi}(\phi_{\tilde{X}}^t(x_0)).
\end{equation*}
\end{lem}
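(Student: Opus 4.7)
The plan is to adapt the proof of Lemma \ref{lem:melnick} (Appendix \ref{appendix}) to the generalized setting, systematically replacing the jet $\mathcal{D}^{r} K$ by the intrinsic jet $\mathcal{T}^{r}\Phi$ attached to $\Phi = (K,\varphi)$. Let $F = \mathcal{T}^{s-1}\Phi$, so that $\Kill^s_{\varphi}(y) = \omega_y(\Ker T_y F)$, and define $B_t \in \g$ by $\omega_{\gamma(t)}^{-1}(B_t) = T_{x_0}\phi^t_{\tilde{X}}(\omega_{x_0}^{-1}(A))$. Since the flow is a diffeomorphism, $\dim (\phi^t_{\tilde{X}})_*\Kill^s_{\varphi}(x_0) = k_s(x_0) = k_s(\gamma(t))$, so it suffices to show the containment $B_t \in \Kill^s_{\varphi}(\gamma(t))$ for all small $t$. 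The hypothesis $k_s = k_{s+1}$ near $x_0$ upgrades this to $\Kill^s_{\varphi} = \Kill^{s+1}_{\varphi}$ throughout a neighbourhood of $\gamma$, so $A$ is already a Killing generator of order $s+1$ at $x_0$.

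I would introduce $U(t) := T_{x_0}(F \circ \phi^t_{\tilde{X}}) \cdot \omega^{-1}_{x_0}(A)$, which by the chain rule equals $T_{\gamma(t)} F \cdot \omega^{-1}_{\gamma(t)}(B_t)$; thus $B_t \in \Kill^s_{\varphi}(\gamma(t))$ is equivalent to $U(t) = 0$. Differentiating in $t$ yields $\dot{U}(t) = T_{\gamma(t)}(\tilde{X}\cdot F) \cdot \omega^{-1}_{\gamma(t)}(B_t)$, where $\tilde{X}\cdot F$ is, after working in a local chart around $F(x_0)$, a component of $\mathcal{T}^{s}\Phi$ (this chart argument deals once and for all with the fact that $B^{s-1}(V\times W)$ is a manifold, not a vector space, and reduces the computation to the vector-valued one of Appendix \ref{appendix}). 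Consequently $\dot{U}(t) = 0$ whenever $B_t \in \Kill^{s+1}_{\varphi}(\gamma(t))$.

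The proof is then closed by a connectedness argument on the set $S := \{t : B_t \in \Kill^{s+1}_{\varphi}(\gamma(t))\}$. One has $0 \in S$, and $S$ is closed by continuity. To show $S$ is open, one uses the smoothness of the subspace bundle $\Kill^{s+1}_{\varphi}(\gamma(t))$ (forced by local constancy of $k_{s+1}$) and the equality $\Kill^{s+1}_{\varphi} = \Kill^s_{\varphi}$ along $\gamma$: at any $t_0 \in S$, $U(t_0) = 0$ and $\dot{U}$ vanishes on a neighbourhood of $t_0$, so $U$ remains zero there, i.e.\ $B_t \in \Kill^s_{\varphi} = \Kill^{s+1}_{\varphi}$ on that neighbourhood. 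The main obstacle is exactly this interlocked propagation: the vanishing of $\dot{U}$ requires $B_t \in \Kill^{s+1}_{\varphi}$, while membership in $\Kill^{s+1}_{\varphi}(\gamma(t))$ (equivalently, in $\Kill^s_{\varphi}(\gamma(t))$) is what $U(t) = 0$ guarantees; the synchronisation of the two relies on $k_s = k_{s+1}$ and is a standard bootstrap. All the differential-geometric bookkeeping is identical to Appendix \ref{appendix}, because the properties of $\mathcal{T}^{\: r}\Phi$ used in the argument are formal and insensitive to whether $W$ is a vector space.
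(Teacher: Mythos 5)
Your first step (reduction to a vector--space target via a chart around $\varphi(x_0)$ and local trivialisations of $B^{s-1}(V\times W)$) and your differential identity $\dot U(t)=T_{\gamma(t)}(\tilde X.F)\cdot\omega_{\gamma(t)}^{-1}(B_t)$ both agree with the paper, which reduces to the case where $W$ is a vector space and then reruns the Appendix proof of Lemma \ref{lem:melnick} with $K$ replaced by $\Phi=(K,\varphi)$. The gap is in your conclusion. What you have actually established is the pointwise implication ``$U(t)=0\Rightarrow\dot U(t)=0$'': indeed $U(t)=0$ means $B_t\in\Kill^s_{\varphi}(\gamma(t))=\Kill^{s+1}_{\varphi}(\gamma(t))$, and membership in $\Kill^{s+1}_{\varphi}$ kills $\dot U(t)$. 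This implication together with $U(0)=0$ does \emph{not} force $U\equiv0$: the scalar function $U(t)=t^2$ satisfies both. Your openness argument for $S$ is circular --- to know that $\dot U$ vanishes on a neighbourhood of $t_0$ you need $B_t\in\Kill^{s+1}_{\varphi}(\gamma(t))$ on that whole neighbourhood, which is precisely the statement $U=0$ there that you are trying to propagate. Calling this a ``standard bootstrap'' hides the actual content of the lemma; as written, $S$ could be a single point.

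The missing idea is the quantitative step carried out in the Appendix. One must upgrade the set-theoretic equality $\Kill^s_{\varphi}=\Kill^{s+1}_{\varphi}$ along $\gamma$ to a linear-algebraic one: because the family of linear forms cutting out $\Kill^{s}_{\varphi}(\gamma(t))$ has constant rank (this is where local constancy of $k_s=k_{s+1}$ enters), the constant rank theorem lets one write the order-$(s+1)$ forms as combinations of the order-$\le s$ forms with coefficients depending smoothly on $t$. Substituting this into the expression for $\dot U(t)$ --- which a priori involves the order-$(s+1)$ jet applied to $B_t$ --- closes the system into a genuine linear ODE $\dot Y(t)=M(t)Y(t)$ in the components of $U$, with initial condition $Y(0)=0$; uniqueness for linear ODEs then yields $U\equiv0$. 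Without this conversion from ``$B_t$ annihilates the higher forms whenever it annihilates the lower ones'' into an honest differential inequality, the argument does not close.
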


\begin{proof}
This result is local. If we fix a chart $\psi$ of $W$ in a neighbourhood of $\varphi(x_0)$ taking values in $\R^{\dim W}$ and if we set $\bar{\varphi} = \psi \circ \varphi$ in a small enough neighbourhood of $x_0$, then we can prove, using local trivialisations of $B^r(W)$, that $\Kill^r_{\varphi} = \Kill^r_{\bar{\varphi}}$ on this neighbourhood. This shows that it is enough to prove the lemma in the special case where $W$ is a vector space.

In this situation, $\Phi =(K,\varphi)$ takes values in a vector space. Up to natural identifications, the maps $\mathcal{T}^{\: r} \Phi$ correspond to the covariant derivatives $\mathcal{D}^r \Phi$. One can then verify that the proof of Lemma \ref{lem:melnick} can directly be adapted when we replace the curvature map $K$ by $\Phi$ and the spaces $\Kill^r(x)$ by $\Kill^r_{\varphi}(x)$.
\end{proof}

Define 
\begin{equation*}
\Sigma_{\varphi} = \{(x,u) \in M \times \R^n \ | \ u \in \Kill^{s}_{\varphi}(x)\}
\end{equation*}
and let $\Delta$ be the distribution on $M \times \R^n$ defined in Lemma \ref{lem:killing_field} :
\begin{equation*}
\Delta_{(x,u)} = \Vect(X_i - \sum_{jk} u_j\gamma_{ij}^k \, \partial_k \, , \: 1\leq i \leq n).
\end{equation*}
Lemma \ref{lem:nomizu_generalized} gives us that $\Sigma_{\varphi}$ is a submanifold of $M \times \R^n$ and that $\Delta$ is everywhere tangent to $\Sigma_{\varphi}$ (the proof is exactly the same than Lemmas \ref{lem:sigma_manifold} and \ref{lem:delta_tangent}). The same calculation than in Lemma \ref{lem:delta_involutive} gives that $\Delta|_{T\Sigma_{\varphi}}$ is involutive since for every $(x,u) \in \Sigma_{\varphi}$, $u \in \Kill^1(x)$ is a Killing generator of order $1$ of $(M,\mathcal{P})$.

The distribution $\Delta$ can be integrated into a foliation of $\Sigma_{\varphi}$. Let $\mathcal{F}_0$ be the leaf in $\Sigma_{\varphi}$ that contains $(x_0,A)$. Note $p : \Sigma \rightarrow M$ the projection on the first factor. The map $p|_{\mathcal{F}_0}$ is a local diffeomorphism from $\mathcal{F}_0$ to $M$ by the inverse mapping theorem. In a small enough neighbourhood of $x_0$, set $\hat{A}(x)=p|_{\mathcal{F}_0}^{-1}(x)$. Then, $\hat{A}$ satisfies :

\begin{enumerate}
\item The graph of $\hat{A}$ integrates the distribution $\Delta$, \textit{ie} $\hat{A}$ is a local Killing field of $(M,\mathcal{P})$, according to Lemma \ref{lem:killing_field}.
\item For every $x$ where $\hat{A}$ is defined, $T_x \varphi (\hat{A}(x)) = 0$, since $(x,\hat{A}(x)) \in \Sigma_{\varphi} \Rightarrow \hat{A}(x) \in \Kill^{1}_{\varphi}(x)$.
\end{enumerate}
The second point implies that $\varphi(\phi^t_{\hat{A}}(x))$ does not depend on $t$ and $\varphi \circ \phi^t_{\hat{A}} = \varphi$ for small $t$ and in a neighbourhood of $x_0$. Then, $\hat{A} \in \Kill^{loc}_{\varphi}(x_0)$ and the theorem is proved.

This last theorem ensures that for all $\hx \in \pi^{-1}(M^{int}_{\varphi})$,
\begin{equation*}
\Kill^{m+1}_{\varphi}(\hx) = \omega_{\hx}\left (\Kill^{loc}_{\varphi}(x)|_{\hx}\right ) := \omega_{\hx}\left (\{\hat{X}(\hx), \ X \in \Kill^{loc}_{\varphi}(x)\}\right ).
\end{equation*}

\paragraph*{Proof of Theorem \ref{thm:gromov_generalized}}
Let $\Phi = (K,\varphi)$ and $\Psi = \mathcal{T}^{\: m} \Phi : \hat{M} \rightarrow B^m(V \times W)$, where $V = \Hom(\Lambda^2(\g / \p),\g)$ denotes the target space of the curvature map $K$. By assumption, $P$ acts algebraically on $V \times W$, and then it also acts algebraically on $B^m(V \times W)$. This map $\Psi$ has locally constant rank over $\pi^{-1}(M^{int}_{\varphi})$ since $\Kill^{m+1}_{\varphi}(\hx) = \omega_{\hx}\left ( \Ker T_{\hx}(\mathcal{T}^{\: m} \Phi) \right )$ and $k_{m+1}$ is locally constant over $M^{int}_{\varphi}$. Define $d : M \rightarrow \N$ by $d(x) = \dim P.\Psi(\hx)$ for all $\hx$ lying over $x$. The map $d$ is upper semi-continuous and $\Aut^{loc}_{\varphi}$-invariant. Consequently, there exists an $\Aut^{loc}_{\varphi}$-invariant open dense subset $\Omega \subset M^{int}_{\varphi}$ on which $d$ and $k_{m+1}$ are locally constant. This set splits into a minimal decomposition $\Omega = \Omega_1 \cup \cdots \cup \Omega_k$ such that every $\Omega_i$ is an open $\Aut^{loc}_{\varphi}$-invariant set on which $k_{m+1}$ and $d$ are constant equal to $k_{m+1}^i$ and $d_i$. Then, for $\hx \in \hat{\Omega}_i$, if $w = \Psi(\hx)$, $\Psi^{-1}(P.w)$ is a closed submanifold of $\hat{\Omega}_i$ (according to Proposition \ref{prop:algebraic_orbit}), saturated by the $P$-action. Then, $\pi(\Psi^{-1}(P.w))$ is a closed submanifold of $\Omega_i$. By construction, we have 
\begin{equation*}
\forall \hx \in \Psi^{-1}(P.w), \ T_{\hx}( \Psi^{-1}(P.w)) = \{\hat{X}(\hx), \ X \in \Kill^{loc}_{\varphi}(x)\}.
\end{equation*}
Then, we can apply the same reasoning as in Section \ref{sss:generalcase_gromov} and conclude that the $\Aut^{loc}$-orbits in $\Omega_i$ are union of connected components of $\pi(\Phi^{-1}(P.w))$, and then are closed submanifolds which all have the same dimension.

\subsection{Another approach of Singer's theorem}

We finish this article by proving that we can view Singer's generalized theorem as a corollary of Theorem \ref{thm:isloc_orbit}, if we assume infinitesimal homogeneity with a higher degree.

Let $(M,\mathcal{C})$ a Cartan geometry, modeled on $\X = G/P$, and $m=\dim G$. Assume that $M$ is connected and $(M,\mathcal{C})$ is $(m+1)$-infinitesimally homogeneous, that is $\mathcal{D}^{m+1} K : \hat{M} \rightarrow \mathcal{W}^{m+1}$ takes values in a single orbit. Then, for all $r \leq m$, $\mathcal{D}^r K$ takes also values in a single $P$-orbit in $\mathcal{W}^r$. Therefore, all the maps $K, \mathcal{D}^{1}K, \ldots ,\mathcal{D}^{m+1}K$ have constant rank on $\hat{M}$, or equivalently $M = M^{int}$. Since $\mathcal{D}^{m+1}K$ is $P$-equivariant, for all $\hx \in \hat{M}$, $\Ker (T_{\hx} \mathcal{D}^{m+1}K) + T_{\hx} (\hx .P) = T_{\hx}\hat{M}$, implying $\pi_* \Ker (T_{\hx} \mathcal{D}^{m+1}K) = T_x M$. Then, Theorem \ref{thm:frobenius} gives us $T_xM = \{X(x), \ X \in \Kill^{loc}(x)\}$ and $(M,\mathcal{C})$ is locally homogeneous.

\section{Appendix : Technical proof}
\label{appendix}
We give here a detailed proof of Lemma \ref{lem:melnick}. Recall its statement

\begin{lem*}[\ref{lem:melnick}]
Let $x \in \Int(M,\mathcal{P})$. Let $X \in \R^n$ in the injectivity domain of $\exp_x$, $A \in \Kill^s(x)$ and $\gamma(t) = \exp_x(tX)=\phi_{\tilde{X}}^t(x)$, $t \in [-1,1]$. Assume that for some $s \geq 1$, the maps $k_s$ and $k_{s+1}$ are constant and equal in a neighbourhood of  $\gamma$. Then, we have for all $t$
\begin{equation*}
\left ( \phi_{\tilde{X}}^t \right )_* \! A \in \Kill^s (\gamma(t)).
\end{equation*}
\end{lem*}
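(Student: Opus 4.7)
The plan is to reduce the statement to a uniqueness result for a homogeneous linear ODE along $\gamma$, whose trivial initial data is precisely the assumption $A \in \Kill^s(x)$.

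First I would compute $\dot A(t)$ for the curve $A(t) := \omega_{\gamma(t)}\bigl((\phi_{\tilde X}^t)_* \tilde A_x\bigr) \in \R^n$. Since $\iota_{\tilde X}\omega \equiv X$ is constant, the Cartan magic formula gives $\L_{\tilde X}\omega = \iota_{\tilde X} d\omega$; in the abelian setting, $d\omega$ coincides with the curvature $2$-form, and one readily obtains $\dot A(t) = K(\gamma(t))(X, A(t))$. Next I would introduce the auxiliary curves
\begin{equation*}
\phi_r(t) \;:=\; D^r K(\gamma(t)) \lefthalfcup A(t), \qquad r = 1, \ldots, s+1,
\end{equation*}
and observe that the hypothesis $A \in \Kill^s(x)$ is exactly $\phi_1(0) = \cdots = \phi_s(0) = 0$. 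Differentiating $\phi_r$ in $t$ and substituting the equation for $\dot A$ produces two contributions: a derivative term $D^{r+1}K(\gamma(t))(X, A(t), \cdot)$ and a transport term $D^r K(\gamma(t))(K(\gamma(t))(X, A(t)), \cdot)$. The plan is to swap $X$ and $A(t)$ in the first term using $\tilde X \tilde A - \tilde A \tilde X = [\tilde X, \tilde A]$ applied to $\tilde Y_1 \cdots \tilde Y_{r-1}.K$; the abelian identity $\omega_y([\tilde X,\tilde A]) = -K(y)(X,A)$ makes the resulting commutator term cancel exactly against the transport contribution, leaving the clean recursion
\begin{equation*}
\dot \phi_r(t) \;=\; \phi_{r+1}(t) \lefthalfcup X, \qquad 1 \leq r \leq s.
\end{equation*}

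This system is not closed at $r = s$ because $\phi_{s+1}$ appears on the right. To close it, I would invoke the hypothesis $k_s = k_{s+1}$ on a neighbourhood of $\gamma$: it says $\Kill^s(y) = \Kill^{s+1}(y)$ for $y$ near $\gamma$, which means the linear map $D^{s+1}K(y) \lefthalfcup \cdot$ factors through $\bigl(D^1 K(y) \lefthalfcup \cdot, \ldots, D^s K(y) \lefthalfcup \cdot\bigr)$. The constancy of $k_s$ makes $\Kill^s$ a smooth subbundle of constant rank along $\gamma$, which allows me to select a smooth family of factorizations $M(t)$ with $\phi_{s+1}(t) = M(t)\bigl(\phi_1(t), \ldots, \phi_s(t)\bigr)$. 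Substituting back closes the system into a linear homogeneous ODE for $(\phi_1, \ldots, \phi_s)$ with zero initial data, and standard uniqueness gives $\phi_r \equiv 0$, i.e.\ $A(t) \in \Kill^s(\gamma(t))$ for every $t$.

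The principal technical hurdle is the cancellation that produces $\dot\phi_r = \phi_{r+1} \lefthalfcup X$: it demands careful use of the swap formula and the abelian curvature identity, and without it one cannot even write down a recursion in terms of $\phi_r$ alone. A secondary subtlety lies in producing the factorization $M(t)$ smoothly in $t$; this is precisely why the hypothesis asks for both $k_s$ constant and $k_s = k_{s+1}$, rather than the second condition alone.
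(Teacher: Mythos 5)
Your proposal is correct and follows essentially the same route as the paper's own proof in the Appendix: both derive the first-order recursion $\dot\phi_r(t) = \phi_{r+1}(t)\lefthalfcup X$ along $\gamma$, close the system at order $s$ by using $k_s = k_{s+1}$ constant together with the constant-rank theorem to express $\phi_{s+1}$ as a smooth linear combination of $\phi_1,\ldots,\phi_s$, and conclude by uniqueness for a homogeneous linear ODE with vanishing initial data. The only difference is in the bookkeeping for the recursion: the paper commutes $\tilde{X}$ with an auxiliary field $\chech{A}$ along $\gamma$, while you split the derivative into a transport term (via $\dot A = K(X,A)$) and a position term and cancel the bracket contribution against the transport contribution --- the same computation presented differently.
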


By assumption, $\Kill^{s}(\gamma(t))=\Kill^{s+1}(\gamma(t))$ and these spaces have constant dimension. Set $A_t = (\phi_{\tilde{X}}^t)_* A$. Let $\mathcal{U} \subset \g$ be a neighbourhood of $0$ on which $\exp_x$ realizes a diffeomorphism onto its image. We define a vector field $\chech{A}$ on $\exp(x,\mathcal{U})$ setting $\chech{A}(\exp(x,Y)) = (\phi_{\tilde{Y}}^1)_* A$. Note that for all $f \in \mathcal{C}^{\infty}(M)$ and $t$ we have
\begin{equation*}
(\tilde{A_t}.f)(\gamma(t)) = (\chech{A}.f)(\gamma(t)).
\end{equation*}
Our aim is to prove that for $1 \leq r \leq s$ the map $t \mapsto D^rK(\gamma(t))\lefthalfcup A_t$ identically vanishes. We first compute its derivatives :
\begin{align*}
\left . \frac{\d}{\d t} \right |_{t=t_0} D^rK(\gamma(t))\lefthalfcup A_t 
& = \left . \frac{\d}{\d t} \right |_{t=t_0} (\tilde{A_t}. D^{r-1}K)(\gamma(t))
& = \left . \frac{\d}{\d t} \right |_{t=t_0} (\chech{A}. D^{r-1}K)(\phi_{\tilde{X}}^t(x)) \\
& = (\tilde{X}.\chech{A}.D^{r-1}K)(\gamma(t_0)) & \\
& = (\chech{A}.\tilde{X}.D^{r-1}K)(\gamma(t_0)) & 
\end{align*}
since $[\chech{A},\tilde{X}](\gamma(t)) = 0 \Rightarrow (\L_{\tilde{X}}\L_{\chech{A}}f)(\gamma(t))=(\L_{\chech{A}}\L_{\tilde{X}}f)(\gamma(t))$ for all $f \in \mathcal{C}^{\infty}(M)$.
Then, we have
\begin{align*}
\frac{\d}{\d t} D^rK(\gamma(t))\lefthalfcup A_t & = (\chech{A}.\tilde{X}.D^{r-1}K)(\gamma(t)) \\
                                                & = (\tilde{A_t}.\tilde{X}.D^{r-1}K)(\gamma(t)) \\
                                                & = (D^{r+1}K(\gamma(t)) \lefthalfcup A_t)\lefthalfcup X.
\end{align*}
For $r \geq 1$, let $\mathcal{C}^r_{x}  : \g \rightarrow \mathcal{W}^{r-1}$ be the linear map $\{X \mapsto \mathcal{D}^r K(x) \lefthalfcup X\}$. We finally get
\begin{equation}
\label{diff:equation}
\frac{\d}{\d t} \mathcal{C}^r_{\gamma(t)}(A_t) = \mathcal{C}^{r+1}_{\gamma(t)}(A_t)\lefthalfcup X.
\end{equation}
Let $n_s = \dim \Hom(\otimes^s \R^n, V)$ and $(w_{s,1},\ldots,w_{s,n_s})$ be a basis of $\Hom(\otimes^s \R^n, V)$, for $s \geq 0$. There exists linear forms $f^{i,j}_{x} \in (\R^n)^*$, $i \geq 0$, $1 \leq j \leq n_i$ such that for $r \geq 1$, 
\begin{equation*}
\mathcal{C}	^r_{x} = \sum_{\substack{ 0 \leq i \leq r-1 \\1 \leq j \leq n_i}} f^{i,j}_{x}w_{i,j}.
\end{equation*}
Since $\Kill^r = \Ker \mathcal{C}^r$ for all $r$ and $\Kill^{s}$ has constant dimension along $\gamma$, the rank of the family $(f^{ij}_{\gamma(t)}$, $0\leq i \leq s-1$, $1 \leq j \leq n_i) \subset (\R^n)^*$, does not depend on $t$. At last, $\Kill^{s}(\gamma(t)) = \Kill^{s+1}(\gamma(t))$ implies that for all $0 \leq j \leq n_{s}$,
\begin{equation*}
\Ker f^{s,j}_{\gamma(t)} \supset \bigcap_{\substack{ 0 \leq i \leq s-1 \\ 1 \leq j \leq n_i}} \Ker f^{ij}_{\gamma(t)}.
\end{equation*}

Consequently, the maps $f^{s,j}_{\gamma(t)}$ are linear combinations with $\mathcal{C}^{\infty}$ coefficients of the  $f^{i,j}_{\gamma(t)}$, $i \leq s-1$, $1 \leq j \leq n_i$ (this is a consequence of the constant rank theorem). Therefore, the equations (\ref{diff:equation}) for $1 \leq r \leq s$ give us a system of differential linear equations of order $1$ involving the $f^{i,j}_{\gamma(t)}(A_t)$, $0 \leq i \leq s-1$, $1 \leq j \leq n_i$. Since $A \in \Kill^{s}(x)$, the initial condition of this system is $0$, implying $A_t \in \Kill^{s}(\gamma(t))$ for all $t$.

\bibliographystyle{amsalpha}
\bibliography{reference.bib}
\nocite{*}

\end{document}